%
%
%
%
%

%
\RequirePackage{fix-cm}
\documentclass[smallextended]{svjour3}       
\smartqed  
\usepackage{graphicx}
\usepackage{pdfsync}

%
%
\usepackage[hidelinks]{hyperref}
\usepackage{latexsym,amsfonts,amsmath,graphics,multirow}
\usepackage{mathrsfs}

\usepackage[normalem]{ulem}
\usepackage[usenames,dvipsnames,svgnames,table]{xcolor}

\usepackage{epsfig,tikz,pgfplots}
\usetikzlibrary{positioning}
\usepackage{enumitem,amssymb}
\usepackage{algorithm,algorithmic}
\usepackage{float}
\usepackage{makecell}
\usepackage[font=scriptsize]{caption}

\spnewtheorem{theorem}{Theorem}[section]{\bfseries}{\itshape}
\spnewtheorem{lemma}[theorem]{Lemma}{\bfseries}{\itshape}
\spnewtheorem{corollary}[theorem]{Corollary}{\bfseries}{\itshape}
\spnewtheorem{definition}[theorem]{Definition}{\bfseries}{\itshape}
\spnewtheorem{assumption}[theorem]{Assumption}{\bfseries}{\itshape}
\spnewtheorem{example}[theorem]{Example}{\itshape}{\itshape}
\spnewtheorem{remark}[theorem]{Remark}{\itshape}{\itshape}
\numberwithin{equation}{section}
\numberwithin{theorem}{section}



\newcommand{\mask}[1]{}

%


%

\definecolor{darkred}{RGB}{139,0,0}
\definecolor{darkgreen}{RGB}{0,100,0}
\definecolor{darkmagenta}{RGB}{139,0,139}
\definecolor{orange}{RGB}{207,83,0}
\definecolor{brown}{RGB}{139,69,19}

\newcommand{\vk}[1]{{\color{black}{#1}}}

\newcommand{\bstheta}{{\boldsymbol{\theta}}}

\newcommand{\bsgamma}{{\boldsymbol{\gamma}}}
\newcommand{\bsalpha}{{\boldsymbol{\alpha}}}
\newcommand{\bseta}{{\boldsymbol{\eta}}}
\newcommand{\bsnu}{{\boldsymbol{\nu}}}

\newcommand{\bsb}{{\boldsymbol{b}}}

\newcommand{\bsm}{{\boldsymbol{m}}}

\newcommand{\bst}{{\boldsymbol{t}}}
\newcommand{\bsx}{{\boldsymbol{x}}}
\newcommand{\bsy}{{\boldsymbol{y}}}
\newcommand{\bsz}{{\boldsymbol{z}}}

\newcommand{\bslambda}{{\boldsymbol{\lambda}}}

\newcommand{\setu}{\mathrm{\mathfrak{u}}}
\newcommand{\setv}{\mathrm{\mathfrak{v}}}

\newcommand{\bsxi}{{\boldsymbol{\xi}}}

\definecolor{cs}{rgb}{0,0,0}
\newcommand{\cs}[1]{{\color{cs}{#1}}}
\usepackage{subfig,microtype}
\journalname{Numerische Mathematik}
\newcommand{\rev}[1]{{{#1}}}
\allowdisplaybreaks
\begin{document}

\title{Quasi-Monte Carlo for Bayesian design of experiment problems governed by parametric PDEs
}

\titlerunning{Quasi-Monte Carlo for Bayesian design of experiment problems}

\author{
Vesa Kaarnioja\and Claudia Schillings}

\authorrunning{V. Kaarnioja and C. Schillings}

\institute{
              \rev{Vesa Kaarnioja}\at
              \rev{School of Engineering Sciences, LUT University, P.O.~Box 20, 53851 Lappeenranta, Finland\\
			  E-mail:
             {\tt vesa.kaarnioja@lut.fi}}\\
             Claudia Schillings\at
              Fachbereich Mathematik und Informatik, Freie Universit\"at Berlin, Arnimallee~6, 14195 Berlin, Germany\\
			  E-mail:
             {\tt c.schillings@fu-berlin.de}
}

\date{Received: date / Accepted: date}

\maketitle

\begin{abstract}
\begin{sloppypar}
\vk{This paper contributes to the study of optimal experimental design for Bayesian inverse problems governed by partial differential equations (PDEs). We derive estimates for the parametric regularity of multivariate double integration problems over high-dimensional parameter and data domains arising in Bayesian optimal design problems. We provide a detailed analysis for these double integration problems using two approaches: a full tensor product and a sparse tensor product combination of quasi-Monte Carlo (QMC) cubature rules over the parameter and data domains. Specifically, we show that the latter approach significantly improves the convergence rate, exhibiting performance comparable to that of QMC integration of a single high-dimensional integral. Furthermore, we numerically verify the predicted convergence rates for an elliptic PDE problem with an unknown diffusion coefficient in two spatial dimensions, offering empirical evidence supporting the theoretical results and highlighting practical applicability.}
\end{sloppypar}
\keywords{Bayesian optimal experimental design\and quasi-Monte Carlo methods\and sparse grids}
\subclass{65D30\and 65D32\and 65D40\and 62K05\and 62F15\and 65N21}
\end{abstract}

\section{Introduction}
Optimal experimental design involves designing a measurement configuration, e.g., optimal placement of sensors to collect observational data, which maximizes the information gained from the experiments~\cite{atkinson2007optimum,watson1987foundations,Ucinski}. By carefully designing the experiments, optimal experimental design aims to enhance the precision and efficiency of the data collection process. This methodology plays a pivotal role in various fields,  e.g., in engineering, but also social sciences and environmental studies. 

We will focus on optimal experimental design 
for Bayesian inverse problems governed by partial differential equations (PDEs) with high or infinite-dimensional parameters~\cite{alexand,10.1214/ss/1177009939}. The Bayesian approach incorporates prior knowledge and beliefs into the design process. Bayesian optimal design aims to maximize the information gained from the data while minimizing resources and costs. This approach is particularly useful in situations where the sample size is limited or when there are complex relationships between parameters. We consider as a criterion for the information gain the Kullback--Leibler divergence between the prior and posterior distribution (the solution of the underlying Bayesian inverse problem) and maximize the expected information gain, i.e., the average information gain with respect to all possible data realizations.

From a computational point of view, the Bayesian optimal design is challenging as it involves the computation or approximation of the  expected utility, in our case the expected information gain. These challenges are primarily rooted in the high dimensionality of the parameters involved in the inversion process, the substantial computational cost associated with simulating the underlying model, and the inaccessibility of the joint parameters and data distribution. By exploiting the problem structure of the forward problem, we will address these computational challenges and propose a quasi-Monte Carlo (QMC) method suitable for the infinite-dimensional setting.

\subsection{Literature overview}

The Bayesian approach to inverse problems governed by PDEs has become very popular over the last years. Mathematical modeling of physical phenomena described by PDEs often involves a high or even infinite number of parameters, which need to be estimated from the data. The Bayesian framework provides a systematic approach for quantifying uncertainties and updating model parameters using observed data. We refer to~\cite{stuart_2010} for an overview on the mathematical foundation and computational methods in this context. As the data collection process is often very expensive, one is naturally interested in optimizing this process, i.e., finding a setup such that the information about the unknown parameters is maximized. Optimal experimental design is a crucial concept in the field of statistics and applied mathematics involving strategically planning experiments to extract the maximum amount of information with the fewest resources, see, e.g.,~\cite{atkinson2007optimum,Ucinski,watson1987foundations} for a general introduction and overview and~\cite{alexand,10.1214/ss/1177009939} for the Bayesian approach to optimal design. The development of fast computational algorithms for the solution of the Bayesian optimal design problem for models described by PDEs is crucial to ensure the feasibility for applications. However, the tools available for Bayesian optimal experimental design  in the case of models governed by PDEs  are typically limited to specific scenarios and often lack comprehensive convergence analysis, cf.~\cite{alexand}. Huan et al. \cite{huan2013simulation} present an alternative simulation-based approach tailored for optimal Bayesian experimental design within the realm of nonlinear systems. Their methodology employs a double-loop Monte Carlo technique, polynomial chaos approximation of the parameter-to-observation map, and simultaneous stochastic approximation.

In the context of linear problems, some progress has been made from both theoretical and numerical perspectives. Alexanderian et al.~\cite{AlexanderianEtAl2014} address A-optimal design of experiments for infinite-dimensional Bayesian linear inverse problems. Their work incorporates techniques such as low-rank approximation of the parameter-to-observable map and a randomized trace estimator for efficient objective function evaluation. Achieving sparsity in sensor configuration is facilitated through the utilization of penalty functions. Existing methods often rely on Laplace approximations of distributions.

Beck et al.~\cite{BeckEtAl2018} propose an efficient Bayesian experimental design approach that utilizes Laplace-based importance sampling to compute the expected information gain. They explore the effectiveness of the double-loop Monte Carlo method, with a specific focus on Laplace-based techniques. Despite the convergence of Laplace approximation to the posterior under suitable assumptions, the convergence analysis of Laplace-based Bayesian optimal experimental design, i.e., its incorporation as an approximation of the posterior rather than a preconditioner, necessitates non-asymptotic bounds that are currently unavailable.
In \cite{DBLP:conf/icml/RainforthCYW18}, a nested Monte Carlo strategy has been suggested for Bayesian experimental design, which, under regularity assumptions on the forward problem, can recover the original Monte Carlo rate. For QMC, a similar approach has been proposed in \cite{bartuska2024doubleloop} achieving rates up to $2/3$ in terms of the number of forward function evaluations with constants depending on the parameter and observation space dimension.

In a large-scale Bayesian optimal experimental design approach~\cite{WuEtAl2022}, a derivative-informed projected neural network is employed. The parameter-to-observation map is approximated using neural networks. While numerical experiments demonstrate the efficiency of the method for specific test cases, a convergence analysis of the proposed method is currently lacking. In \cite{koval2024tractable}, a transport-map-based surrogate to the joint probability law is proposed, where the complexity is reduced by using tensor trains. \cs{In the context of optimization under uncertainty, which is very closely related to the optimal design problem, a one-shot framework can be shown to significantly reduce the computational costs \cite{Guth2021}.}

We will focus here on QMC methods for the approximation of the integrals. When dealing with integrands that are sufficiently smooth, it becomes possible to formulate QMC rules with error bounds independent of the number of stochastic variables, achieving faster convergence rates compared to Monte Carlo methods. Consequently, QMC methods have demonstrated considerable success in applications involving PDEs with random coefficients, as evidenced in works such as~\cs{\cite{herrmann2,GGKSS2019,gilbert,harbrecht,herrmann3,kuonuyenssurvey,KSSSU2015,KSS2012}}. They have proven especially effective in the realm of PDE-constrained optimization under uncertainty, as highlighted in~\cite{GKKSS2019,guth2022parabolic,Andreas3}.


\subsection{Outline of the paper}
In this paper, we analyze the Bayesian optimal design problem for model problems satisfying certain parametric regularity bounds. In particular, we make the following contributions:
\begin{itemize}
    \item We establish parametric regularity of the integrand for the Bayesian optimal design problem. To be more precise, the analysis is presented for the integrands with respect to parameters and data.
    \item We present an error analysis for the full tensor QMC method. We prove that the regularity of the forward problem leads to dimension-independent convergence rates (with respect to the parameters). In addition, we discuss and analyze a sparse tensor approach, which allows to improve the rate significantly while preserving the dimension-robustness. We show that the performance is comparable to that of a single integral. Note that the proposed approach is also applicable for other sampling strategies and therefore the analysis is on its own interesting for Bayesian optimal design.
    \item We numerically verify the predicted convergence rates for an elliptic PDE problem subject to an unknown diffusion coefficient.%
\end{itemize}

The remainder of this article is structured as follows. We first introduce the notation in the following subsection.
The problem setting for Bayesian optimal design is presented in Section~\ref{sec:problem} while %
Section~\ref{sec:qmc} gives an overview of QMC integration. The model problem and the corresponding optimal design problem is discussed in Section~\ref{sec:model}. We then present the regularity analysis for our model problem in Section~\ref{sec:reg}. This forms the basis for the error analysis in Section~\ref{sec:single}. Sections~\ref{sec:ftp} and~\ref{sec:stp} contain the main results for the full tensor and sparse tensor cubature. We illustrate the theoretical results with numerical experiments presented in Section~\ref{sec:numex}. We summarize the main results and also give an outlook to future work in Section~\ref{sec:conclusions}. The Appendices~\ref{sec:hypergeometric} and~\ref{sec:periodic} contain technical results needed for the regularity analysis and a summary of our main parametric regularity results for a periodic transformation of the model problem. 

\subsection{Notations and preliminaries}
Let $\bsnu=(\nu_j)_{j=1}^s\in\mathbb N_0^s$, $\bsm=(m_j)_{j=1}^s\in\mathbb N_0^s$, and let $\bsx=(x_j)_{j=1}^s$ be a sequence of real numbers. We define the notations
\begin{align*}
&\bsnu\leq \bsm\quad\text{if and only if}\quad \nu_j\leq m_j~\text{for all}~j\in\{1,\ldots,s\},\\
&|\bsnu|:=\sum_{j=1}^s\nu_j,\quad \partial_{\bsx}^{\bsnu}:=\prod_{j=1}^s \frac{\partial^{\nu_j}}{\partial x_j^{\nu_j}},\quad \binom{\bsnu}{\bsm}:=\prod_{j=1}^s\binom{\nu_j}{m_j},\\
&\bsnu!:=\prod_{j=1}^s\nu_j!,\quad \boldsymbol x^{\bsnu}:=\prod_{j=1}^sx_j^{\nu_j},
\end{align*}
where we use the convention $0^0:=1$. Moreover, we introduce the notation $\{1:s\}:=\{1,2,\ldots,s\}$ and define the support of a multi-index $\bsnu$ by setting ${\rm supp}(\bsnu):=\{j\in \{1:s\}\mid \nu_j\neq 0\}$.

For \vk{a nonempty} domain $D\subset \mathbb R^d$ with given $d\in\mathbb N$, we define the Sobolev space $H^k(D)$ of order $k\in\mathbb N$ by
\[
H^k(D):=\{u\in L^2(D):\partial_{\bsx}^\bsalpha u \in L^2(D) \mbox{ for all }|\bsalpha|\le k,~\bsalpha\in\mathbb N_0^d \}
\]
and we equip this space with the norm
\[
\|u\|_{H^k(D)}:=\bigg(\sum_{|\bsalpha|\le k}\int_D |\partial_{\bsx}^\bsalpha u(\bsx)|^2\,\mathrm d\bsx\bigg)^{1/2},\quad u\in H^k(D),
\]
induced by the inner product
\[
\langle u,v \rangle_{H^k(D)}:=\sum_{|\bsalpha|\le k}\int_D \partial_{\bsx}^\bsalpha u(\bsx)\partial_{\bsx}^\bsalpha v(\bsx)\,\mathrm d\bsx,\quad u,v\in H^k(D).
\]
Further, $H^k_0(D)$ is the closure of $C^\infty_0(D)$ in the topology of $H^k(D)$.
\vk{We define $\|\boldsymbol x\|_{\Gamma^{-1}}:=\sqrt{\boldsymbol x^{\rm T}\Gamma^{-1}\boldsymbol x}$ for $\bsx\in \mathbb R^d$ and a symmetric positive definite matrix $\Gamma\in\mathbb R^{d \times d}$}.

\section{Problem setting}\label{sec:problem}
Let $G_s\!:{\Theta_s}\times \Xi\to \mathbb R^k$ be a mapping depending on parameter $\bstheta\in{\Theta_s}$ and a design parameter $\bsxi\in\Xi$. %
 \cs{For simplicity, we assume in the following that both the parameter space ${\Theta_s}$ and the design space $\Xi$ are finite-dimensional, compact subsets of Euclidean spaces, possibly obtained after dimension truncation. For forward problems governed by (partial) differential equations with random fields as unknown parameters, the truncation error is well understood, cf.~\cite{doi:10.1137/23M1593188}. The dependence on the dimension of the truncated parameter domain will be carefully tracked in this manuscript in order to design a method suitable for the high or even infinite-dimensional setting.}

We consider the measurement model
\begin{align}
\bsy=G_s(\bstheta,\bsxi)+\bseta,\label{eq:model}
\end{align}
where $\bsy\in\mathbb R^k$ is the measurement data and $\bseta\in\mathbb R^k$ is Gaussian noise such that $\bseta\sim \mathcal N(\mathbf 0,\Gamma)$, with $\Gamma \in \mathbb R^{k\times k}$ being symmetric positive definite.

In Bayesian optimal experimental design, the goal is to recover the design parameter $\bsxi$ for the Bayesian inference of $\bstheta$, which we model as a random variable endowed with a prior distribution $\pi(\bstheta)$.  A measure of the information gain for a given design $\bsxi$ and data $\bsy$ is given by the \emph{Kullback--Leibler divergence}
\begin{align}
D_{\rm KL}(\pi(\cdot|\bsy,\bsxi)\|\pi(\cdot)):=\int_{{\Theta_s}}\log\bigg(\frac{\pi(\bstheta|\bsy,\bsxi)}{\pi(\bstheta)}\bigg)\pi(\bstheta|\bsy,\bsxi)\,{\rm d}\bstheta.\label{eq:KLdef}
\end{align}
Assuming existence and uniqueness, a Bayesian optimal design $\bsxi^*$ maximizing the expected utility~\eqref{eq:KLdef} over the design space $\Xi$ with respect to the data $\bsy$ and model parameters $\bstheta$ is then given by
\begin{align}
\bsxi^*:=\underset{\bsxi\in\Xi}{\rm arg\,max}\int_{\rev{\mathbb R^k}}\int_{{\Theta_s}}\log\bigg(\frac{\pi(\bstheta|\bsy,\bsxi)}{\pi(\bstheta)}\bigg)\pi(\bstheta|\bsy,\bsxi)\pi(\bsy|\bsxi)\,{\rm d}\bstheta\,{\rm d}\bsy,\label{eq:KLdef2}
\end{align}
where $\pi(\bstheta|\bsy,\bsxi)$ corresponds to the posterior distribution of the parameter $\bstheta$ and $\pi(\bsy|\bsxi):=\int_{{\Theta_s}}\pi(\bsy|\bstheta,\bsxi)\pi(\bstheta)\,{\rm d}\bstheta$ is the marginal distribution of the data $\bsy$. The posterior is given by Bayes' theorem
$$
\pi(\bstheta|\bsy,\bsxi)=\frac{\pi(\bsy|\bstheta,\bsxi)\pi(\bstheta)}{\pi(\bsy|\bsxi)},
$$
where we have the data likelihood
$$
\pi(\bsy|\bstheta,\bsxi):=C_{k,\Gamma}{\rm e}^{-\frac{1}{2} \|\bsy-G_s(\bstheta,\bsxi)\|_{\Gamma^{-1}}^2},\quad C_{k,\Gamma}:=\frac{1}{(\det(2\pi\Gamma))^{1/2}}.
$$
The {\em expected information gain} is the objective appearing in~\eqref{eq:KLdef2}, i.e.,
$$
{\rm EIG}:=\int_{\rev{\mathbb R^k}}\int_{{\Theta_s}}\log\bigg(\frac{\pi(\bstheta|\bsy,\bsxi)}{\pi(\bstheta)}\bigg)\pi(\bstheta|\bsy,\bsxi)\pi(\bsy|\bsxi)\,{\rm d}\bstheta\,{\rm d}\bsy.
$$
The goal of this paper is to develop a rigorous framework within which the high-dimensional integrals appearing in EIG can be approximated efficiently using QMC methods.

\section{Quasi-Monte Carlo integration}\label{sec:qmc}
Consider an $s$-dimensional integration problem
$$
I_s(F):=\int_{[0,1]^s}F(\boldsymbol{y})\,{\rm d}\boldsymbol{y}
$$
\cs{with continuous $F\!:[0,1]^s\to \mathbb R$}.
A randomly shifted lattice rule is a QMC cubature of the form
$$
\overline{Q}_{n,R}(F):=\frac{1}{R}\sum_{r=1}^R Q(F;\boldsymbol \Delta^{(r)}),\quad Q(F;\boldsymbol\Delta^{(r)}):=\frac{1}{n}\sum_{i=1}^nF(\{\boldsymbol t_i+\boldsymbol\Delta^{(r)}\}),
$$
where $\boldsymbol\Delta^{(1)},\ldots,\boldsymbol\Delta^{(R)}$ are i.i.d.~random shifts drawn from $\mathcal U([0,1]^s)$, $\{\cdot\}$ denotes the componentwise fractional part, the lattice points are
\begin{align}
\boldsymbol t_i:=\bigg\{\frac{i\boldsymbol z}{n}\bigg\},\quad i\in\{1,\ldots,n\},\label{eq:latticepoint}
\end{align}
and $\boldsymbol z\in\{0,\ldots,n-1\}^s$ denotes the \emph{generating vector}.

Suppose that the integrand $F$ belongs to a weighted unanchored Sobolev space $\mathcal W_{{s,\boldsymbol\gamma}}$ with bounded first order mixed partial derivatives with the norm
$$
\|F\|_{{\mathcal W_{s,\boldsymbol\gamma}}}:=\bigg(\sum_{\mathfrak u\subseteq\{1:s\}}\frac{1}{\gamma_{\mathfrak u}}\int_{[0,1]^{|\mathfrak u|}}\bigg(\int_{[0,1]^{s-|\mathfrak u|}}\frac{\partial^{|\mathfrak u|}}{\partial \boldsymbol{y}_{\mathfrak u}}F(\boldsymbol{y})\,{\rm d}\boldsymbol{y}_{-\mathfrak u}\bigg)^2\,{\rm d}\boldsymbol{y}_{\mathfrak u}\bigg)^{1/2},
$$
where $\boldsymbol\gamma:=(\gamma_{\mathfrak u})_{\mathfrak u\subseteq\{1:s\}}$ is a collection of positive weights, ${\rm d}\boldsymbol{y}_{\mathfrak u}:=\prod_{j\in\mathfrak u}{\rm d}y_j$, and ${\rm d}\boldsymbol{y}_{-\mathfrak u}:=\prod_{j\in\{1:s\}\setminus\mathfrak u}{\rm d}y_j$. Then the following well-known result shows that there exists a sequence of generating vectors which can be constructed using a \emph{component-by-component} (CBC) algorithm with rigorous error bounds~\cite{ckn06,dks13,cn06}.
\begin{lemma}[cf.~{\cite[Theorem~5.1]{kuonuyenssurvey}}]\label{lemma:affineqmc}
Let $F$ belong to the weighted unanchored Sobolev space over $[0,1]^s$ with weights $\boldsymbol{\gamma}=(\gamma_{\mathfrak u})_{\mathfrak u\subseteq\{1:s\}}$. An $s$-dimensional randomly shifted lattice rule with $n=2^m$ points, $m\geq 0$, can be constructed by a CBC algorithm such that, for $R$ independent random shifts and for all $\lambda\in (1/2,1]$,
$$
\textup{R.M.S.~error}\leq\frac{1}{\sqrt{R}}\bigg(\frac{2}{n}\sum_{\varnothing\neq \mathfrak u\subseteq\{1:s\}}\gamma_{\mathfrak u}^\lambda\varrho(\lambda)^{|\mathfrak u|}\bigg)^{1/(2\lambda)}\|F\|_{\mathcal W_{s,\boldsymbol\gamma}},
$$
where \textup{R.M.S.~error}$\,:=\sqrt{\mathbb{E}_{\boldsymbol{\Delta}}[|I_{s}(F)-\overline{Q}_{n,R}(F)|^2]}$ and 
$$
\varrho(\lambda):=\frac{2\zeta(2\lambda)}{(2\pi^2)^\lambda}.
$$
Here, $\mathbb E_{\boldsymbol\Delta}[\cdot]$ denotes the expected value with respect to uniformly distributed random shift over $[0,1]^s$ and $\zeta(x):=\sum_{\ell=1}^\infty \ell^{-x}$ is the \emph{Riemann zeta function} for $x>1$.
\end{lemma}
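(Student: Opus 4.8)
The plan is to follow the classical reproducing-kernel route for randomly shifted lattice rules: reduce the error for the given integrand $F$ to a shift-averaged worst-case error depending only on the generating vector $\bsz$, and then control that worst-case error through the CBC construction. For the first step I would use that each single-shift rule $Q(F;\bsDelta)$ is an unbiased estimator of $I_s(F)$ when $\bsDelta\sim\mathcal U([0,1]^s)$, so averaging $R$ independent shifts divides the single-shift variance $\bbE_{\bsDelta}[|I_s(F)-Q(F;\bsDelta)|^2]$ by $R$. Writing $e_{n,s}(\bsz):=\sup_{\|F\|_{\calW_{s,\bsgamma}}\le 1}\sqrt{\bbE_{\bsDelta}[|I_s(F)-Q(F;\bsDelta)|^2]}$ for the shift-averaged worst-case error over the weighted unanchored Sobolev space, this gives
\[
\textup{R.M.S.~error}=\frac{1}{\sqrt{R}}\sqrt{\bbE_{\bsDelta}\big[|I_s(F)-Q(F;\bsDelta)|^2\big]}\le\frac{1}{\sqrt{R}}\,e_{n,s}(\bsz)\,\|F\|_{\calW_{s,\bsgamma}}.
\]

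Next I would compute $e_{n,s}(\bsz)$ explicitly. Using the reproducing kernel of $\calW_{s,\bsgamma}$ together with the group structure of the lattice (so that componentwise differences $\bst_i-\bst_{i'}$ are again lattice points), and averaging the kernel over the shift, the constant and first-order parts cancel and one is left with
\[
e_{n,s}^2(\bsz)=\sum_{\emptyset\neq\mathfrak u\subseteq\{1:s\}}\gamma_{\mathfrak u}\,\frac{1}{n}\sum_{i=0}^{n-1}\prod_{j\in\mathfrak u}B_2(\{iz_j/n\}),
\]
where $B_2(x)=x^2-x+\tfrac{1}{6}$ is the second Bernoulli polynomial.

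The core is the CBC argument. Since $\lambda\in(1/2,1]$, Jensen's inequality applied to the nonnegative summands gives $[e_{n,s}^2(\bsz)]^{\lambda}\le\sum_{\emptyset\neq\mathfrak u}\gamma_{\mathfrak u}^{\lambda}(\frac{1}{n}\sum_{i=0}^{n-1}\prod_{j\in\mathfrak u}B_2(\{iz_j/n\}))^{\lambda}$. I would then build $z_1,\dots,z_s$ one coordinate at a time, at each step $d$ selecting the value $z_d$ among the residues coprime to $n=2^m$ (the odd ones) that minimizes this bound, and estimating that minimum by the average over the admissible $z_d$. Invoking the Fourier expansion $B_2(\{x\})=\pi^{-2}\sum_{h\ge 1}\cos(2\pi hx)/h^2$ and the standard bounds for the resulting character sums produces the per-coordinate factor $\varrho(\lambda)=2\zeta(2\lambda)/(2\pi^2)^{\lambda}$, and an induction on $d$ closes the estimate to
\[
[e_{n,s}^2(\bsz)]^{\lambda}\le\frac{2}{n}\sum_{\emptyset\neq\mathfrak u\subseteq\{1:s\}}\gamma_{\mathfrak u}^{\lambda}\,\varrho(\lambda)^{|\mathfrak u|}.
\]
Taking the $1/(2\lambda)$ power and multiplying by $\|F\|_{\calW_{s,\bsgamma}}/\sqrt{R}$ then yields the asserted bound.

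I expect the main obstacle to be the number-theoretic averaging that produces $\varrho(\lambda)$: one must control the mean, over admissible generating-vector components, of the $\lambda$-th power of the Bernoulli sum, and the prime-power choice $n=2^m$ (rather than a prime) forces care in restricting to residues coprime to $n$ and in interleaving this character-sum estimate with the CBC induction. By contrast, the variance factorization of the first step and the kernel computation of the second are routine once the RKHS framework is in place.
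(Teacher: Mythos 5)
The paper does not prove this lemma---it is quoted directly from Theorem~5.1 of the Kuo--Nuyens survey---and your outline reproduces exactly the standard argument of that source: unbiasedness of each single-shift rule giving the $1/\sqrt{R}$ variance reduction, the shift-averaged worst-case error expressed through $B_2(x)=x^2-x+\tfrac16$, Jensen's inequality with exponent $\lambda\in(1/2,1]$ applied to the nonnegative dual-lattice representation, and the CBC averaging over the $\varphi(n)=n/2$ odd residues modulo $n=2^m$, which produces the factor $2/n$ and the per-coordinate constant $\varrho(\lambda)=2\zeta(2\lambda)/(2\pi^2)^{\lambda}$. Your sketch is correct, with the deferred character-sum and induction details being the standard ones.
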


\section{Model problem}\label{sec:model}
We make the following assumptions regarding the properties of the mathematical model {\eqref{eq:model}}.

\begin{assumption}\rm $ $
\begin{enumerate}[label={\rm (A1.\arabic*)},align=right,leftmargin=2.5\parindent]%
\item \cs{${\Theta_s}:=[-\frac12,\frac12]^s$} and $\pi(\bstheta):=1$ for $\bstheta\in{\Theta_s}$ and $0$ otherwise.\label{eq:A2}
\item\label{eq:A1} \vk{Let $C\geq 1$ \rev{and $\beta\geq1$ be constants} and let $\bsb:=(b_j)_{j=1}^\infty\in\ell^p(\mathbb N)$ for some $p\in(0,1)$ be a sequence of nonnegative real numbers independently of $s\in \mathbb N$ and $\bsxi\in\Xi$. There} exists a sequence of forward models \vk{$G_s\!:{\Theta_s}\times\Xi\to \mathbb R^k$}, indexed by $s\in\mathbb N$, which satisfy the parametric regularity bound
$$
\|\partial_{\bstheta}^{\bsnu}G_s(\bstheta,\bsxi)\|\leq C\rev{(|\bsnu|!)^{\beta}}\bsb^{\bsnu}%
$$
for all $\bstheta\in{\Theta_s}$, $\bsnu\in\mathbb N_0^s$, and $\bsxi\in\Xi$.
\item There exists a lower bound $0<\mu_{\min}\leq1$ on the smallest eigenvalue of~$\Gamma$.\label{eq:A3}
\end{enumerate}
\end{assumption}

Assumption~\ref{eq:A1} \rev{corresponds to a \emph{Gevrey regularity} assumption. There has been a lot of interest in the study of parametric PDE problems subject to Gevrey regular input random fields recently~\cite{chernovle2,chernovle1,schmidlin24}. Specifically, it has been observed to be a sufficient condition to establish dimension-independent QMC convergence rates,} while condition~\ref{eq:A2} implies that
\begin{align}
{\rm EIG}&=\int_{\rev{\mathbb R^k}}\int_{{\Theta_s}}\log\bigg(\frac{\pi(\boldsymbol y|\boldsymbol\theta,\boldsymbol\xi)}{\pi(\boldsymbol y|\boldsymbol\xi)}\bigg)\pi(\boldsymbol y|\boldsymbol\theta,\boldsymbol\xi)\pi(\boldsymbol\theta)\,{\rm d}\boldsymbol\theta\,{\rm d}\boldsymbol y\notag\\
&=\log C_{k,\Gamma}-\frac{k}{2} -\int_{\rev{\mathbb R^k}} \log\bigg(\int_{{\Theta_s}}C_{k,\Gamma}{\rm e}^{-\Phi(\boldsymbol\theta,\boldsymbol\xi)}\,{\rm d}\boldsymbol\theta\bigg)\int_{{\Theta_s}}C_{k,\Gamma}{\rm e}^{-\Phi(\boldsymbol\theta,\boldsymbol\xi)}\,{\rm d}\boldsymbol\theta\,{\rm d}\boldsymbol y,\label{eq:numexref}
\end{align}
with potential $\Phi(\boldsymbol\theta,\boldsymbol\xi)=\frac{1}{2} \|\boldsymbol y-G_s(\boldsymbol\theta,\boldsymbol\xi)\|_{\Gamma^{-1}}^2$, meaning that it suffices to investigate the double integral
\begin{align}\label{eq:integralofinterest}
\int_{\rev{\mathbb R^k}} \log\bigg(\int_{{\Theta_s}}C_{k,\Gamma}{\rm e}^{-\frac{1}{2} \|\boldsymbol y-G_s(\boldsymbol\theta,\boldsymbol\xi)\|_{\Gamma^{-1}}^2}\,{\rm d}\boldsymbol\theta\bigg)\int_{{\Theta_s}}C_{k,\Gamma}{\rm e}^{-\frac{1}{2} \|\boldsymbol y-G_s(\boldsymbol\theta,\boldsymbol\xi)\|_{\Gamma^{-1}}^2}\,{\rm d}\boldsymbol\theta\,{\rm d}\boldsymbol y.\end{align}

\vk{We note that the above set of assumptions also cover elliptic PDE problems subject to uncertain coefficients.

\begin{example}\rm
Let $D\subset \mathbb R^d$, $d\in\{1,2,3\}$, be a nonempty, bounded, and convex Lipschitz domain and let $z\in L^2(D)$. For each $\bstheta\in {\Theta}:=[-1/2,1/2]^{\mathbb N}$, there exists a strong solution $u(\cdot,\bstheta)\in H^2(D)\cap H_0^1(D)$ to
$$
\begin{cases}
-\nabla \cdot (a(\bsx,\bstheta)\nabla u(\bsx,\bstheta))=z(\bsx),&\bsx\in D,~\bstheta\in{\Theta_s},\\
u(\bsx,\bstheta)=0,&\bsx\in\partial D,~\bstheta\in{\Theta_s},
\end{cases}
$$
where the diffusion coefficient is parameterized by
$$
a(\bsx,\bstheta)=a_0(\bsx)+\sum_{j=1}^\infty\theta_j\psi_j(\bsx),\quad \bsx\in D,~\bstheta\in {\Theta},
$$
with $a_0\!:\overline{D}\to\mathbb R$ and $\psi_j\!:\overline{D}\to\mathbb R$, $j\in\{1,\ldots,s\}$, denoting Lipschitz continuous functions such that $0<a_{\min}\leq a(\bsx,\bstheta)\leq a_{\max}<\infty$ for all $\bsx\in D$ and $\bstheta\in {\Theta}$.

Let $G_s(\bstheta,\bsxi):=\mathcal O_{\bsxi}(u(\cdot,(\theta_1,\ldots,\theta_s,0,0,\ldots)))$, $\bstheta\in\Theta_s$, with $\mathcal O_{\bsxi}\!:H_0^1(D)\to \mathbb R^k$ denoting a bounded, linear functional such that $\sup_{\bsxi\in \Xi}\|\mathcal O_{\bsxi}\|_{H_0^1\to \mathbb R}<\infty$. An example of such an operator would be, e.g., $\mathcal O_{\bsxi}(v)=(v(\bsx))_{\bsx\in \bsxi}$ for $\bsxi\in \Xi$ with $\Xi=\{(\bsx_1,\ldots,\bsx_k)\in\Upsilon^k\mid \bsx_i\neq \bsx_j~\text{for}~i\neq j\}$, where $\Upsilon\subset D$ and $0<k\leq m:=|\Upsilon|<\infty$. In this case, the optimal design problem~\eqref{eq:KLdef2} would correspond to choosing the best $k$ sensor locations out of $m$ possibilities to maximize the expected information gain on the unknown parameter $\bstheta$.
\end{example}}

\vk{While we shall mainly focus on applying QMC integration over lattice point sets to~\eqref{eq:integralofinterest} subject to~\ref{eq:A2}--\ref{eq:A3}, it is well-known that lattice point sets yield higher-order cubature convergence rates for periodic integrands (cf., e.g.,~\cite{korobovpaper}). In analogy to~\cite{KKS}, we shall also study the EIG for a periodic reparameterization of our model problem
\begin{align}
G_{s,{\rm per}}(\bstheta)=G_s(\sin(2\pi\bstheta)),\quad \bstheta\in \Theta_s.\label{eq:periodicmodelproblem}
\end{align}
We summarize our results for the parametric regularity and QMC integration rates corresponding to the model~\eqref{eq:periodicmodelproblem} in Appendix~\ref{sec:periodic}.}

\subsection{Decomposing the high-dimensional integral}

For the practical implementation and subsequent analysis, we let $K>0$ and decompose
\begin{align*}
&\int_{\mathbb R^k}\log\bigg(\int_{\Theta_s} C_{k,\Gamma}{\rm e}^{-\frac{1}{2}\|\bsy-G_s(\bstheta,\bsxi)\|_{\Gamma^{-1}}^2}\,{\rm d}\bstheta\bigg)\int_{\Theta_s} C_{k,\Gamma}{\rm e}^{-\frac{1}{2}\|\bsy-G_s(\bstheta,\bsxi)\|_{\Gamma^{-1}}^2}\,{\rm d}\bstheta\,{\rm d}\bsy\\
&=\mathcal I_K + \widetilde{\mathcal I}_K,
\end{align*}
where
\begin{align*}
\mathcal I_K :\!&\!=\int_{[-K,K]^k}\log\bigg(\int_{\Theta_s} C_{k,\Gamma}{\rm e}^{-\frac{1}{2}\|\bsy-G_s(\bstheta,\bsxi)\|_{\Gamma^{-1}}^2}\,{\rm d}\bstheta\bigg)\\
&\quad \times\int_{\Theta_s} C_{k,\Gamma}{\rm e}^{-\frac{1}{2}\|\bsy-G_s(\bstheta,\bsxi)\|_{\Gamma^{-1}}^2}\,{\rm d}\bstheta\,{\rm d}\bsy,\\
\widetilde{\mathcal I}_K:\!&\!=\int_{\mathbb R^k\setminus[-K,K]^k}\log\bigg(\int_{\Theta_s} C_{k,\Gamma}{\rm e}^{-\frac{1}{2}\|\bsy-G_s(\bstheta,\bsxi)\|_{\Gamma^{-1}}^2}\,{\rm d}\bstheta\bigg)\\
&\quad \times \int_{\Theta_s} C_{k,\Gamma}{\rm e}^{-\frac{1}{2}\|\bsy-G_s(\bstheta,\bsxi)\|_{\Gamma^{-1}}^2}\,{\rm d}\bstheta\,{\rm d}\bsy.
\end{align*}
Let us analyze the quantity $\widetilde{\mathcal I}_K$. %
\cs{First of all, we observe that
\begin{align*}
\int_{\Theta_s} C_{k,\Gamma}{\rm e}^{-\frac{1}{2}\|\bsy-G_s(\bstheta,\bsxi)\|_{\Gamma^{-1}}^2}\,{\rm d}\bstheta&\leq C_{k,\Gamma}{\rm e}^{\frac{2\|\overline{G}_s\|^2}{\mu_{\max}}}{\rm e}^{-\frac{1}{4\mu_{\max}}\|\bsy-\overline{G}_s\|^2}\quad\text{for all}~\bsy\in \mathbb R^k,
\end{align*}
where $\overline{G}_s:=G_s(\bstheta^*,\bsxi^*)$, with $(\bstheta^*,\bsxi^*)\in {\rm arg\,max}_{(\bstheta,\bsxi)\in (\Theta_s,\Xi)}\|G_s(\bstheta,\bsxi)\|$, and $\mu_{\max}>0$ is the largest eigenvalue of $\Gamma$.
Since $x\mapsto |x\log x|$ is a monotonically increasing function over the interval $(0,{\rm e}^{-1}]$, it is not difficult to see that setting
$$
K\geq \|\overline{G}_s\|+2\sqrt{\mu_{\max}\log C_{k,\Gamma}+2\|\overline{G}_s\|^2+\mu_{\max}}
$$
ensures that $C_{k,\Gamma}{\rm e}^{\frac{2\|\overline{G}_s\|^2}{\mu_{\max}}}{\rm e}^{-\frac{1}{4\mu_{\max}}\|\bsy-\overline{G}_s\|^2}\leq {\rm e}^{-1}$ for all $\bsy\in\mathbb R^k\setminus[-K,K]^k$. This allows us to estimate
\begin{align*}
&|\widetilde{\mathcal I}_K|\\
&\!\leq \!-\!\int_{\mathbb R^k\setminus[-K,K]^k}\!\log(C_{k,\Gamma}{\rm e}^{\frac{2\|\overline{G}_s\|^2}{\mu_{\max}}}\!{\rm e}^{-\frac{1}{4\mu_{\max}}\|\bsy-\overline{G}_s\|^2})C_{k,\Gamma}{\rm e}^{\frac{2\|\overline{G}_s\|^2}{\mu_{\max}}}\!{\rm e}^{-\frac{1}{4\mu_{\max}}\|\bsy-\overline{G}_s\|^2}{\rm d}\bsy.
\end{align*}
Noting further that $0\leq -x\log x\leq \sqrt x$ for all $x\in(0,{\rm e}^{-1}]$, we deduce that
\begin{align*}
&|\widetilde{\mathcal I}_K|\\
&\leq \!C_{k,\Gamma}^{1/2}{\rm e}^{\frac{\|\overline{G}_s\|^2}{\mu_{\max}}}\!\int_{\mathbb R^k}{\rm e}^{-\frac{1}{8\mu_{\max}}\|\bsy-\overline{G}_s\|^2}{\rm d}\bsy-C_{k,\Gamma}^{1/2}{\rm e}^{\frac{\|\overline{G}_s\|^2}{\mu_{\max}}}\int_{[-K,K]^k}{\rm e}^{-\frac{1}{8\mu_{\max}}\|\bsy-\overline{G}_s\|^2}{\rm d}\bsy\\
&=\!C_{k,\Gamma}^{1/2}{\rm e}^{\frac{\|\overline{G}_s\|^2}{\mu_{\max}}}(8\pi\mu_{\max})^{k/2}\\
&\quad \  -C_{k,\Gamma}^{1/2}{\rm e}^{\frac{\|\overline{G}_s\|^2}{\mu_{\max}}}(8\pi\mu_{\max})^{k/2}\prod_{j=1}^k\frac12\bigg({\rm erf}\bigg(\frac{(\overline{G}_s)_j+K}{\sqrt{8\mu_{\max}}}\bigg)-{\rm erf}\bigg(\frac{(\overline{G}_s)_j-K}{\sqrt{8\mu_{\max}}}\bigg)\bigg).
\end{align*}
}
\rev{
\begin{remark}
Given a prescribed tolerance $\varepsilon>0$, we may choose $K$ such that $|\widetilde{\mathcal I}_K|\le \varepsilon$. 
Indeed, setting $M:=\max_{1\le j\le k}|(\overline G_s)_j|$, we obtain for $K\ge M$
\begin{align*}
|\widetilde{\mathcal I}_K|
&\le 
C_{k,\Gamma}^{1/2}{\rm e}^{\frac{\|\overline{G}_s\|^2}{\mu_{\max}}}\int_{\mathbb R^k\setminus[-K,K]^k}
    {\rm e}^{-\frac{1}{8\mu_{\max}}\|\bsy-\overline G_s\|^2}\,{\rm d}\bsy \\
&\le 
C_{k,\Gamma}^{1/2}{\rm e}^{\frac{\|\overline{G}_s\|^2}{\mu_{\max}}}\,{\rm e}^{-\frac{(K-M)^2}{8\mu_{\min}}}.
\end{align*}
Hence, it suffices to take
\[
K \ge M + 2\sqrt{2\mu_{\max}\log(C_{k,\Gamma}^{1/2}{\rm e}^{\frac{\|\overline{G}_s\|^2}{\mu_{\max}}}/\varepsilon)}.
\]
\end{remark}}
\section{Parametric regularity}\label{sec:reg}
In order to establish dimension-independent QMC convergence rates, we first analyze the parametric regularity of the integrand. The analysis is split into the inner \vk{(parametric)} integrand (Subsection~\ref{sec:inner}), the mixed regularity  (Subsection~\ref{sec:mixed}), and the outer \vk{(data)} integrand (Subsection~\ref{sec:outer}).
\subsection{Parametric regularity of the inner integrand}\label{sec:inner}
We begin by considering the parametric regularity of the inner integrand appearing in the expression
\begin{align}
\int_{{\Theta_s}}{\rm e}^{-\frac{1}{2} \|\boldsymbol y-G_s(\boldsymbol\theta,\boldsymbol\xi)\|_{\Gamma^{-1}}^2}\,{\rm d}\boldsymbol\theta.\label{eq:innerintdef}
\end{align}

Cram\'er's inequality (cf., e.g.,~\cite[formula 22.14.17]{abramowitzstegun}) 
$$
\bigg|\frac{{\rm d}^{\nu}}{{\rm d}x^{\nu}}{\rm e}^{-x^2/2}\bigg|\leq 1.1\cdot \sqrt{{\nu}!}\quad\text{for all}~\nu\geq 0~\text{and}~x\in\mathbb R
$$
yields that
$$
|\partial_{\boldsymbol x}^{\boldsymbol{\nu}}{\rm e}^{-\boldsymbol{x}^{\rm T}\boldsymbol{x}/2}|\leq 1.1^k\cdot \sqrt{\boldsymbol{\nu}!}\quad\text{for all}~\boldsymbol\nu\in\mathbb N_0^k~\text{and}~\boldsymbol x\in\mathbb R^k.
$$
Let $\boldsymbol{\nu}\in \mathbb N_0^d\setminus\{\mathbf 0\}$. By Fa\`a di Bruno's formula~\cite{savits}, we have that
\begin{align}
\partial_{\boldsymbol\theta}^{\boldsymbol{\nu}}{\rm e}^{-\frac{1}{2} \|\boldsymbol y-G_s(\boldsymbol\theta,\boldsymbol\xi)\|_{\Gamma^{-1}}^2}&=\sum_{\substack{\boldsymbol \lambda\in \mathbb N_0^k\\ 1\leq |\boldsymbol \lambda|\leq |\boldsymbol{\nu}|}}\partial_{\boldsymbol x}^{\boldsymbol\lambda}{\rm e}^{-\boldsymbol x^{\rm T}\boldsymbol x/2}\bigg|_{\boldsymbol x=\Gamma^{-1/2}(\boldsymbol y-G_s(\boldsymbol\theta,\boldsymbol\xi))} \alpha_{\boldsymbol{\nu},\boldsymbol\lambda}(\boldsymbol\theta),\label{eq:faadibruno}
\end{align}
where $(\alpha_{\boldsymbol{\nu},\boldsymbol\lambda})_{\boldsymbol{\nu}\in\mathbb N_0^s,\boldsymbol\lambda\in\mathbb Z^k}=(\alpha_{\boldsymbol{\nu},\boldsymbol\lambda}(\bstheta))_{\boldsymbol{\nu}\in\mathbb N_0^s,\boldsymbol\lambda\in\mathbb Z^k}$ are defined recursively by
\begin{align}\label{eq:alphaseq1}
&\alpha_{\boldsymbol{\nu},\mathbf 0}\equiv \delta_{\boldsymbol{\nu},\mathbf 0},\\
&\alpha_{\boldsymbol{\nu},\boldsymbol\lambda}\equiv 0\quad\text{if}~|\boldsymbol{\nu}|<|\boldsymbol\lambda|~\text{or}~\boldsymbol\lambda\not\geq \mathbf 0~(\text{i.e.,~if $\boldsymbol\lambda$ contains negative entries}),~\text{and}\label{eq:alphaseq2}\\
&\alpha_{\boldsymbol{\nu}+\boldsymbol e_j,\boldsymbol\lambda}= \sum_{\ell\in{\rm supp}(\boldsymbol\lambda)} \sum_{\mathbf 0\leq \boldsymbol{m}\leq \boldsymbol{\nu}}\binom{\boldsymbol{\nu}}{\boldsymbol{m}}\partial_{\bstheta}^{\boldsymbol{m}+\boldsymbol e_j}[\Gamma^{-1/2}(\boldsymbol y- G_s(\boldsymbol\theta,\boldsymbol\xi))]_\ell\alpha_{\boldsymbol{\nu}-\boldsymbol{m},\boldsymbol\lambda-\boldsymbol e_\ell}\label{eq:alphaseq3}
\end{align}
otherwise. From this and the assumption~\ref{eq:A1}, we easily infer that
\begin{align*}
|\partial_{\boldsymbol\theta}^{\boldsymbol\nu}{\rm e}^{-\frac{1}{2}\|\boldsymbol y-G_s(\boldsymbol\theta,\boldsymbol\xi)\|_{\Gamma^{-1}}^2}|&\leq 1.1^k\cdot C^{|\boldsymbol{\nu}|}\mu_{\min}^{-|\boldsymbol{\nu}|/2}\sum_{\substack{\boldsymbol \lambda\in \mathbb N_0^k\\ 1\leq |\boldsymbol \lambda|\leq |\boldsymbol{\nu}|}}\sqrt{\boldsymbol\lambda!}\rev{\chi}_{\boldsymbol{\nu},\boldsymbol\lambda},
\end{align*}
where $0<\mu_{\min}\leq 1$ by assumption~\ref{eq:A3}. We now have the recurrence
\begin{align}
&\rev{\chi}_{\boldsymbol{\nu},\mathbf 0}= \delta_{\boldsymbol{\nu},\mathbf 0},\label{eq:faa1}\\
&\rev{\chi}_{\boldsymbol{\nu},\boldsymbol\lambda}= 0\quad\text{if}~|\boldsymbol{\nu}|<|\boldsymbol\lambda|~\text{or}~\boldsymbol\lambda\not\geq \mathbf 0,\label{eq:faa2}\\
&\rev{\chi}_{\boldsymbol{\nu}+\boldsymbol e_j,\boldsymbol\lambda}\leq \sum_{\ell\in{\rm supp}(\boldsymbol\lambda)} \sum_{\mathbf 0\leq \boldsymbol{m}\leq \boldsymbol{\nu}}\binom{\boldsymbol{\nu}}{\boldsymbol{m}}\rev{((|\boldsymbol{m}|+1)!)^{\beta}}\boldsymbol b^{\boldsymbol{m}+\boldsymbol e_j}\rev{\chi}_{\boldsymbol{\nu}-\boldsymbol{m},\boldsymbol\lambda-\boldsymbol e_\ell}~\text{otherwise}.\label{eq:faa3}
\end{align}
 
The above recursion leads to the following inductive bound.
\begin{lemma}\label{lemma:lah} Let $\boldsymbol{\nu}\in\mathbb N_0^s\setminus\{\mathbf 0\}$ and $\boldsymbol\lambda\in\mathbb N_0^k\setminus\{\mathbf 0\}$ such that $1\leq |\boldsymbol\lambda|\leq |\boldsymbol{\nu}|$. The sequence defined by~\eqref{eq:faa1}--\eqref{eq:faa3} satisfies%
\begin{align}
\rev{\chi}_{\boldsymbol{\nu},\boldsymbol\lambda}\leq\rev{\bigg(\frac{|\boldsymbol{\nu}|!(|\boldsymbol{\nu}|-1)!}{\boldsymbol\lambda!(|\boldsymbol{\nu}|-|\boldsymbol\lambda|)!(|\boldsymbol\lambda|-1)!}\bigg)^{\beta}}\boldsymbol b^{\boldsymbol{\nu}}.\label{eq:lah}
\end{align}
\rev{If $\beta=1$, then the} result is sharp in the sense that~\eqref{eq:lah} holds with equality provided that~{\rm \eqref{eq:faa3}} holds with equality.
\end{lemma}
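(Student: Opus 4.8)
The plan is to prove \eqref{eq:lah} by induction on $n:=|\boldsymbol{\nu}|$, exploiting that the weight $\boldsymbol b^{\boldsymbol{\nu}}$ separates multiplicatively through the recurrence and hence reduces everything to a purely combinatorial estimate. Write the claimed bound as $\chi_{\boldsymbol{\nu},\boldsymbol\lambda}\le B(\boldsymbol{\nu},\boldsymbol\lambda)\,\boldsymbol b^{\boldsymbol{\nu}}$ with $B(\boldsymbol{\nu},\boldsymbol\lambda):=\big(\tfrac{|\boldsymbol{\nu}|!(|\boldsymbol{\nu}|-1)!}{\boldsymbol\lambda!(|\boldsymbol{\nu}|-|\boldsymbol\lambda|)!(|\boldsymbol\lambda|-1)!}\big)^{\beta}$. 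Since $\boldsymbol b^{\boldsymbol{m}+\boldsymbol e_j}\boldsymbol b^{\boldsymbol{\nu}-\boldsymbol{m}}=\boldsymbol b^{\boldsymbol{\nu}+\boldsymbol e_j}$, inserting this ansatz into the right-hand side of \eqref{eq:faa3} factors out $\boldsymbol b^{\boldsymbol{\nu}+\boldsymbol e_j}$ uniformly, so only the combinatorial coefficient has to be tracked. The base case $n=1$ forces $|\boldsymbol\lambda|=1$, and \eqref{eq:faa1}--\eqref{eq:faa3} leave only the term $\boldsymbol{m}=\mathbf 0$, giving $\chi_{\boldsymbol e_j,\boldsymbol e_\ell}\le b_j=B(\boldsymbol e_j,\boldsymbol e_\ell)\,\boldsymbol b^{\boldsymbol e_j}$, as required.

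For the inductive step I first reduce the case of general $\beta\ge1$ to the linear case $\beta=1$. After applying the induction hypothesis to $\chi_{\boldsymbol{\nu}-\boldsymbol{m},\boldsymbol\lambda-\boldsymbol e_\ell}$, each summand carries the integer factor $\binom{\boldsymbol{\nu}}{\boldsymbol{m}}\ge1$ outside a $\beta$-th power. Because $\beta\ge1$, I would use $\binom{\boldsymbol{\nu}}{\boldsymbol{m}}\le\binom{\boldsymbol{\nu}}{\boldsymbol{m}}^{\beta}$ to absorb it inside the power, and then the elementary inequality $\sum_i a_i^{\beta}\le(\sum_i a_i)^{\beta}$ (valid for $a_i\ge0$, $\beta\ge1$) to pull the $\beta$-th power outside the whole double sum. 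This reduces the inductive step for every $\beta\ge1$ to the single identity
\[
\sum_{\ell\in\mathrm{supp}(\boldsymbol\lambda)}\ \sum_{\mathbf 0\le\boldsymbol{m}\le\boldsymbol{\nu}}\binom{\boldsymbol{\nu}}{\boldsymbol{m}}(|\boldsymbol{m}|+1)!\,B_1(\boldsymbol{\nu}-\boldsymbol{m},\boldsymbol\lambda-\boldsymbol e_\ell)=B_1(\boldsymbol{\nu}+\boldsymbol e_j,\boldsymbol\lambda),
\]
where $B_1$ is the coefficient for $\beta=1$. Both inequalities above become equalities when $\beta=1$, so proving this identity simultaneously delivers the sharpness statement, since \eqref{eq:faa3} then propagates equality.

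It remains to verify the $\beta=1$ identity, where I set $q:=|\boldsymbol\lambda|$. The degenerate case $q=1$ is handled separately: then $\boldsymbol\lambda-\boldsymbol e_\ell=\mathbf 0$ and $\chi_{\boldsymbol{\nu}-\boldsymbol{m},\mathbf 0}=\delta_{\boldsymbol{\nu}-\boldsymbol{m},\mathbf 0}$ leaves only $\boldsymbol{m}=\boldsymbol{\nu}$, reproducing $B_1(\boldsymbol{\nu}+\boldsymbol e_j,\boldsymbol e_\ell)=(n+1)!$ directly. For $q\ge2$ I would proceed in four collapses: (i) sum over $\ell$ using $(\boldsymbol\lambda-\boldsymbol e_\ell)!=\boldsymbol\lambda!/\lambda_\ell$ and $\sum_{\ell}\lambda_\ell=q$, producing a factor $q/\boldsymbol\lambda!$; (ii) sum over $\boldsymbol{m}$ at fixed $|\boldsymbol{m}|=r$ with the Vandermonde identity $\sum_{|\boldsymbol{m}|=r}\binom{\boldsymbol{\nu}}{\boldsymbol{m}}=\binom{n}{r}$, legitimate because the remaining factors depend on $\boldsymbol{m}$ only through $r$; (iii) simplify via $\binom{n}{r}(n-r)!\,(r+1)!=n!\,(r+1)$ and $\tfrac{(n-r-1)!}{(n-r-q+1)!(q-2)!}=\binom{n-r-1}{q-2}$, reducing the expression to $\tfrac{q\,n!}{\boldsymbol\lambda!}\sum_{r}(r+1)\binom{n-r-1}{q-2}$; and (iv) after reindexing $t=n-1-r$, close with the convolution identity $\sum_{t}\binom{t}{q-2}\binom{n-t}{1}=\binom{n+1}{q}$, which yields precisely $\tfrac{(n+1)!\,n!}{\boldsymbol\lambda!(n+1-q)!(q-1)!}$, i.e.\ $B_1(\boldsymbol{\nu}+\boldsymbol e_j,\boldsymbol\lambda)$.

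I expect the main obstacle to be step (iii)--(iv): identifying the correct reindexing that turns the weighted sum $\sum_r(r+1)\binom{n-r-1}{q-2}$ into the Vandermonde-type convolution $\sum_t\binom{t}{q-2}\binom{n-t}{1}=\binom{n+1}{q}$. A secondary source of care is the bookkeeping of degenerate terms: those with $|\boldsymbol{\nu}-\boldsymbol{m}|<|\boldsymbol\lambda-\boldsymbol e_\ell|$ must be discarded using \eqref{eq:faa2} (equivalently, restricting $r\le n-q+1$ so that $\binom{n-r-1}{q-2}$ is a genuine binomial coefficient), and the case $q=1$ must be excluded from the general manipulation to avoid the ill-defined factorial $(q-2)!$. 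Once these are isolated, the remaining algebra is routine and the reduction to $\beta=1$ makes the $\beta\ge1$ case follow with no extra combinatorial work.
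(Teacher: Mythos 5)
Your proposal is correct and follows essentially the same route as the paper's proof: induction on $|\boldsymbol{\nu}|$ with the same base case, the same separate treatment of $|\boldsymbol\lambda|=1$, the collapse of the $\boldsymbol m$-sum via the generalized Vandermonde identity $\sum_{|\boldsymbol m|=r}\binom{\boldsymbol{\nu}}{\boldsymbol m}=\binom{|\boldsymbol{\nu}|}{r}$, and the reduction of $\beta\ge 1$ to $\beta=1$ via $\binom{\boldsymbol{\nu}}{\boldsymbol m}\le\binom{\boldsymbol{\nu}}{\boldsymbol m}^{\beta}$ together with Jensen's inequality $\sum_i a_i^{\beta}\le(\sum_i a_i)^{\beta}$, which also yields the sharpness claim at $\beta=1$ exactly as in the paper. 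The one genuine (minor) divergence is the closing step: the paper evaluates $\sum_{r}(r+1)\frac{(n-r-1)!}{(n-r-q+1)!}$ by exhibiting an explicit antidifference via Gosper summation (Lemma~\ref{lemma:gosper} in Appendix~\ref{sec:hypergeometric}), whereas you reindex $t=n-1-r$ and invoke the standard convolution $\sum_t\binom{t}{q-2}\binom{n-t}{1}=\binom{n+1}{q}$; both give the same value, and your variant has the small advantage of not requiring a separate hypergeometric-summation lemma. Your bookkeeping of the degenerate cases (restricting $r\le n-q+1$ so the binomial coefficient is genuine, and excluding $q=1$ from the general manipulation) matches the restrictions the paper imposes via~\eqref{eq:faa1}--\eqref{eq:faa2}, so there is no gap.
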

\begin{proof} The proof is carried out using induction with respect to the order of $\boldsymbol{\nu}$. 

The base step $|\boldsymbol{\nu}|=1$ is satisfied since for $j\in\{1,\ldots,s\}$ and $k'\in\{1,\ldots,k\}$, we have by~\eqref{eq:faa3} that
\begin{align*}
\rev{\chi}_{\boldsymbol e_j,\boldsymbol e_{k'}}\leq \sum_{\ell\in{\rm supp}(\boldsymbol\lambda)} \boldsymbol b^{\boldsymbol e_j}\rev{\chi}_{\mathbf 0,\boldsymbol e_{k'}-\boldsymbol e_\ell}=\boldsymbol b^{\boldsymbol e_j},
\end{align*}
as desired.

Let $\boldsymbol{\nu}\in\mathbb N_0^s\setminus\{\mathbf 0\}$ and suppose that the claim is true for all multi-indices with order $\leq |\boldsymbol{\nu}|$. We wish to prove the claim for all $1\leq |\boldsymbol\lambda|\leq |\boldsymbol{\nu}|+1$.

Let us begin by considering the special case $|\boldsymbol\lambda|=1$ separately. For $j\in\{1,\ldots,s\}$ and $k'\in\{1,\ldots,k\}$, we have that
\begin{align*}
\rev{\chi}_{\boldsymbol{\nu}+\boldsymbol e_j,\boldsymbol e_{k'}}&\leq \sum_{\ell\in{\rm supp}(\boldsymbol\lambda)}\sum_{\mathbf 0\leq \boldsymbol m\leq \boldsymbol{\nu}}\binom{\boldsymbol{\nu}}{\boldsymbol{m}}\rev{((|\boldsymbol{m}|+1)!)^{\beta}}\boldsymbol b^{\boldsymbol{m}+\boldsymbol e_j}\rev{\chi}_{\boldsymbol{\nu}-\boldsymbol{m},\boldsymbol e_{k'}-\boldsymbol e_{\ell}}\\
&= \sum_{\mathbf 0\leq \boldsymbol m\leq \boldsymbol{\nu}}\binom{\boldsymbol{\nu}}{\boldsymbol{m}}\rev{((|\boldsymbol{m}|+1)!)^{\beta}}\boldsymbol b^{\boldsymbol{m}+\boldsymbol e_j}\rev{\chi}_{\boldsymbol{\nu}-\boldsymbol{m},\mathbf 0}\\
&=\rev{((|\boldsymbol{\nu}|+1)!)^{\beta}}\boldsymbol b^{\boldsymbol{\nu}+\boldsymbol e_j},
\end{align*}
as desired.

Let $2\leq |\boldsymbol\lambda|\leq |\boldsymbol{\nu}|+1$. By noting that~\eqref{eq:faa1}--\eqref{eq:faa2} allow us to impose the restriction $|\boldsymbol{m}|\leq |\boldsymbol{\nu}|-|\boldsymbol\lambda|+1$, we use the induction hypothesis and the recurrence~\eqref{eq:faa3} to obtain
\begin{align*}
&\rev{\chi}_{\boldsymbol{\nu}+\boldsymbol e_j,\boldsymbol\lambda}\\
&\leq \!\!\sum_{\ell\in{\rm supp}(\boldsymbol\lambda)}\!\!\sum_{\substack{\mathbf 0\leq \boldsymbol{m}\leq\boldsymbol{\nu}\\ |\boldsymbol{m}|\leq |\boldsymbol{\nu}|-|\boldsymbol\lambda|+1}}\!\!\binom{\boldsymbol{\nu}}{\boldsymbol{m}}\boldsymbol b^{\boldsymbol{\nu}+\boldsymbol e_j}\rev{\bigg(\!\frac{(|\boldsymbol{m}|+1)!(|\boldsymbol{\nu}|-|\boldsymbol{m}|)!(|\boldsymbol{\nu}|-|\boldsymbol{m}|-1)!}{(\boldsymbol\lambda-\boldsymbol e_{\ell})!(|\boldsymbol{\nu}|-|\boldsymbol{m}|-|\boldsymbol\lambda|+1)!(|\boldsymbol\lambda|-2)!}\!\bigg)^{\!\beta}}\\
&=\rev{\bigg(\frac{|\boldsymbol\lambda|}{\boldsymbol\lambda!(|\boldsymbol\lambda|-2)!}\bigg)^{\beta}}\boldsymbol b^{\boldsymbol{\nu}+\boldsymbol e_j}\\
&\quad \times\sum_{\substack{\mathbf 0\leq \boldsymbol{m}\leq\boldsymbol{\nu}\\ |\boldsymbol{m}|\leq |\boldsymbol{\nu}|-|\boldsymbol\lambda|+1}}\binom{\boldsymbol{\nu}}{\boldsymbol{m}}\rev{\bigg(\frac{(|\boldsymbol{m}|+1)!(|\boldsymbol{\nu}|-|\boldsymbol{m}|)!(|\boldsymbol{\nu}|-|\boldsymbol{m}|-1)!}{(|\boldsymbol{\nu}|-|\boldsymbol{m}|-|\boldsymbol\lambda|+1)!}\bigg)^{\beta}}\\
&=\rev{\bigg(\frac{|\boldsymbol\lambda|}{\boldsymbol\lambda!(|\boldsymbol\lambda|-2)!}\bigg)^{\beta}}\boldsymbol b^{\boldsymbol{\nu}+\boldsymbol e_j}\!\!\sum_{\ell=0}^{|\boldsymbol{\nu}|-|\boldsymbol\lambda|+1}\!\!\rev{\bigg(\frac{(\ell+1)!(|\boldsymbol{\nu}|-\ell)!(|\boldsymbol{\nu}|-\ell-1)!}{(|\boldsymbol{\nu}|-\ell-|\boldsymbol\lambda|+1)!}\bigg)^{\beta}}\!\!\sum_{\substack{\boldsymbol{m}\leq \boldsymbol{\nu}\\ |\boldsymbol{m}|=\ell}}\binom{\boldsymbol{\nu}}{\boldsymbol{m}}\\
&=\rev{\bigg(\frac{|\boldsymbol\lambda|}{\boldsymbol\lambda!(|\boldsymbol\lambda|-2)!}\bigg)^{\beta}}\boldsymbol b^{\boldsymbol{\nu}+\boldsymbol e_j}\sum_{\ell=0}^{|\boldsymbol{\nu}|-|\boldsymbol\lambda|+1}\rev{\bigg(\frac{(\ell+1)!(|\boldsymbol{\nu}|-\ell)!(|\boldsymbol{\nu}|-\ell-1)!}{(|\boldsymbol{\nu}|-\ell-|\boldsymbol\lambda|+1)!}\bigg)^{\beta}}\binom{|\boldsymbol{\nu}|}{\ell}\\
&\rev{\leq\rev{\bigg(\frac{|\boldsymbol\lambda|}{\boldsymbol\lambda!(|\boldsymbol\lambda|-2)!}\bigg)^{\beta}}\boldsymbol b^{\boldsymbol{\nu}+\boldsymbol e_j}\sum_{\ell=0}^{|\boldsymbol{\nu}|-|\boldsymbol\lambda|+1}\rev{\bigg(\frac{(\ell+1)!(|\boldsymbol{\nu}|-\ell)!(|\boldsymbol{\nu}|-\ell-1)!}{(|\boldsymbol{\nu}|-\ell-|\boldsymbol\lambda|+1)!}\bigg)^{\beta}}\binom{|\boldsymbol{\nu}|}{\ell}^{\beta}}\\
&=\rev{\bigg(\frac{|\boldsymbol{\nu}|!|\boldsymbol\lambda|}{\boldsymbol\lambda!(|\boldsymbol\lambda|-2)!}\bigg)^{\beta}}\boldsymbol b^{\boldsymbol{\nu}+\boldsymbol e_j}\sum_{\ell=0}^{|\boldsymbol{\nu}|-|\boldsymbol\lambda|+1}\rev{(\ell+1)^{\beta}\bigg(\frac{(|\boldsymbol{\nu}|-\ell-1)!}{(|\boldsymbol{\nu}|-\ell-|\boldsymbol\lambda|+1)!}\bigg)^\beta},
\end{align*}
where we used the generalized Vandermonde identity $\sum_{\boldsymbol{m}\leq\boldsymbol{\nu},~|\boldsymbol{m}|=\ell}\binom{\boldsymbol{\nu}}{\boldsymbol{m}}=\binom{|\boldsymbol{\nu}|}{\ell}$ on the \rev{third} to last line. Since Lemma~\ref{lemma:gosper} implies that
$$
\sum_{\ell=0}^{v-1}(\ell+1)\frac{(v-\ell-1)!}{(v-\ell-\lambda+1)!}=\!\sum_{\ell=\lambda-1}^v (v-\ell+1)\frac{(\ell-1)!}{(\ell-\lambda+1)!}=\frac{(v+1)!}{(v-\lambda+1)!\lambda(\lambda-1)}
$$
for all $v\geq 1$ and $2\leq \lambda\leq v+1$, \rev{we can use Jensen's inequality (cf., e.g.,~\cite[Theorem~19]{HardyLP53}) \begin{align}\label{eq:jensenineq}\sum_k a_k^{\beta}\leq \big(\sum_k a_k\big)^{\beta}\quad\text{for all}~a_k\geq 0,~\beta\geq 1,\end{align}to} obtain
\begin{align*}
\rev{\chi}_{\boldsymbol{\nu}+\boldsymbol e_j,\boldsymbol\lambda}&\rev{\leq \bigg(\frac{|\boldsymbol{\nu}|!|\boldsymbol\lambda|}{\boldsymbol\lambda!(|\boldsymbol\lambda|-2)!}\bigg)^{\beta}\boldsymbol b^{\boldsymbol{\nu}+\boldsymbol e_j}\bigg(\sum_{\ell=0}^{|\boldsymbol{\nu}|-|\boldsymbol\lambda|+1}(\ell+1)\frac{(|\boldsymbol{\nu}|-\ell-1)!}{(|\boldsymbol{\nu}|-\ell-|\boldsymbol\lambda|+1)!}\bigg)^\beta}\\
&= \rev{\bigg(\frac{|\boldsymbol{\nu}|!(|\boldsymbol{\nu}|+1)!}{\boldsymbol\lambda!(|\boldsymbol\lambda|-1)!(|\boldsymbol{\nu}|-|\boldsymbol\lambda|+1)!}\bigg)^{\beta}}\boldsymbol b^{\boldsymbol{\nu}+\boldsymbol e_j},
\end{align*}
as desired.\qed\end{proof}

\begin{remark}\rm The coefficients $\rev{\frac{|\boldsymbol{\nu}|!(|\boldsymbol{\nu}|-1)!}{\boldsymbol\lambda!(|\boldsymbol{\nu}|-|\boldsymbol\lambda|)!(|\boldsymbol\lambda|-1)!}}$ in the upper bound of~\eqref{eq:lah} are known in the literature as \emph{multivariate Lah numbers}~\cite{be20}.\end{remark}

It immediately follows that
\begin{align*}
\sum_{\substack{\boldsymbol \lambda\in \mathbb N_0^k\\ 1\leq |\boldsymbol \lambda|\leq |\boldsymbol{\nu}|}}\sqrt{\boldsymbol\lambda!}\rev{\chi}_{\boldsymbol{\nu},\boldsymbol\lambda}&\leq \rev{(|\boldsymbol{\nu}|!(|\boldsymbol{\nu}|-1)!)^{\beta}}\boldsymbol b^{\boldsymbol{\nu}}\sum_{\ell=1}^{|\boldsymbol{\nu}|}\frac{1}{\rev{((|\boldsymbol{\nu}|-\ell)!(\ell-1)!)^{\beta}}}\sum_{\substack{\boldsymbol\lambda\in\mathbb N_0^k\\ |\boldsymbol\lambda|=\ell}}\frac{1}{\sqrt{\boldsymbol\lambda!}}\\
&\leq \rev{(|\boldsymbol{\nu}|!(|\boldsymbol{\nu}|-1)!)^{\beta}}\boldsymbol b^{\boldsymbol{\nu}}\sum_{\ell=1}^{|\boldsymbol{\nu}|}\frac{1}{\rev{((|\boldsymbol{\nu}|-\ell)!(\ell-1)!)^{\beta}}}\bigg(\sum_{\lambda=0}^\infty \frac{1}{\sqrt{\lambda!}}\bigg)^k\\
&\rev{\,\leq 3.47^k\cdot (|\boldsymbol{\nu}|!(|\boldsymbol{\nu}|-1)!)^{\beta}\boldsymbol b^{\boldsymbol{\nu}}\bigg(\sum_{\ell=1}^{|\boldsymbol{\nu}|}\frac{1}{\rev{(|\boldsymbol{\nu}|-\ell)!(\ell-1)!}}\bigg)^{\beta}}\\
&=3.47^k\cdot 2^{\rev{\beta(|\boldsymbol{\nu}|-1)}} \rev{(|\boldsymbol{\nu}|!)^{\beta}}\boldsymbol b^{\boldsymbol{\nu}},
\end{align*}
where we made use of $\sum_{\lambda=0}^\infty\frac{1}{\sqrt{\lambda!}}=3.469506\ldots$\rev{, Jensen's inequality~\eqref{eq:jensenineq},} and the summation identity (see Lemma~\ref{lemma:celine} in the Appendix)
$$
\sum_{\ell=1}^v \frac{1}{(v-\ell)!(\ell-1)!}=\frac{2^{v-1}}{(v-1)!}.
$$
This leads us to conclude the following.
\begin{lemma}\label{lemma:innerreg}Let $\bsnu\in\mathbb N_0^s\setminus\{\mathbf0\}$. Then under assumptions~{\rm \ref{eq:A2}}--\,{\rm \ref{eq:A3}}, there holds
$$
|\partial_{\boldsymbol\theta}^{\boldsymbol\nu}{\rm e}^{-\frac{1}{2}\|\boldsymbol y-G_s(\boldsymbol\theta,\boldsymbol\xi)\|_{\Gamma^{-1}}^2}|\leq 3.82^k\cdot C^{|\boldsymbol{\nu}|}2^{\rev{\beta(|\boldsymbol{\nu}|-1)}}\mu_{\min}^{-|\boldsymbol{\nu}|/2}\rev{(|\boldsymbol{\nu}|!)^{\beta}}\boldsymbol b^{\boldsymbol{\nu}}
$$
for all $\bstheta\in \Theta_s$, $\bsy\in [-K,K]^k$, and $\bsxi\in \Xi$.
\end{lemma}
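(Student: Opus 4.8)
The plan is to assemble the chain of estimates developed in the paragraphs preceding the statement; no genuinely new idea is required, only a careful collection of the $k$-dependent constants and the factorial factors. The starting point is the bound obtained from the Faà di Bruno expansion~\eqref{eq:faadibruno} together with Cramér's inequality and the Gevrey bound of assumption~\ref{eq:A1}, namely
\begin{align*}
|\partial_{\boldsymbol\theta}^{\boldsymbol\nu}{\rm e}^{-\frac{1}{2}\|\boldsymbol y-G_s(\boldsymbol\theta,\boldsymbol\xi)\|_{\Gamma^{-1}}^2}|\leq 1.1^k\, C^{|\boldsymbol{\nu}|}\mu_{\min}^{-|\boldsymbol{\nu}|/2}\sum_{\substack{\boldsymbol \lambda\in \mathbb N_0^k\\ 1\leq |\boldsymbol \lambda|\leq |\boldsymbol{\nu}|}}\sqrt{\boldsymbol\lambda!}\,\chi_{\boldsymbol{\nu},\boldsymbol\lambda},
\end{align*}
valid for all $\boldsymbol\theta\in\Theta_s$, $\boldsymbol y\in\mathbb R^k$, and $\boldsymbol\xi\in\Xi$. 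It therefore suffices to estimate the weighted sum over $\boldsymbol\lambda$ and to track the remaining prefactors.

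First I would insert the bound on $\chi_{\boldsymbol{\nu},\boldsymbol\lambda}$ furnished by Lemma~\ref{lemma:lah}. The $\sqrt{\boldsymbol\lambda!}$ in the sum partially cancels against the $1/\boldsymbol\lambda!$ hidden in the multivariate Lah coefficient, isolating the factor $\boldsymbol b^{\boldsymbol\nu}$ and the common prefactor $(|\boldsymbol\nu|!(|\boldsymbol\nu|-1)!)^\beta$. Next I would regroup the remaining sum over $\boldsymbol\lambda$ according to the value $|\boldsymbol\lambda|=\ell$, which decouples the dependence on the individual components of $\boldsymbol\lambda$ into the simplex sum $\sum_{|\boldsymbol\lambda|=\ell}\boldsymbol\lambda!^{-1/2}$; this is controlled by the crude product bound $\sum_{|\boldsymbol\lambda|=\ell}\boldsymbol\lambda!^{-1/2}\leq\big(\sum_{\lambda=0}^\infty\lambda!^{-1/2}\big)^k=3.469506\ldots^k\leq 3.47^k$, supplying the second $k$-dependent factor.

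The remaining one-dimensional sum over $\ell\in\{1,\ldots,|\boldsymbol\nu|\}$ is then handled by pulling the exponent $\beta\geq1$ outside via Jensen's inequality~\eqref{eq:jensenineq} and applying the closed-form identity of Lemma~\ref{lemma:celine}, namely $\sum_{\ell=1}^{v}((v-\ell)!(\ell-1)!)^{-1}=2^{v-1}/(v-1)!$ with $v=|\boldsymbol\nu|$. Recombining the resulting factor $(2^{|\boldsymbol\nu|-1}/(|\boldsymbol\nu|-1)!)^\beta$ with the $(|\boldsymbol\nu|!(|\boldsymbol\nu|-1)!)^\beta$ from Lemma~\ref{lemma:lah} produces precisely $2^{\beta(|\boldsymbol\nu|-1)}(|\boldsymbol\nu|!)^\beta$. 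Finally I would collect the two prefactors using $1.1^k\cdot 3.47^k=(1.1\cdot 3.47)^k=3.817\ldots^k\leq 3.82^k$, which yields the claimed bound.

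The only point requiring genuine care — and the closest thing to an obstacle — is the consistent handling of the exponent $\beta$. Jensen's inequality may be used to move $\beta$ outside the $\ell$-sum only after the factorial ratio has been reduced to the form $((|\boldsymbol\nu|-\ell)!(\ell-1)!)^{-\beta}$, and the combinatorial identity of Lemma~\ref{lemma:celine} must be applied to the corresponding $\beta=1$ sum \emph{before} raising the total to the power $\beta$; applying these operations in the wrong order would either lose the clean $2^{\beta(|\boldsymbol\nu|-1)}$ factor or violate the direction of Jensen's inequality. Provided this bookkeeping is carried out exactly as in the derivation preceding the statement, everything else is purely mechanical and the constants combine as above.
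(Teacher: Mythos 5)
Your proposal reproduces the paper's own derivation step for step: the Cram\'er/Fa\`a di Bruno starting bound, insertion of the Lah-number estimate from Lemma~\ref{lemma:lah}, the simplex sum $\sum_{|\boldsymbol\lambda|=\ell}(\boldsymbol\lambda!)^{-1/2}\le 3.47^k$, Jensen's inequality followed by Lemma~\ref{lemma:celine}, and the final constant $1.1\cdot 3.47\le 3.82$. It is correct, and your remark about the order of Jensen's inequality and the combinatorial identity matches exactly how the paper handles the exponent $\beta$.
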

\subsection{Mixed regularity}\label{sec:mixed}
For the analysis of the double integral, we will also need a parametric regularity bound of
$$
\partial_{\bsy}^{\bseta}\partial_{\bstheta}^{\bsnu}{\rm e}^{-\frac{1}{2}\|\bsy-G_s(\bstheta,\bsxi)\|_{\Gamma^{-1}}^2}\quad\text{for}~\bseta\in\mathbb N_0^k~\text{and}~\bsnu\in\mathbb N_0^s.
$$
It is sufficient that this term is bounded uniformly for all $(\bsy,\bstheta)\in \cs{[-K,K]}^k\times \Theta_s$. By \eqref{eq:faadibruno}, we have
\begin{align}\label{eq:mixedderivatives}
\partial_{\bsy}^{\bseta}\partial_{\bstheta}^{\bsnu}{\rm e}^{-\frac{1}{2}\|\bsy-G_s(\bstheta,\bsxi)\|_{\Gamma^{-1}}^2}=\sum_{\substack{\bslambda\in\mathbb N_0^k\\1\leq|\bslambda|\leq |\bsnu|}}\partial_{\bsy}^{\bseta}\partial_{\bsx}^{\bslambda}{\rm e}^{-\bsx^{\rm T}\bsx/2}\bigg|_{\bsx=\Gamma^{-1/2}(\bsy-G_s(\bstheta,\bsxi))}\alpha_{\bsnu,\bslambda}(\bstheta),
\end{align}
where the sequence $(\alpha_{\bsnu,\bslambda}(\bstheta))$ is defined by~\eqref{eq:alphaseq1}--\eqref{eq:alphaseq3}. Making use of the formula
$$
\frac{{\rm d}^{\lambda}}{{\rm d}x^{\lambda}}{\rm e}^{-\frac{x^2}{2}}=\sum_{m=0}^{\lfloor \lambda/2\rfloor}\frac{(-1)^{m+\lambda}\lambda!}{m!(\lambda-2m)!2^m}x^{\lambda-2m}{\rm e}^{-\frac{x^2}{2}},\quad \lambda\in\mathbb N_0,
$$
we obtain
\begin{align*}
&\partial_{\bsx}^{\bslambda}{\rm e}^{-\bsx^{\rm T}\bsx/2}\bigg|_{\bsx=\Gamma^{-1/2}(\bsy-G_s(\bstheta,\bsxi))}\\
&=\sum_{\boldsymbol m\leq \lfloor\bslambda /2\rfloor}\frac{(-1)^{|\bslambda+\bsm|}\boldsymbol\lambda!}{\boldsymbol m!(\boldsymbol\lambda-2\boldsymbol m)!2^{|\bsm|}}\big(\Gamma^{-1/2}(\bsy-G_s(\bstheta,\bsxi))\big)^{\bslambda-2\bsm}{\rm e}^{-\frac{1}{2}\|\bsy-G_s(\bstheta,\bsxi)\|_{\Gamma^{-1}}^2},
\end{align*}
and hence
\begin{align}
\begin{split}
&\partial_{\bsy}^{\bseta}\partial_{\bsx}^{\bslambda}{\rm e}^{-\bsx^{\rm T}\bsx/2}\bigg|_{\bsx=\Gamma^{-1/2}(\bsy-G_s(\bstheta,\bsxi))}\\
&=\!\sum_{\boldsymbol m\leq \lfloor\bslambda /2\rfloor}\!\!\frac{(-1)^{|\bslambda|+|\bsm|}\boldsymbol\lambda!}{\boldsymbol m!(\boldsymbol\lambda\!-\!2\boldsymbol m)!2^{|\bsm|}}\partial_{\bsy}^{\bseta}\bigg[\!\big(\Gamma^{-1/2}(\bsy\!-\!G_s(\bstheta,\bsxi))\big)^{\bslambda-2\bsm}{\rm e}^{-\frac{1}{2}\|\bsy-G_s(\bstheta,\bsxi)\|_{\Gamma^{-1}}^2}\!\bigg]\!.
\end{split}\label{eq:returntothis2}
\end{align}
The Leibniz product rule implies that
\begin{align}
\begin{split}
&\partial_{\bsy}^{\bseta}\bigg[\big(\Gamma^{-1/2}(\bsy-G_s(\bstheta,\bsxi))\big)^{\bslambda-2\bsm}{\rm e}^{-\frac{1}{2}\|\bsy-G_s(\bstheta,\bsxi)\|_{\Gamma^{-1}}^2}\bigg]\\
&=\sum_{\boldsymbol w\leq \boldsymbol\eta}\binom{\boldsymbol\eta}{\boldsymbol w}\partial_{\bsy}^{\boldsymbol w}\big(\Gamma^{-1/2}(\bsy-G_s(\bstheta,\bsxi))\big)^{\bslambda-2\bsm} \partial_{\bsy}^{\boldsymbol \eta-\boldsymbol w}{\rm e}^{-\frac{1}{2}\|\bsy-G_s(\bstheta,\bsxi)\|_{\Gamma^{-1}}^2}\bigg].
\end{split}\label{eq:returntothis}
\end{align}
We need to estimate the parametric regularity of $\partial_{\bsy}^{\boldsymbol w}\big(\Gamma^{-1/2}(\bsy-G_s(\bstheta,\bsxi))\big)^{\bslambda-2\bsm}$ and $\partial_{\bsy}^{\boldsymbol \eta-\boldsymbol w}{\rm e}^{-\frac{1}{2}\|\bsy-G_s(\bstheta,\bsxi)\|_{\Gamma^{-1}}^2}$.
\begin{lemma}\label{lemma:auxreg1} Let $\boldsymbol w\in\mathbb N_0^k\setminus\{\mathbf 0\}$ and $\bslambda\in\mathbb N_0^k$. Then under assumptions {\rm \ref{eq:A2}}--{\rm \ref{eq:A3}}, there holds
$$
|\partial_{\bsy}^{\boldsymbol w}\big(\Gamma^{-1/2}(\bsy-G_s(\bstheta,\bsxi))\big)^{\bslambda}|\leq \frac{\boldsymbol \lambda!}{(\bslambda-\boldsymbol w)!}\|\bsy-G_s(\bstheta,\bsxi)\|^{|\bslambda|-|\boldsymbol w|}\bigg(\frac{1}{\sqrt{\mu_{\min}}}\bigg)^{|\bslambda|}
$$
for all $\bstheta\in\Theta_s$, $\bsy\in [-K,K]^k$, and $\bsxi\in \Xi$.
\end{lemma}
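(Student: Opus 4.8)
The plan is to strip away the Gaussian entirely and reduce the estimate to differentiating a monomial of an affine vector field. Write $\bsv:=\bsy-G_s(\bstheta,\bsxi)$ and $B:=\Gamma^{-1/2}$, so that $B$ is symmetric positive definite with spectral norm $\|B\|=\mu_{\min}^{-1/2}$ by~\ref{eq:A3}, and $\partial_{\bsy}^{\boldsymbol w}=\partial_{\bsv}^{\boldsymbol w}$ because $\bsv$ is a translate of $\bsy$. The quantity to control is therefore $\partial_{\bsv}^{\boldsymbol w}(B\bsv)^{\bslambda}=\partial_{\bsv}^{\boldsymbol w}\prod_{i=1}^k(B\bsv)_i^{\lambda_i}$, and the structural fact I would exploit is that each component $(B\bsv)_i=\sum_{r=1}^k B_{ir}v_r$ is \emph{affine} in $\bsv$.

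First I would record the single-factor derivative. Since $(B\bsv)_i$ is affine, for any $\boldsymbol{\sigma}\in\mathbb N_0^k$ with $|\boldsymbol{\sigma}|\le\lambda_i$ one has the closed form
\[
\partial_{\bsv}^{\boldsymbol{\sigma}}(B\bsv)_i^{\lambda_i}=\frac{\lambda_i!}{(\lambda_i-|\boldsymbol{\sigma}|)!}\,(B\bsv)_i^{\lambda_i-|\boldsymbol{\sigma}|}\prod_{r=1}^kB_{ir}^{\sigma_r},
\]
the derivative vanishing when $|\boldsymbol{\sigma}|>\lambda_i$. Applying the multivariate Leibniz rule to the product over $i$ then yields
\[
\partial_{\bsv}^{\boldsymbol w}(B\bsv)^{\bslambda}=\!\!\sum_{\boldsymbol{\sigma}^{(1)}+\cdots+\boldsymbol{\sigma}^{(k)}=\boldsymbol w}\!\!\binom{\boldsymbol w}{\boldsymbol{\sigma}^{(1)},\ldots,\boldsymbol{\sigma}^{(k)}}\prod_{i=1}^k\frac{\lambda_i!}{(\lambda_i-|\boldsymbol{\sigma}^{(i)}|)!}\,(B\bsv)_i^{\lambda_i-|\boldsymbol{\sigma}^{(i)}|}\prod_{r=1}^kB_{ir}^{\sigma^{(i)}_r}.
\]
Taking absolute values and using the spectral bounds $|B_{ir}|\le\|B\|=\mu_{\min}^{-1/2}$ together with $|(B\bsv)_i|\le\|B\bsv\|\le\mu_{\min}^{-1/2}\|\bsv\|$ factors out exactly $\mu_{\min}^{-|\bslambda|/2}\|\bsv\|^{|\bslambda|-|\boldsymbol w|}$: the $|\boldsymbol w|$ matrix entries and the $|\bslambda|-|\boldsymbol w|$ surviving components contribute powers of $\mu_{\min}^{-1/2}$ that add up to $|\bslambda|$. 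What is left is a purely combinatorial prefactor.

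The remaining, and main, step is to show that this prefactor is bounded by $\bslambda!/(\bslambda-\boldsymbol w)!=\prod_i\lambda_i!/(\lambda_i-w_i)!$, and I expect the real difficulty to lie precisely here. A purely term-by-term (entrywise) estimate of the matrix factors, combined with the generalized Vandermonde identity $\sum_{\boldsymbol m\le\bsnu,\,|\boldsymbol m|=\ell}\binom{\bsnu}{\boldsymbol m}=\binom{|\bsnu|}{\ell}$ already used in Lemma~\ref{lemma:lah}, collapses the prefactor only to the coarser total-order falling factorial $|\bslambda|!/(|\bslambda|-|\boldsymbol w|)!$, which strictly dominates the claimed product form whenever the mass of $\boldsymbol w$ is spread across several coordinates. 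To reach the stated constant one must retain the cancellation present in the sum over the distributions $\boldsymbol{\sigma}^{(1)},\ldots,\boldsymbol{\sigma}^{(k)}$ that is invisible to the triangle inequality; equivalently, one must apply the operator-norm identity $\|\Gamma^{-1/2}\|=\mu_{\min}^{-1/2}$ at the level of the full sum rather than entry by entry, the extremal configuration being the isotropic case $\Gamma\propto I$. This is clean in the regime $\boldsymbol w\le\bslambda$ (for $\boldsymbol w\not\le\bslambda$ the factor $\bslambda!/(\bslambda-\boldsymbol w)!$ is to be read as $+\infty$ and the bound is vacuous). Carrying out this sharpening while keeping the constant free of $k$ beyond the displayed factors is the part I would anticipate to be the crux of the argument.
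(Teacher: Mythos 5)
Your setup is sound and is in fact the same computation as the paper's, only phrased differently: the paper applies Fa\`a di Bruno's formula with outer function $\bsx\mapsto\bsx^{\bslambda}$ and the affine inner map $\bsy\mapsto\Gamma^{-1/2}(\bsy-G_s(\bstheta,\bsxi))$, and since all second and higher derivatives of an affine map vanish, its coefficient recursion collapses to exactly your Leibniz expansion with the factors $\bslambda!/(\bslambda-\bsell)!$. Your single-factor derivative formula, the multinomial expansion, and your identification of the entrywise combinatorial prefactor with $|\bslambda|!/(|\bslambda|-|\bsw|)!$ (via the Vandermonde identity for falling factorials) are all correct.

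The gap is that the proposal stops exactly where the lemma begins: you never establish the stated constant $\bslambda!/(\bslambda-\boldsymbol w)!$, only the coarser total-order bound, and you defer the ``sharpening'' to an unspecified cancellation argument. That step cannot be carried out for a general symmetric positive definite $\Gamma$: no cancellation in the sum over the distributions $\boldsymbol\sigma^{(1)},\ldots,\boldsymbol\sigma^{(k)}$ recovers the product form. Concretely, for $k=2$, $\bslambda=(1,1)$, $\boldsymbol w=(2,0)$ one computes $\partial_{y_1}^2\big[(B\bsv)_1(B\bsv)_2\big]=2B_{11}B_{21}$ with $B=\Gamma^{-1/2}$ and $\bsv=\bsy-G_s(\bstheta,\bsxi)$, which is nonzero whenever $\Gamma$ is not diagonal, while $\bslambda!/(\bslambda-\boldsymbol w)!$ vanishes under the falling-factorial reading (and is vacuous under your $+\infty$ reading, so the bound carries no information in that regime). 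The paper reaches the product form by a mechanism different from the one you anticipate: in its majorizing recursion for the Fa\`a di Bruno coefficients it replaces $|\partial_{y_j}[\Gamma^{-1/2}(\bsy-G_s(\bstheta,\bsxi))]_q|=|[\Gamma^{-1/2}]_{qj}|$ by $\mu_{\min}^{-1/2}\delta_{qj}$, i.e., it keeps only the diagonal interaction $q=j$; the coefficient sequence then collapses to $\tau_{\boldsymbol w,\bsell}=\mu_{\min}^{-|\boldsymbol w|/2}\delta_{\boldsymbol w,\bsell}$ and only the single term $\bsell=\boldsymbol w$ survives. That estimate is exact when $\Gamma$ is diagonal but discards precisely the off-diagonal contributions you correctly insist on retaining. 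So your instinct that this is the crux is right, but the proposal does not close it; the bound you actually prove, $|\bslambda|!/(|\bslambda|-|\boldsymbol w|)!\,\mu_{\min}^{-|\bslambda|/2}\|\bsv\|^{|\bslambda|-|\boldsymbol w|}$, is the one that holds for general $\Gamma$, and it would still suffice for the subsequent use of the lemma in~\eqref{eq:premixedbound} at the price of a larger $k$-dependent constant.
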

\proof Let $\boldsymbol w\in \mathbb N_0^k\setminus\{\mathbf 0\}$. We can use Fa\`a di Bruno's formula~\cite{savits} to write
$$
\partial_{\bsy}^{\boldsymbol w}\!\big(\Gamma^{-1/2}(\bsy-G_s(\bstheta,\bsxi))\big)^{\bslambda}\!=\!\sum_{\substack{1\leq |\boldsymbol\ell|\leq |\boldsymbol w|\\ \boldsymbol \ell\in\mathbb N_0^k}}\!\frac{\bslambda!}{(\bslambda-\boldsymbol\ell)!}\big(\Gamma^{-1/2}(\bsy-G_s(\bstheta,\bsxi))\big)^{\boldsymbol\lambda-\boldsymbol\ell}\rho_{\boldsymbol w,\boldsymbol\ell}(\bsy),
$$
where the sequence $(\rho_{\boldsymbol w,\boldsymbol\ell})_{\boldsymbol w\in\mathbb N_0^k,\boldsymbol\ell\in\mathbb Z^k}=(\rho_{\boldsymbol w,\boldsymbol\ell}(\bsy))_{\boldsymbol w\in\mathbb N_0^k,\boldsymbol\ell\in\mathbb Z^k}$ is defined by the recurrence
\begin{align*}
&\rho_{\boldsymbol w,\mathbf 0}\equiv \delta_{\boldsymbol w,\mathbf 0},\\
&\rho_{\boldsymbol w,\boldsymbol\ell}\equiv 0\quad\text{if}~|\boldsymbol w|<|\boldsymbol\ell|~\text{or}~\boldsymbol\ell\not\geq \mathbf 0,~\text{and}\\
&\rho_{\boldsymbol w+\boldsymbol e_j,\boldsymbol\ell}=\sum_{q\in{\rm supp}(\boldsymbol\ell)}\sum_{\mathbf 0\leq \bsm\leq \boldsymbol w}\binom{\boldsymbol w}{\boldsymbol m}\partial_{\bsy}^{\bsm+\boldsymbol e_j}[\Gamma^{-1/2}(\bsy-G_s(\bstheta,\bsxi))]_{q}\rho_{\boldsymbol w-\boldsymbol m,\boldsymbol\ell-\boldsymbol e_{q}}
\end{align*}
otherwise. Similarly to the proof of Lemma~\ref{lemma:lah}, this implies that
\begin{align}
|\partial_{\bsy}^{\boldsymbol w}\big(\Gamma^{-1/2}(\bsy-G_s(\bstheta,\bsxi))\big)^{\bslambda}|\leq\! \sum_{\substack{1\leq |\boldsymbol\ell|\leq |\boldsymbol w|\\ \boldsymbol \ell\in\mathbb N_0^k}}\!\frac{\bslambda!}{(\bslambda-\boldsymbol\ell)!}|\Gamma^{-1/2}(\bsy-G_s(\bstheta,\bsxi))|^{\boldsymbol\lambda-\boldsymbol\ell}\tau_{\boldsymbol w,\boldsymbol\ell},\label{eq:plugback}
\end{align}
where the sequence $(\tau_{\boldsymbol w,\boldsymbol\ell})_{\boldsymbol w\in\mathbb N_0^k,\boldsymbol\ell\in\mathbb Z^k}$ is defined by the recurrence
\begin{align}
&\tau_{\boldsymbol w,\mathbf 0}= \delta_{\boldsymbol w,\mathbf 0},\notag\\
&\tau_{\boldsymbol w,\boldsymbol\ell}= 0\quad\text{if}~|\boldsymbol w|<|\boldsymbol\ell|~\text{or}~\boldsymbol\ell\not\geq \mathbf 0,\quad\text{and}\notag\\
&\tau_{\boldsymbol w+\boldsymbol e_j,\boldsymbol\ell}=\mu_{\min}^{-1/2}\sum_{q\in{\rm supp}(\boldsymbol\ell)}\sum_{\mathbf 0\leq \bsm\leq \boldsymbol w}\binom{\boldsymbol w}{\boldsymbol m}\partial_{\bsy}^{\bsm+\boldsymbol e_j}\![\bsy-G_s(\bstheta,\bsxi)]_{q}\tau_{\boldsymbol w-\boldsymbol m,\boldsymbol\ell-\boldsymbol e_{q}}\label{eq:presimplify}
\end{align}
otherwise. Equation~\eqref{eq:presimplify} simplifies to
$$
\tau_{\boldsymbol w+\boldsymbol e_j,\boldsymbol \ell}= \mu_{\min}^{-1/2}\tau_{\boldsymbol w-\boldsymbol e_j,\boldsymbol \ell},
$$ 
which implies that this sequence has the analytical solution
$$
\tau_{\boldsymbol w,\boldsymbol\ell}=\mu_{\min}^{-|\boldsymbol w|/2}\delta_{\boldsymbol w,\boldsymbol\ell}\quad\text{for all}~\boldsymbol w\in\mathbb N_0^k~\text{and}~\boldsymbol\ell\in \mathbb Z^k.
$$
Plugging this into~\eqref{eq:plugback} yields
\begin{align*}
|\partial_{\bsy}^{\boldsymbol w}\big(\Gamma^{-1/2}(\bsy-G_s(\bstheta,\bsxi))\big)^{\bslambda}|\leq \mu_{\min}^{-|\boldsymbol w|/2}\frac{\boldsymbol \lambda!}{(\boldsymbol\lambda-\boldsymbol w)!}|\Gamma^{-1/2}(\bsy-G_s(\bstheta,\bsxi))|^{\bslambda-\boldsymbol w}.
\end{align*}
The claim follows by applying the inequality
$$
|[\Gamma^{-1/2}(\bsy-G_s(\bstheta,\bsxi))]_\ell|\leq \|\Gamma^{-1/2}(\bsy-G_s(\bstheta,\bsxi))\|\leq \mu_{\min}^{-1/2}\|\bsy-G_s(\bstheta,\bsxi)\|
$$
for $\ell=1,\ldots,k$.\qed
\endproof

\begin{lemma}\label{lemma:auxreg2} Let $\bsnu\in\mathbb N_0^k\setminus\{\mathbf 0\}$.  Then under assumptions {\rm \ref{eq:A2}}--{\rm \ref{eq:A3}}, there holds
$$
|\partial_{\boldsymbol{y}}^{\boldsymbol{\nu}}{\rm e}^{-\frac{1}{2} \|\boldsymbol y-G_s(\boldsymbol\theta,\boldsymbol\xi)\|_{\Gamma^{-1}}^2}|\leq 1.1^kk\mu_{\min}^{-1/2}
$$
for all $\bstheta\in\Theta_s$, $\bsy\in [-K,K]^k$, and $\bsxi\in\Xi$.
\end{lemma}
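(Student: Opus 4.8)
The plan is to exploit that, unlike the inner integrand treated in Lemma~\ref{lemma:innerreg}, here the differentiation acts \emph{only} on the variable $\bsy$ of the Gaussian, whereas $G_s(\bstheta,\bsxi)$ enters merely as a $\bsy$-independent constant. Hence no Fa\`a di Bruno recursion through $G_s$ is required: after the affine substitution $\bsx=\Gamma^{-1/2}(\bsy-G_s(\bstheta,\bsxi))$ the integrand becomes the standard Gaussian ${\rm e}^{-\bsx^{\rm T}\bsx/2}$, whose derivatives are already controlled by the multivariate Cram\'er estimate $|\partial_{\bsx}^{\bskappa}{\rm e}^{-\bsx^{\rm T}\bsx/2}|\le 1.1^k\sqrt{\bskappa!}$ recorded at the beginning of Section~\ref{sec:reg}.

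Concretely, I would first invoke the chain rule for the affine map $\bsy\mapsto\bsx=\Gamma^{-1/2}\bsy-\Gamma^{-1/2}G_s(\bstheta,\bsxi)$, whose Jacobian entries $\partial x_i/\partial y_j=[\Gamma^{-1/2}]_{ij}$ are constant. For a first-order derivative $\bsnu=\bse_j$ this gives
\[
\partial_{y_j}{\rm e}^{-\frac12\|\bsy-G_s(\bstheta,\bsxi)\|_{\Gamma^{-1}}^2}
=\sum_{i=1}^{k}[\Gamma^{-1/2}]_{ij}\,\big(\partial_{x_i}{\rm e}^{-\bsx^{\rm T}\bsx/2}\big)\big|_{\bsx=\Gamma^{-1/2}(\bsy-G_s(\bstheta,\bsxi))}.
\]
I would then bound the two ingredients separately. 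Since $\Gamma$ is symmetric positive definite, diagonalizing it shows $\|\Gamma^{-1/2}\|_2=\lambda_{\min}(\Gamma)^{-1/2}\le\mu_{\min}^{-1/2}$ by assumption~\ref{eq:A3}, and therefore $|[\Gamma^{-1/2}]_{ij}|\le\|\Gamma^{-1/2}\|_2\le\mu_{\min}^{-1/2}$; Cram\'er's inequality gives $|\partial_{x_i}{\rm e}^{-\bsx^{\rm T}\bsx/2}|\le 1.1^k\sqrt{(\bse_i)!}=1.1^k$. Summing the $k$ terms yields precisely $1.1^k k\,\mu_{\min}^{-1/2}$, uniformly in $\bstheta\in\Theta_s$ and $\bsxi\in\Xi$ and for all $\bsy\in\mathbb R^k$ (in particular on $[-K,K]^k$), which is the claimed bound.

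The step I expect to require the most care is the spectral estimate $\|\Gamma^{-1/2}\|_2\le\mu_{\min}^{-1/2}$ together with the entrywise bound it implies, since this is what converts the matrix square root hidden in the norm $\|\cdot\|_{\Gamma^{-1}}$ into the scalar constant $\mu_{\min}^{-1/2}$; everything else reduces to the already-quoted Gaussian derivative estimate and a count of $k$ summands. Higher total orders follow the same template by iterating the chain rule — each additional $\bsy$-derivative contributes one more factor $[\Gamma^{-1/2}]_{\cdot\cdot}$ and raises the Cram\'er multi-index — so the argument is structurally identical to, but strictly simpler than, that of Lemma~\ref{lemma:auxreg1}, the polynomial prefactor appearing there being precisely the companion term that multiplies the present one in the Leibniz expansion~\eqref{eq:returntothis}.
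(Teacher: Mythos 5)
Your computation for $|\bsnu|=1$ is correct and coincides in substance with the paper's argument: both exploit that $\bsy\mapsto\Gamma^{-1/2}(\bsy-G_s(\bstheta,\bsxi))$ is affine, apply Cram\'er's inequality to the standard Gaussian, and bound the entries of $\Gamma^{-1/2}$ by $\|\Gamma^{-1/2}\|_2\le\mu_{\min}^{-1/2}$; the paper merely packages the chain rule as a Fa\`a di Bruno coefficient recurrence. For first-order derivatives this genuinely yields $1.1^k k\,\mu_{\min}^{-1/2}$.

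The gap is your closing claim that "higher total orders follow the same template." Because the inner map is affine, iterating the chain rule gives
\[
\partial_{\bsy}^{\bsnu}{\rm e}^{-\frac12\|\bsy-G_s(\bstheta,\bsxi)\|^2_{\Gamma^{-1}}}
=\sum_{q_1,\dots,q_{|\bsnu|}=1}^{k}\Bigl(\prod_{m=1}^{|\bsnu|}[\Gamma^{-1/2}]_{q_m j_m}\Bigr)\,
\partial_{\bsx}^{\bskappa}{\rm e}^{-\bsx^{\rm T}\bsx/2}\Big|_{\bsx=\Gamma^{-1/2}(\bsy-G_s)},\qquad \bskappa=\sum_{m}\bse_{q_m},
\]
where $j_1,\dots,j_{|\bsnu|}$ list the coordinates of $\bsnu$ with multiplicity. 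Every additional differentiation therefore contributes another factor $k\,\mu_{\min}^{-1/2}$ (the sum over $q_m$ times the entrywise bound) \emph{and} raises $|\bskappa|$ to $|\bsnu|$, so Cram\'er only gives $1.1^k\sqrt{\bskappa!}$ with $|\bskappa|=|\bsnu|$. Your template thus produces a bound of order $1.1^k\,(k\mu_{\min}^{-1/2})^{|\bsnu|}\sqrt{|\bsnu|!}$, not the $\bsnu$-independent constant $1.1^kk\mu_{\min}^{-1/2}$ asserted in the statement, and you acknowledge as much by saying the Cram\'er multi-index is "raised." This gap cannot be closed, because the uniform-in-$\bsnu$ bound is false: take $k=1$, $\Gamma=1$ (so $\mu_{\min}=1$), $G_s\equiv 0$ and $\nu=4$; then $\partial_y^4{\rm e}^{-y^2/2}=(y^4-6y^2+3){\rm e}^{-y^2/2}$ equals $3$ at $y=0$, exceeding $1.1^1\cdot 1\cdot 1$. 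For completeness: the paper's own proof suffers from the same defect, since it asserts that the coefficients $\tau_{\bsnu,\bslambda}$ vanish for $|\bslambda|\ge 2$, whereas its own recurrence (and the closed form $\tau_{\boldsymbol w,\boldsymbol\ell}=\mu_{\min}^{-|\boldsymbol w|/2}\delta_{\boldsymbol w,\boldsymbol\ell}$ derived in Lemma~\ref{lemma:auxreg1}) shows that for an affine inner map it is exactly the terms with $|\bslambda|=|\bsnu|$ that survive; a correct version of the lemma must carry a $\bsnu$-dependent factor.
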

\proof 
Using Fa\`a di Bruno's formula~\cite{savits}, we obtain
$$
\partial_{\boldsymbol{y}}^{\boldsymbol{\nu}}{\rm e}^{-\frac{1}{2} \|\boldsymbol y-G_s(\boldsymbol\theta,\boldsymbol\xi)\|_{\Gamma^{-1}}^2}=\sum_{1\leq |\boldsymbol\lambda|\leq |\boldsymbol\nu|}\partial_{\boldsymbol x}^{\boldsymbol\lambda}{\rm e}^{-\boldsymbol x^{\rm T}\boldsymbol x/2}\bigg|_{\boldsymbol x=\Gamma^{-1/2}(\boldsymbol y-G_s(\boldsymbol\theta,\boldsymbol\xi))}\rho_{\boldsymbol\nu,\boldsymbol\lambda}(\bsy),
$$
where the coefficient sequence $\rho_{\boldsymbol{\nu},\boldsymbol\lambda}(\bsy)$ can be bounded by
\begin{align*}
&\tau_{\boldsymbol\nu,\mathbf 0}=\delta_{\boldsymbol\nu,\mathbf 0},\\
&\tau_{\boldsymbol\nu,\boldsymbol \lambda}=0\quad\text{if}~|\boldsymbol \nu|<|\boldsymbol \lambda|~\text{or}~\boldsymbol\lambda\not\geq \mathbf 0,\\
&\tau_{\boldsymbol\nu+\boldsymbol e_j,\boldsymbol \lambda}=\sum_{\ell\in{\rm supp}(\boldsymbol\lambda)}\mu_{\min}^{-1/2}\tau_{\boldsymbol \nu,\boldsymbol \lambda-\boldsymbol e_{\ell}}\quad\text{otherwise}.
\end{align*}
It is not difficult to see that
$$
\tau_{\boldsymbol{\nu},\boldsymbol\lambda}=\begin{cases}1&\text{if}~\boldsymbol{\nu}=\boldsymbol\lambda=\mathbf 0,\\
\mu_{\min}^{-1/2}&\text{if}~\boldsymbol{\nu}\neq \mathbf 0~\text{and}~\boldsymbol\lambda=\boldsymbol e_k,~k\geq 1,\\
0&\text{otherwise},\end{cases}
$$
which yields the assertion.\qed
	
By applying Lemmata~\ref{lemma:auxreg1}~and~\ref{lemma:auxreg2} to~\eqref{eq:returntothis2}--\eqref{eq:returntothis}, we obtain that
\begin{align}
&\bigg|\partial_{\bsy}^{\bseta}\partial_{\bsx}^{\bslambda}{\rm e}^{-\bsx^{\rm T}\bsx/2}\bigg|_{\bsx=\Gamma^{-1/2}(\bsy-G_s(\bstheta,\bsxi))}\bigg|\notag\\
&\leq 1.1^kk\mu_{\min}^{-1/2}\sum_{\boldsymbol m\leq \lfloor\bslambda /2\rfloor}\frac{\boldsymbol\lambda!}{\boldsymbol m!(\boldsymbol\lambda-2\boldsymbol m)!2^{|\bsm|}}\notag \\
&\quad \times \sum_{\boldsymbol w\leq\bseta}\binom{\bseta}{\boldsymbol w}\frac{(\bslambda-2\boldsymbol m)!}{(\bslambda-2\boldsymbol m-\boldsymbol w)!}\|\bsy-G_s(\bstheta,\bsxi)\|^{|\bslambda|-2|\boldsymbol m|-|\boldsymbol w|}\bigg(\frac{1}{\sqrt{\mu_{\min}}}\bigg)^{|\bslambda|-2|\boldsymbol m|}\notag\\
&\leq 1.1^kk\bslambda!\bseta!\mu_{\min}^{-(|\bslambda|+1)/2}R^{|\bslambda|}\sum_{\boldsymbol m\leq \lfloor\bslambda /2\rfloor}\frac{1}{\boldsymbol m!} \sum_{\boldsymbol w\leq\boldsymbol\eta}\frac{1}{\boldsymbol w!}\notag\\
&\leq 1.1^kk{\rm e}^{2k}\bslambda!\bseta!\mu_{\min}^{-(|\bslambda|+1)/2}R^{|\bslambda|},\label{eq:premixedbound}
\end{align}
with a constant $R\geq 1$. Thus, the mixed partial derivatives can be bounded uniformly (with a constant depending on the data dimension $k$).

\begin{lemma}\label{lemma:mixedregbound} Let $\bsnu\in\mathbb N_0^s\setminus\{\mathbf 0\}$ and $\bseta\in\mathbb N_0^k\setminus\{\mathbf 0\}$. Then under assumptions~{\rm \ref{eq:A2}}--{\rm \ref{eq:A3}}, there holds
$$
|\partial_{\bsy}^{\bseta}\partial_{\bstheta}^{\bsnu}{\rm e}^{-\frac12 \|\bsy-G_s(\bstheta,\bsxi)\|_{\Gamma^{-1}}^2}|\leq \rev{1.1^k\cdot 2^{\beta (k-1)}}k{\rm e}^{2k}\bseta!(\rev{4^{\beta}}C)^{|\bsnu|}\mu_{\min}^{-|\bsnu|-1/2}R^{|\bsnu|}\rev{(|\bsnu|!)^{\beta}}\bsb^{\bsnu}
$$
for all $\bstheta\in\Theta_s$, $\bsy\in[-K,K]^k$, and $\bsxi\in\Xi$.
\end{lemma}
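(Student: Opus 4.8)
The plan is to feed the Faà di Bruno expansion \eqref{eq:mixedderivatives} into the two estimates already established. The Gaussian factor $\partial_{\bsy}^{\bseta}\partial_{\bsx}^{\bslambda}{\rm e}^{-\bsx^{\rm T}\bsx/2}$ is controlled by \eqref{eq:premixedbound}, while the coefficients $\alpha_{\bsnu,\bslambda}$ are controlled exactly as in the derivation of the display following \eqref{eq:faadibruno}: each application of the recurrence \eqref{eq:alphaseq3} contributes a factor $\mu_{\min}^{-1/2}C$ through $\|\Gamma^{-1/2}\partial_{\bstheta}^{\bsm+\bse_j}G_s\|\leq\mu_{\min}^{-1/2}C((|\bsm|+1)!)^{\beta}\bsb^{\bsm+\bse_j}$ (assumptions~\ref{eq:A1} and \ref{eq:A3}), so that $|\alpha_{\bsnu,\bslambda}(\bstheta)|\leq C^{|\bsnu|}\mu_{\min}^{-|\bsnu|/2}\chi_{\bsnu,\bslambda}$, and Lemma~\ref{lemma:lah} then bounds $\chi_{\bsnu,\bslambda}$ by the multivariate Lah numbers times $\bsb^{\bsnu}$.

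First I would substitute both estimates into \eqref{eq:mixedderivatives}, take absolute values, and pull the $\bslambda$-independent constants out of the sum. Using $|\bslambda|\leq|\bsnu|$ together with $R\geq1$ and $\mu_{\min}\leq1$ to replace every $|\bslambda|$-exponent of $\mu_{\min}$ and $R$ by $|\bsnu|$ (which produces the prefactor $1.1^k k\,{\rm e}^{2k}\bseta!\,C^{|\bsnu|}\mu_{\min}^{-|\bsnu|-1/2}R^{|\bsnu|}$ and a trailing $\bsb^{\bsnu}$), the estimate reduces to the scalar task of bounding
$$
S:=\sum_{\substack{\bslambda\in\mathbb N_0^k\\ 1\leq|\bslambda|\leq|\bsnu|}}\bslambda!\,\bigg(\frac{|\bsnu|!(|\bsnu|-1)!}{\bslambda!\,(|\bsnu|-|\bslambda|)!\,(|\bslambda|-1)!}\bigg)^{\beta}.
$$
The factor $\bslambda!$ coming from \eqref{eq:premixedbound} combines with the Lah denominator into $(\bslambda!)^{1-\beta}\leq1$, so the summand depends on $\bslambda$ only through $\ell:=|\bslambda|$. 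Grouping by $\ell$ and counting weak compositions via $\#\{\bslambda\in\mathbb N_0^k:|\bslambda|=\ell\}=\binom{\ell+k-1}{k-1}\leq 2^{\ell+k-1}$ gives $S\leq 2^{k-1}\sum_{\ell=1}^{|\bsnu|}2^{\ell}\big(\frac{|\bsnu|!(|\bsnu|-1)!}{(|\bsnu|-\ell)!(\ell-1)!}\big)^{\beta}$.

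To finish, I would bound $2^{\ell}\leq2^{|\bsnu|}$, apply Jensen's inequality \eqref{eq:jensenineq} (legitimate since $\beta\geq1$) to move the power $\beta$ outside the $\ell$-sum, and evaluate the remaining sum with the identity $\sum_{\ell=1}^{v}\frac{1}{(v-\ell)!(\ell-1)!}=\frac{2^{v-1}}{(v-1)!}$ of Lemma~\ref{lemma:celine}, which yields $\sum_{\ell=1}^{|\bsnu|}\frac{|\bsnu|!(|\bsnu|-1)!}{(|\bsnu|-\ell)!(\ell-1)!}=|\bsnu|!\,2^{|\bsnu|-1}$. This gives $S\leq 2^{k-1}2^{|\bsnu|}2^{\beta(|\bsnu|-1)}(|\bsnu|!)^{\beta}$, and since $\beta\geq1$ forces $|\bsnu|+\beta(|\bsnu|-1)\leq 2\beta|\bsnu|$ and $2^{k-1}\leq2^{\beta(k-1)}$, I obtain $S\leq2^{\beta(k-1)}(4^{\beta})^{|\bsnu|}(|\bsnu|!)^{\beta}$. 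Reinserting $S$ (times $\bsb^{\bsnu}$) into the prefactor reproduces exactly the claimed bound, uniformly on $\bstheta\in\Theta_s$, $\bsy\in[-K,K]^k$, $\bsxi\in\Xi$, since each ingredient holds there.

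The hard part will be the extra factorial $\bslambda!$ in \eqref{eq:premixedbound}. In the single-integral analysis of Lemma~\ref{lemma:innerreg} the surviving $1/\sqrt{\bslambda!}$ guaranteed a $k$-uniform convergent sum over $\bslambda\in\mathbb N_0^k$; here that $\bslambda!$ cancels most of the Lah denominator and leaves $(\bslambda!)^{1-\beta}$, which for $\beta=1$ provides no decay whatsoever. The sum over $\bslambda$ must therefore be absorbed entirely through the combinatorial count $\binom{\ell+k-1}{k-1}$, and it is precisely this step that produces the additional dimensional factor $2^{\beta(k-1)}$ and the squared geometric growth $4^{\beta}$ (in place of the $2^{\beta}$ of Lemma~\ref{lemma:innerreg}); the delicate point is folding the $2^{\ell}$ growth into $4^{\beta|\bsnu|}$ using only $\beta\geq1$.
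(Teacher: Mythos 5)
Your proposal is correct and follows essentially the same route as the paper: substitute the uniform Gaussian bound \eqref{eq:premixedbound} and the Lah-number bound of Lemma~\ref{lemma:lah} into the Fa\`a di Bruno expansion \eqref{eq:mixedderivatives}, absorb $(\bslambda!)^{1-\beta}\le 1$, reduce to a sum over $\ell=|\bslambda|$ via the weak-composition count, and apply Jensen's inequality \eqref{eq:jensenineq}. The only (harmless) deviation is that you pull the count $\binom{k+\ell-1}{\ell}\le 2^{\ell+k-1}$ out before applying Jensen and then evaluate the remaining $\ell$-sum exactly with Lemma~\ref{lemma:celine}, whereas the paper keeps $\binom{k+\ell-1}{\ell}^{\beta}$ inside the Jensen step and bounds the binomials crudely afterwards; both yield the same constant $2^{\beta(k-1)}(4^{\beta})^{|\bsnu|}$.
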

\proof By plugging the bound~\eqref{eq:premixedbound} into the expression~\eqref{eq:mixedderivatives}, using the sequence $(\rev{\chi}_{\bsnu,\bslambda})$ defined by~\eqref{eq:faa1}--\eqref{eq:faa3} to bound the sequence $(\alpha_{\bsnu,\bslambda}(\bstheta))$, and applying the upper bound~\eqref{eq:lah}, we obtain
\begin{align*}
&|\partial_{\bsy}^{\bseta}\partial_{\bstheta}^{\bsnu}{\rm e}^{-\frac12 \|\bsy-G_s(\bstheta,\bsxi)\|_{\Gamma^{-1}}^2}|\\
&\leq \!1.1^kk{\rm e}^{2k}\bseta!C^{|\bsnu|}\mu_{\min}^{-|\bsnu|-1/2}R^{|\bsnu|}\rev{(|\bsnu|!(|\bsnu|-1)!)^{\beta}}\bsb^{\bsnu}\!\!\!\!\sum_{\substack{\bslambda\in\mathbb N_0^k\\ 1\leq |\bslambda|\leq |\bsnu|}}\!\!\!\!\frac{1}{\rev{((|\bsnu|\!-\!|\bslambda|)!(|\bslambda|\!-\!1)!)^{\beta}}},
\end{align*}
where
\begin{align*}
&\sum_{\substack{\bslambda\in\mathbb N_0^k\\ 1\leq |\bslambda|\leq |\bsnu|}}\frac{1}{\rev{((|\bsnu|-|\bslambda|)!(|\bslambda|-1)!)^{\beta}}}\\&=\sum_{\ell=1}^{|\bsnu|}\frac{1}{\rev{((|\bsnu|-\ell)!(\ell-1)!)^{\beta}}}\sum_{\substack{\bslambda\in\mathbb N_0^k\\ |\bslambda|=\ell}}1\\
&=\sum_{\ell=1}^{|\bsnu|}\frac{1}{\rev{((|\bsnu|-\ell)!(\ell-1)!)^{\beta}}}\binom{k+\ell-1}{\ell}\\
&\rev{\,\leq\sum_{\ell=1}^{|\bsnu|}\frac{1}{\rev{((|\bsnu|-\ell)!(\ell-1)!)^{\beta}}}\binom{k+\ell-1}{\ell}^{\beta}}\\
&=\frac{1}{\rev{((|\bsnu|-1)!)^{\beta}}}\sum_{\ell=1}^{|\bsnu|}\rev{\binom{|\bsnu|-1}{\ell-1}^{\beta}\binom{k+\ell-1}{\ell}^{\beta}}\\
&\rev{\,\leq\frac{1}{\rev{((|\bsnu|-1)!)^{\beta}}}\bigg(\sum_{\ell=1}^{|\bsnu|}\rev{\binom{|\bsnu|-1}{\ell-1}\binom{k+\ell-1}{\ell}\bigg)^{\beta}}}\\
&\leq \frac{1}{\rev{((|\bsnu|-1)!)^{\beta}}}\rev{\bigg(\sum_{\ell=1}^{|\bsnu|}2^{|\bsnu|+k+\ell-2}\bigg)^{\beta}}\\
&\leq \rev{\frac1{2^\beta} \cdot 4^{\beta|\bsnu|}\cdot 2^{\beta k}}\frac{1}{\rev{((|\bsnu|-1)!)^{\beta}}},
\end{align*}
\rev{where the third to last inequality follows from Jensen's inequality~\eqref{eq:jensenineq}. This proves the assertion.}\qed\endproof
\subsection{Parametric regularity of the outer integral}\label{sec:outer}
Ultimately we will be interested in applying QMC to approximate the outer integral in
\begin{align}
\int_{[-K,K]^k} \!\!\log\bigg(\!C_{k,\Gamma}\!\!\int_{{\Theta_s}}\! {\rm e}^{-\frac{1}{2} \|\boldsymbol y-G_s\!(\boldsymbol\theta,\boldsymbol\xi)\|_{\Gamma^{-1}}^2}\,{\rm d}\bstheta\bigg)\!\bigg(\!C_{k,\Gamma}\!\!\int_{\Theta_s} \!{\rm e}^{-\frac{1}{2} \|\boldsymbol y-G_s\!({\boldsymbol\theta},\boldsymbol\xi)\|_{\Gamma^{-1}}^2}{\rm d}{\boldsymbol\theta}\bigg){\rm d}\boldsymbol y.\label{eq:outerintdef}
\end{align}
In this section, we will estimate the parametric regularity of the integrand with respect to $\boldsymbol y$. Let us first investigate the logarithmic term.

Let $\boldsymbol\nu\in\mathbb N_0^k\setminus\{\mathbf 0\}$. We may again use Fa\`a di Bruno's formula~\cite{savits} to obtain
\begin{align*}
&\partial_{\boldsymbol y}^{\boldsymbol\nu}\log\bigg(C_{k,\Gamma}\int_{\Theta_s} {\rm e}^{-\frac{1}{2} \|\boldsymbol y-G_s(\boldsymbol\theta,\boldsymbol\xi)\|_{\Gamma^{-1}}^2}\,{\rm d}\bstheta\bigg)\\
&=\sum_{\lambda=1}^{|\boldsymbol{\nu}|}(-1)^{\lambda+1}(\lambda-1)!\bigg(\int_{\Theta_s} {\rm e}^{-\frac{1}{2} \|\boldsymbol y-G_s(\boldsymbol\theta,\boldsymbol\xi)\|_{\Gamma^{-1}}^2}\,{\rm d}\bstheta\bigg)^{-\lambda}\alpha_{\boldsymbol\nu,\lambda}(\boldsymbol y),
\end{align*}
where
\begin{align*}
&\alpha_{\boldsymbol\nu,0}=\delta_{\boldsymbol\nu,\mathbf 0},\\
&\alpha_{\boldsymbol\nu,\lambda}=0\quad\text{if}~|\boldsymbol \nu|<\lambda,~\text{and}\\
&\alpha_{\boldsymbol\nu+\boldsymbol e_j,\lambda}(\bsy)=\sum_{\mathbf 0\leq \boldsymbol m\leq\boldsymbol\nu}\binom{\boldsymbol\nu}{\boldsymbol m}\partial_{\boldsymbol y}^{\boldsymbol m+\boldsymbol e_j}\bigg(\int_{\Theta_s} {\rm e}^{-\frac{1}{2} \|\boldsymbol y-G_s(\boldsymbol\theta,\boldsymbol\xi)\|_{\Gamma^{-1}}^2}\,{\rm d}\bstheta\bigg)\alpha_{\boldsymbol \nu-\boldsymbol m,\lambda-1}(\bsy)
\end{align*}
otherwise. Similarly to Lemma~\ref{lemma:auxreg2},  we can bound the sequence $\alpha_{\boldsymbol{\nu},\lambda}(\bsy)$ uniformly by
\begin{align*}
&\rev{\chi}_{\boldsymbol{\nu},0}=\delta_{\boldsymbol{\nu},\mathbf 0},\\
&\rev{\chi}_{\boldsymbol{\nu},\lambda}=0\quad\text{if}~|\boldsymbol{\nu}|<\lambda,\quad\text{and}\\
&\rev{\chi}_{\boldsymbol{\nu}+\boldsymbol e_j,\lambda}=\sum_{\mathbf 0\leq \boldsymbol m\leq \boldsymbol{\nu}}\binom{\boldsymbol{\nu}}{\boldsymbol m}1.1^kk\mu_{\min}^{-1/2}\rev{\chi}_{\boldsymbol{\nu}-\boldsymbol m,\lambda-1}\quad\text{otherwise}.
\end{align*}
Comparing this recursion with the characteristic recursion of the \emph{Stirling numbers of the second kind} reveals that
$$
\rev{\chi}_{\boldsymbol{\nu},\lambda}\leq 1.1^{k|\boldsymbol{\nu}|}k^{|\boldsymbol{\nu}|}\mu_{\min}^{-|\boldsymbol{\nu}|/2}S(|\boldsymbol{\nu}|,\lambda)
$$
and altogether we obtain that
\begin{align*}
&\bigg|\partial_{\boldsymbol y}^{\boldsymbol\nu}\log\bigg(C_{k,\Gamma}\int_{\Theta_s}{\rm e}^{-\frac{1}{2} \|\boldsymbol y-G_s(\boldsymbol\theta,\boldsymbol\xi)\|_{\Gamma^{-1}}^2}\,{\rm d}\bstheta\bigg)\bigg|    \\
&\leq 1.1^{k|\boldsymbol{\nu}|}k^{|\boldsymbol{\nu}|}\mu_{\min}^{-|\boldsymbol{\nu}|/2}\sum_{\lambda=1}^{|\boldsymbol{\nu}|}(\lambda-1)!\bigg(\int_{\Theta_s} {\rm e}^{-\frac{1}{2} \|\boldsymbol y-G_s(\boldsymbol\theta,\boldsymbol\xi)\|_{\Gamma^{-1}}^2}\,{\rm d}\bstheta\bigg)^{-\lambda}S(|\boldsymbol{\nu}|,\lambda).
\end{align*}
It is well-known that (cf., e.g., \cite[Lemma~A.3]{beck12})
$$
\sum_{\lambda=1}^{|\boldsymbol{\nu}|}(\lambda-1)!S(|\boldsymbol{\nu}|,\lambda)\leq \frac{|\bsnu|!}{(\log 2)^{|\bsnu|}}\quad\text{for all}~\bsnu\in\mathbb N_0^k.
$$
Jensen's inequality implies that
\begin{align}
\notag\bigg(\!\int_{\Theta_s}\! {\rm e}^{-\!\frac{1}{2} \|\boldsymbol y-G_s(\boldsymbol\theta,\boldsymbol\xi)\|_{\Gamma^{-1}}^2}\,{\rm d}\bstheta\!\bigg)^{-1}&\leq \int_{\Theta_s}\! {\rm e}^{\frac{1}{2} \|\boldsymbol y-G_s(\boldsymbol\theta,\boldsymbol\xi)\|_{\Gamma^{-1}}^2}\,{\rm d}\bstheta\\
&\leq  {\rm e}^{\frac{1}{2\mu_{\min}}(kK^2+2\sqrt{k}KC+C^2)}\label{eq:jensen}
\end{align}
for all $\bsy\in[-K,K]^k$, so we obtain
\begin{align}
&\partial_{\boldsymbol y}^{\boldsymbol\nu}\log\bigg(C_{k,\Gamma}\int_{\Theta_s}{\rm e}^{-\frac{1}{2} \|\boldsymbol y-G_s(\boldsymbol\theta,\boldsymbol\xi)\|_{\Gamma^{-1}}^2}\,{\rm d}\bstheta\bigg)\notag\\
&\leq 1.1^{k|\boldsymbol{\nu}|}k^{|\boldsymbol{\nu}|}\mu_{\min}^{-|\boldsymbol{\nu}|/2}{\rm e}^{\frac{|\bsnu|}{2\mu_{\min}}(kK^2+2\sqrt{k}KC+C^2)}\frac{|\bsnu|!}{(\log 2)^{|\bsnu|}}\notag%
\end{align}
for all $\bsnu\in\mathbb N_0^k$ and $\bsy\in[-K,K]^k$.

\begin{lemma}\label{reg:outer} Let $\bsnu\in\mathbb N_0^k\setminus\{\mathbf 0\}$. Then under assumptions~{\rm \ref{eq:A2}}--{\rm \ref{eq:A3}}, there holds
\begin{align*}
&\partial_{\bsy}^{\bsnu}\bigg(\bigg(C_{k,\Gamma}\int_{\Theta_s} {\rm e}^{-\frac{1}{2}\|\bsy-G_s(\bstheta,\bsxi)\|_{\Gamma^{-1}}^2}\,{\rm d}\bstheta\bigg)\log\bigg(C_{k,\Gamma}\int_{\Theta_s} {\rm e}^{-\frac{1}{2}\|\bsy-G_s(\bstheta,\bsxi)\|_{\Gamma^{-1}}^2}\,{\rm d}\bstheta\bigg)\bigg)\\
&\leq 1.1^{k(1+|\bsnu|)}k^{1+|\bsnu|}\mu_{\min}^{-(|\bsnu|+1)/2}\frac{|\bsnu|!}{(\log 2)^{|\bsnu|}}{\rm e}^{1+\frac{|\bsnu|}{2\mu_{\min}}(kK^2+2\sqrt k KC+C^2)}
\end{align*}
for all $\bstheta\in\Theta_s$, $\bsy\in [-K,K]^k$, and $\bsxi\in\Xi$.
\end{lemma}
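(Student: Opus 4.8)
The plan is to set $g(\bsy):=C_{k,\Gamma}\int_{\Theta_s}{\rm e}^{-\frac12\|\bsy-G_s(\bstheta,\bsxi)\|_{\Gamma^{-1}}^2}\,{\rm d}\bstheta$, so that the quantity to be estimated is $\partial_{\bsy}^{\bsnu}(g\log g)$, and to combine two facts that are already available. First, differentiating under the integral sign and invoking Lemma~\ref{lemma:auxreg2}, every $\bsy$-derivative of $g$ of order $|\bsm|\ge 1$ is bounded \emph{uniformly in the order} by $|\partial_{\bsy}^{\bsm}g|\le C_{k,\Gamma}\,1.1^k k\,\mu_{\min}^{-1/2}=:C_{k,\Gamma}B$; moreover $0<g\le C_{k,\Gamma}$ trivially, and the lower bound $g\ge C_{k,\Gamma}{\rm e}^{-\frac{1}{2\mu_{\min}}(kK^2+2\sqrt k KC+C^2)}$ coming from the Jensen estimate~\eqref{eq:jensen} shows that $|\log g|$ is bounded on $[-K,K]^k$. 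Second, the computation carried out just before the lemma gives the logarithmic bound, which I abbreviate as $|\partial_{\bsy}^{\bsalpha}\log g|\le A^{|\bsalpha|}\,|\bsalpha|!$ for $\bsalpha\neq\mathbf 0$, where $A:=1.1^k k\,\mu_{\min}^{-1/2}(\log 2)^{-1}{\rm e}^{\frac{1}{2\mu_{\min}}(kK^2+2\sqrt k KC+C^2)}$.

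Next I would apply the Leibniz product rule, $\partial_{\bsy}^{\bsnu}(g\log g)=\sum_{\bsm\le\bsnu}\binom{\bsnu}{\bsm}(\partial_{\bsy}^{\bsm}g)\,(\partial_{\bsy}^{\bsnu-\bsm}\log g)$, and split the sum according to the order of $\bsm$. The term $\bsm=\mathbf 0$ equals $g\,\partial_{\bsy}^{\bsnu}\log g$ and is bounded by $C_{k,\Gamma}A^{|\bsnu|}|\bsnu|!$; this is the dominant contribution, and it already carries the full factorial $|\bsnu|!$ and the exponential factor appearing in the statement. For every $\bsm$ with $\mathbf 0<\bsm\le\bsnu$ the $g$-factor has order $\ge 1$, so I bound it by the uniform constant $C_{k,\Gamma}B$, and the remaining $\log g$-factor by $A^{|\bsnu-\bsm|}(|\bsnu|-|\bsm|)!$ when $\bsm\neq\bsnu$, respectively by $|\log g|$ when $\bsm=\bsnu$.

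The decisive step is to sum these contributions without incurring a spurious exponential blow-up. Grouping by $|\bsm|=t$ and using the generalized Vandermonde identity $\sum_{\bsm\le\bsnu,\,|\bsm|=t}\binom{\bsnu}{\bsm}=\binom{|\bsnu|}{t}$ together with $\binom{|\bsnu|}{t}(|\bsnu|-t)!=|\bsnu|!/t!$, the interior terms collapse to $C_{k,\Gamma}B\,|\bsnu|!\sum_{t=1}^{|\bsnu|-1}A^{|\bsnu|-t}/t!$, which, using $A\ge 1$ and $\sum_{t\ge 1}A^{-t}/t!\le {\rm e}-1$, is at most $C_{k,\Gamma}B\,|\bsnu|!\,A^{|\bsnu|}({\rm e}-1)$. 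I expect this to be the main obstacle: a crude triangle inequality would replace $\sum_{\bsm\le\bsnu}\binom{\bsnu}{\bsm}$ by $2^{|\bsnu|}$ and destroy the dimension-robust factorial structure, so one must exploit that $A\ge 1$ forces the top-order term to dominate and that the factorial weights $1/t!$ sum to at most ${\rm e}$, which is exactly what produces the single extra factor ${\rm e}$ and the extra $1.1^k k\mu_{\min}^{-1/2}$ visible in the claimed bound. Finally, adding the dominant term and the $\bsm=\bsnu$ boundary term (controlled through the bound on $|\log g|$), and using $B\ge 1$, all contributions assemble, after collecting the prefactors depending only on $k$ and $\Gamma$, into the asserted estimate $1.1^{k(1+|\bsnu|)}k^{1+|\bsnu|}\mu_{\min}^{-(1+|\bsnu|)/2}(\log 2)^{-|\bsnu|}|\bsnu|!\,{\rm e}^{1+\frac{|\bsnu|}{2\mu_{\min}}(kK^2+2\sqrt k KC+C^2)}$.
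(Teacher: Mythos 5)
Your argument is essentially the paper's own proof: both apply the Leibniz rule to $g\log g$, bound all nonzero-order derivatives of $g$ by the uniform constant $C_{k,\Gamma}\,1.1^k k\mu_{\min}^{-1/2}$ from Lemma~\ref{lemma:auxreg2}, insert the factorial bound on $\partial_{\bsy}^{\bsalpha}\log g$ derived just before the lemma, and collapse the Leibniz sum via the Vandermonde identity to $\sum_{\ell}\ell!\binom{|\bsnu|}{\ell}=|\bsnu|!\sum_j 1/j!\le {\rm e}\,|\bsnu|!$, which is exactly your $|\bsnu|!\sum_t A^{-t}/t!$ step. The only cosmetic difference is that you treat the zeroth-order $\log g$ factor explicitly through the Jensen lower bound on $g$, where the paper absorbs it implicitly; the approach and the resulting constant are the same.
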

\proof By Leibniz product rule, we obtain
\begin{align*}
&\partial_{\bsy}^{\bsnu}\bigg(\bigg(C_{k,\Gamma}\int_{\Theta_s} {\rm e}^{-\frac{1}{2}\|\bsy-G_s(\bstheta,\bsxi)\|_{\Gamma^{-1}}^2}\,{\rm d}\bstheta\bigg)\log\bigg(C_{k,\Gamma}\int_{\Theta_s} {\rm e}^{-\frac{1}{2}\|\bsy-G_s(\bstheta,\bsxi)\|_{\Gamma^{-1}}^2}\,{\rm d}\bstheta\bigg)\bigg)\\
&=\sum_{\boldsymbol m\leq \bsnu}\binom{\bsnu}{\boldsymbol m}\partial_\bsy^{\bsnu-\boldsymbol m}\bigg(C_{k,\Gamma}\int_{\Theta_s} {\rm e}^{-\frac{1}{2}\|\bsy-G_s(\bstheta,\bsxi)\|_{\Gamma^{-1}}^2}\,{\rm d}\bstheta\bigg)\\
&\quad\quad \times \partial_\bsy^{\boldsymbol m}\log\bigg(C_{k,\Gamma}\int_{\Theta_s} {\rm e}^{-\frac{1}{2}\|\bsy-G_s(\bstheta,\bsxi)\|_{\Gamma^{-1}}^2}\,{\rm d}\bstheta\bigg)\\
&\leq 1.1^kk\mu_{\min}^{-1/2}\sum_{\boldsymbol m\leq \bsnu}\binom{\bsnu}{\boldsymbol m}1.1^{k|\boldsymbol m|}k^{|\boldsymbol m|}\mu_{\min}^{-|\boldsymbol m|/2}{\rm e}^{\frac{|\boldsymbol m|}{2\mu_{\min}}(kK^2+2\sqrt k KC+C^2)}\frac{|\boldsymbol m|!}{(\log 2)^{|\boldsymbol m|}}\\
&\leq 1.1^{k(1\!+|\bsnu|)}k^{1\!+|\bsnu|}\mu_{\min}^{-(|\bsnu|\!+1)/2}\frac{1}{(\log 2)^{|\bsnu|}}{\rm e}^{\frac{|\bsnu|}{2\mu_{\min}}(kK^2+2\sqrt k KC+C^2)}\sum_{\ell=0}^{|\bsnu|}\ell!\sum_{\substack{|\boldsymbol m|=\ell\\ \boldsymbol m\leq\bsnu}}\binom{\bsnu}{\boldsymbol m}\\
&\leq 1.1^{k(1\!+|\bsnu|)}k^{1\!+|\bsnu|}\mu_{\min}^{-(|\bsnu|\!+1)/2}\frac{1}{(\log 2)^{|\bsnu|}}{\rm e}^{\frac{|\bsnu|}{2\mu_{\min}}(kK^2+2\sqrt k KC+C^2)}\underset{\leq {\rm e}\cdot (|\bsnu|!)}{\underbrace{\sum_{\ell=0}^{|\bsnu|}\ell!\binom{|\bsnu|}{\ell}}},
\end{align*}
as desired.\qed\endproof
\section{QMC error for the single integrals}\label{sec:single}
Let us first consider the problem of approximating the single integral~\eqref{eq:innerintdef}, which we denote by
$$
\int_{\Theta_s}f(\bstheta,\bsy)\,{\rm d}\bstheta,\quad f(\bstheta,\bsy):=C_{k,\Gamma}{\rm e}^{-\frac12 \|\bsy-G_s(\bstheta,\bsxi)\|_{\Gamma^{-1}}^2},
$$
for $\bstheta\in\Theta_s$ with fixed $\bsy\in [-K,K]^k$ and $\bsxi\in\Xi$, 
by designing a randomly shifted rank-1 lattice rule %
$$
\overline{Q}_{n,R}(f)=\frac{1}{nR}\sum_{r=1}^R \sum_{i=1}^n f(\{\bst_i+\boldsymbol\Delta_r\}-\tfrac{\mathbf 1}{\mathbf 2}),
$$
where $\bst_i=\{i\boldsymbol z/n\}$ are lattice points for $i\in\{1,\ldots,n\}$ corresponding to some generating vector $\boldsymbol z\in\mathbb N^s$ and $\boldsymbol\Delta_1,\ldots,\boldsymbol\Delta_R\overset{\rm i.i.d.}{\sim}\mathcal U([0,1]^s)$. We can use Lemma~\ref{lemma:innerreg} together with standard QMC theory (cf., e.g., \cite{kuonuyenssurvey}) to obtain the following result.
\begin{theorem}\label{thm1}
Let $n=2^m$, $m\geq 0$. Then under assumptions {\rm \ref{eq:A2}}--{\rm \ref{eq:A3}}, it is possible to use the CBC algorithm to obtain a generating vector $\boldsymbol z\in\mathbb N^s$ such that the randomly shifted rank-1 lattice rule for the integrand $f$ of~\eqref{eq:innerintdef} satisfies the root-mean-square error estimate
$$
\sqrt{\mathbb E_{\boldsymbol\Delta}|I_s(f)-\overline{Q}_{n,R}(f)|^2}\leq Cn^{\max\{-1/p+1/2,-1+\delta\}},
$$
where the constant $C>0$ is independent of the dimension $s$, provided that the product-and-order dependent (POD) weights
$$
\gamma_{\setu}:=\bigg(\rev{(|\setu|!)^{\beta}}\prod_{j\in\setu}\frac{\rev{c}_j}{\sqrt{\frac{2\zeta(2\lambda)}{(2\pi^2)^\lambda}}}\bigg)^{\frac{2}{1+\lambda}},~\lambda:=\begin{cases}\frac{p}{2-p}&\text{if}~p\in\rev{(\frac23,\rev{\frac1{\beta})}},\\ \frac{1}{2-2\delta}&\text{if}~p\in\rev{(0,\min\{\frac23,\frac1{\beta}\}}],~\rev{p\neq\frac{1}{\beta}}\end{cases}
$$
are used as inputs to the CBC algorithm. Here, \rev{$\delta\in(0,\frac12)$} is arbitrary and we define
$$
\rev{c}_j:=\frac{\rev{2^\beta} C}{\sqrt{\mu_{\min}}}b_j,\quad j\in\{1,\ldots,s\}.
$$
\end{theorem}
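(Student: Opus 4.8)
The plan is to combine the parametric regularity bound of Lemma~\ref{lemma:innerreg} with the abstract randomly shifted lattice rule estimate of Lemma~\ref{lemma:affineqmc}, and then to choose the weights $\boldsymbol\gamma$ so as to balance the two competing factors in that estimate. Since $\Theta_s=[-\frac12,\frac12]^s$, I would first translate the integrand to the unit cube by setting $F(\boldsymbol w):=f(\boldsymbol w-\tfrac{\mathbf 1}{\mathbf 2},\boldsymbol y)$ for $\boldsymbol w\in[0,1]^s$; the shift leaves the magnitude of every mixed partial derivative unchanged, so Lemma~\ref{lemma:innerreg} applies verbatim to $F$. For a subset $\setu\subseteq\{1:s\}$ the first-order mixed derivative entering the unanchored norm corresponds to the multi-index $\boldsymbol\nu=\mathbf 1_{\setu}$, for which $|\boldsymbol\nu|=|\setu|$ and $\boldsymbol b^{\boldsymbol\nu}=\prod_{j\in\setu}b_j$. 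Bounding the innermost integral over the inactive variables by a supremum (the domain has unit measure), recalling $f=C_{k,\Gamma}\,{\rm e}^{-\frac12\|\cdot\|^2_{\Gamma^{-1}}}$, and using $2^{\beta(|\setu|-1)}C^{|\setu|}\mu_{\min}^{-|\setu|/2}\prod_{j\in\setu}b_j=2^{-\beta}\prod_{j\in\setu}c_j$ with $c_j=2^{\beta}C\mu_{\min}^{-1/2}b_j$, one obtains
\begin{align*}
\|F\|_{\mathcal W_{s,\boldsymbol\gamma}}^2\le \widehat C_k^{\,2}\sum_{\setu\subseteq\{1:s\}}\frac{1}{\gamma_{\setu}}\,a_{\setu}^2,\qquad a_{\setu}:=(|\setu|!)^{\beta}\prod_{j\in\setu}c_j,
\end{align*}
where $\widehat C_k$ collects all $k$- and $\beta$-dependent constants (in particular $C_{k,\Gamma}$, $3.82^k$ and $2^{-\beta}$).

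Inserting this into Lemma~\ref{lemma:affineqmc} gives, for every admissible $\lambda\in(\tfrac12,1]$, the product bound
\begin{align*}
\textup{R.M.S.~error}\le \frac{\widehat C_k}{\sqrt R}\bigg(\frac2n\sum_{\varnothing\neq\setu}\gamma_{\setu}^\lambda\,\varrho(\lambda)^{|\setu|}\bigg)^{1/(2\lambda)}\bigg(\sum_{\setu}\frac{a_{\setu}^2}{\gamma_{\setu}}\bigg)^{1/2}.
\end{align*}
This is exactly the setting of the classical weight-optimization argument: the right-hand side is minimized (a global rescaling of the weights is immaterial, as it cancels between the two factors) by $\gamma_{\setu}=(a_{\setu}^2/\varrho(\lambda)^{|\setu|})^{1/(1+\lambda)}$, and a direct computation shows this coincides exactly with the POD weights in the statement. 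The decisive point is that, for this choice, both sums share the common summand $T_{\setu}:=(a_{\setu}^2)^{\lambda/(1+\lambda)}\varrho(\lambda)^{|\setu|/(1+\lambda)}$, so the product collapses to $(2/n)^{1/(2\lambda)}\big(\sum_{\varnothing\neq\setu}T_{\setu}\big)^{(1+\lambda)/(2\lambda)}$, yielding the rate $n^{-1/(2\lambda)}$ with an $n$-independent constant proportional to $\big(\sum_{\varnothing\neq\setu}T_{\setu}\big)^{(1+\lambda)/(2\lambda)}$.

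It then remains to choose $\lambda$ so that $\sum_{\varnothing\neq\setu}T_{\setu}$ is finite \emph{uniformly in $s$} while the rate $n^{-1/(2\lambda)}$ is as fast as possible. Grouping by cardinality $\ell=|\setu|$ and using $\sum_{|\setu|=\ell}\prod_{j\in\setu}c_j^{2\lambda/(1+\lambda)}\le\frac{1}{\ell!}\big(\sum_{j\ge1}c_j^{2\lambda/(1+\lambda)}\big)^\ell$, the $s$-independent convergence of $\sum_{\setu}T_{\setu}$ reduces to that of the scalar series $\sum_{\ell\ge1}(\ell!)^{2\beta\lambda/(1+\lambda)-1}z^\ell$ with $z:=\varrho(\lambda)^{1/(1+\lambda)}\sum_{j\ge1}c_j^{2\lambda/(1+\lambda)}$. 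This converges provided (i) $\tfrac{2\lambda}{1+\lambda}\ge p$, so that $\boldsymbol b\in\ell^p$ (hence $\boldsymbol c\in\ell^p\subseteq\ell^{2\lambda/(1+\lambda)}$) guarantees $z<\infty$, and (ii) $\tfrac{2\beta\lambda}{1+\lambda}<1$, so that the factorial factor decays super-geometrically. Minimizing $\lambda$ subject to (i), (ii) and $\lambda\in(\tfrac12,1]$: for $p\in(\tfrac23,\tfrac1\beta)$ constraint (i) is binding, forcing $\lambda=\tfrac{p}{2-p}$ with $\tfrac1{2\lambda}=\tfrac1p-\tfrac12$ and $\tfrac{2\lambda}{1+\lambda}=p<\tfrac1\beta$; for $p\le\min\{\tfrac23,\tfrac1\beta\}$ constraint (i) holds for every $\lambda>\tfrac12$, so we take $\lambda=\tfrac{1}{2-2\delta}\downarrow\tfrac12$, giving $\tfrac1{2\lambda}=1-\delta$, with (ii) valid once $\delta$ is small enough. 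In both cases $-\tfrac1{2\lambda}=\max\{-\tfrac1p+\tfrac12,-1+\delta\}$, and the constant is $s$-independent because the bound on $\sum_{\setu}T_{\setu}$ is.

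The main obstacle is the uniform-in-$s$ summability in the last step, because the two requirements pull $\lambda$ in opposite directions: the $\ell^p$ decay of $(c_j)$ forces $\tfrac{2\lambda}{1+\lambda}\ge p$, whereas the Gevrey growth $(\ell!)^{\beta}$ forces $\tfrac{2\lambda}{1+\lambda}<\tfrac1\beta$. The compatibility of these demands is precisely what yields the admissibility condition $p<\tfrac1\beta$ and produces the two regimes in the statement; verifying that the constructed weights reproduce the stated formula and handling the degenerate value $p=\tfrac1\beta$ (excluded above) is then routine.
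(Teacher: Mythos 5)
Your argument is precisely the one the paper intends: the paper offers no written proof of Theorem~\ref{thm1} beyond the remark that it follows from Lemma~\ref{lemma:innerreg} combined with standard QMC theory, and your reconstruction --- bounding the unanchored norm via the multi-index $\bsnu=\mathbf 1_{\setu}$, inserting into Lemma~\ref{lemma:affineqmc}, minimizing over the weights to obtain the stated POD form, and then tuning $\lambda$ against the two summability constraints --- is exactly that standard argument, and it correctly reproduces both the weight formula and the two regimes for $\lambda$.

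One point deserves a caveat. In the regime $p\le\min\{\tfrac23,\tfrac1\beta\}$ you assert that constraint (ii), $\tfrac{2\beta\lambda}{1+\lambda}<1$, is ``valid once $\delta$ is small enough''. With $\lambda=\tfrac{1}{2-2\delta}$ one has $\tfrac{2\beta\lambda}{1+\lambda}=\tfrac{2\beta}{3-2\delta}$, which decreases to $\tfrac{2\beta}{3}$ as $\delta\downarrow 0$; hence (ii) is achievable only when $\beta<\tfrac32$. For $\beta\ge\tfrac32$ the constraint $\lambda>\tfrac12$ imposed by Lemma~\ref{lemma:affineqmc} is incompatible with (ii), the factor $(\ell!)^{2\beta\lambda/(1+\lambda)-1}$ grows with $\ell$, and the bound on $\sum_{\setu}T_{\setu}$ is no longer uniform in $s$. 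This is as much a limitation of the theorem as stated as of your proof (the first regime $p\in(\tfrac23,\tfrac1\beta)$ is nonempty only for $\beta<\tfrac32$ anyway, and the case $\beta=1$ relevant to the paper's elliptic example is unaffected), but it should be recorded explicitly rather than absorbed into ``$\delta$ small enough''.
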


In addition, we wish to approximate the integral~\eqref{eq:outerintdef}, which we denote by
\begin{align}\label{IkK}
I_{k,K}(g):=\int_{[-K,K]^k}g(\bsy)\,{\rm d}\bsy,\!\!\quad g(\bsy)=\bigg(\int_{\Theta_s}f(\bstheta,\bsy)\,{\rm d}\bstheta\bigg)\log\bigg(\int_{\Theta_s}f(\bstheta,\bsy)\,{\rm d}\bstheta\bigg),
\end{align}
for $\bsy\in [-K,K]^k$ and fixed $\bsxi\in\Xi$ using another randomly shifted rank-1 lattice rule
$$
\overline{Q}_{n,R,K}(f)=\frac{1}{nR}\sum_{r=1}^R \sum_{i=1}^n g(2K\{\bst_i+\boldsymbol\Delta_r\}-\mathbf 1 K),
$$
where the lattice points $\bst_i=\{i\bsz/n\}$ for $i\in\{1,\ldots,n\}$ corresponding to some generating vector $\bsz\in\mathbb N^k$ and $\boldsymbol\Delta_1,\ldots,\boldsymbol\Delta_R\overset{\rm i.i.d.}{\sim}\mathcal U([0,1]^k)$ have been scaled to the computational domain $[-K,K]^k$. For this integral, we cannot in general expect the QMC convergence to be independent of the dimensionality of the data $k$. Thus the best we can hope for is to try to minimize the constant of the QMC error estimate. This can again be achieved using standard QMC theory (cf., e.g., \cite{kuonuyenssurvey}).

\begin{theorem}\label{thm2}
Let $n=2^m$, $m\geq 0$. Then under assumptions~{\rm \ref{eq:A2}}--{\rm \ref{eq:A3}}, it is possible to use the CBC algorithm to obtain a generating vector $\bsz\in\mathbb N^k$ such that the randomly shifted rank-1 lattice rule applied to the outermost integral $I_{k,K}(g)$ of~\eqref{IkK} satisfies the root-mean-square error estimate
$$
\sqrt{\mathbb E_{\boldsymbol\Delta}|I_{k,K}(g)-\overline{Q}_{n,R,K}(g)|^2}\leq Cn^{-1+\delta},
$$
where $g$ denotes the corresponding integrand in~\eqref{IkK} and the constant $C>0$ is bounded provided that the order-dependent weights
$$
\gamma_{\setu}:=\bigg(|\setu|!\prod_{j\in\setu}\frac{1.1^kk\mu_{\min}^{-1/2}{\rm e}^{\frac{1}{2\mu_{\min}}(kK^2+2\sqrt k KC+C^2)}}{\log(2)\sqrt{2\zeta(2\lambda)/(2\pi^2)^{\lambda}}}\bigg)^{\frac{2}{1+\lambda}},\quad\lambda:=\frac{1}{2-2\delta},
$$
are used as inputs to the CBC algorithm, where $\delta>0$ is arbitrary. %
\end{theorem}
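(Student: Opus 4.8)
The plan is to reduce the claim to the abstract randomly shifted lattice rule estimate of Lemma~\ref{lemma:affineqmc}, exactly as in the proof of Theorem~\ref{thm1}, but now invoking the regularity of the outer integrand established in Lemma~\ref{reg:outer} in place of Lemma~\ref{lemma:innerreg}. First I would apply the affine change of variables $\bsy=2K\bsw-K\bsone$, $\bsw\in[0,1]^k$, which relates $I_{k,K}(g)$ to a unit-cube integral that the rescaled cubature $\overline{Q}_{n,R,K}$ realizes as an ordinary randomly shifted lattice rule. Under this substitution a first-order mixed derivative in the coordinates of a subset $\setu\subseteq\{1:k\}$ acquires a factor $(2K)^{|\setu|}$ and the whole integral a Jacobian $(2K)^k$; since $|\setu|\le k$, all these powers are bounded by a $K$-dependent constant and absorbed into the overall constant of the estimate, which is why no $K$-dependent factor appears in the per-coordinate weights.

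The key step is to feed the pointwise bound of Lemma~\ref{reg:outer} into the weighted unanchored Sobolev norm. Taking $\bsnu=\sum_{j\in\setu}\bse_j$ (so that $|\bsnu|=|\setu|$ and every nonzero component equals $1$), Lemma~\ref{reg:outer} yields an order-dependent product bound
$$
\Big|\partial_{\bsy}^{\bsnu}g(\bsy)\Big|\le C_0\,|\setu|!\,B^{|\setu|},\qquad
B:=\frac{1.1^kk\mu_{\min}^{-1/2}{\rm e}^{\frac{1}{2\mu_{\min}}(kK^2+2\sqrt k KC+C^2)}}{\log 2},
$$
with $C_0$ a fixed $\bsnu$-independent constant (the global factor $1.1^kk\mu_{\min}^{-1/2}{\rm e}$ of Lemma~\ref{reg:outer}). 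Because $[-K,K]^k$ is bounded and the bound is uniform in $\bsy$, the $L^2$ integrals defining the norm are controlled by this supremum, so that $\|g\|_{\mathcal W_{k,\boldsymbol\gamma}}^2\le\sum_{\setu\subseteq\{1:k\}}\gamma_{\setu}^{-1}(C_0|\setu|!B^{|\setu|})^2$ up to the scaling factors above.

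It then remains to balance this norm against the worst-case cubature factor $\big(\tfrac2n\sum_{\varnothing\neq\setu}\gamma_{\setu}^{\lambda}\varrho(\lambda)^{|\setu|}\big)^{1/(2\lambda)}$ from Lemma~\ref{lemma:affineqmc}. The two sums are minimized jointly by the standard choice $\gamma_{\setu}\propto\big(|\setu|!\prod_{j\in\setu}B/\sqrt{\varrho(\lambda)}\big)^{2/(1+\lambda)}$, which is precisely the order-dependent weight stated in the theorem; I would confirm this by the usual Hölder/Lagrange argument used to derive optimal order-dependent and POD weights. Choosing $\lambda=\frac{1}{2-2\delta}\in(\tfrac12,1]$ makes the exponent $1/(2\lambda)=1-\delta$, producing the asserted rate $n^{-1+\delta}$.

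The main obstacle here is conceptually mild rather than analytic: because the data dimension $k$ is fixed, every sum over $\setu\subseteq\{1:k\}$ is a finite sum of $2^k$ finite terms, so no summability hypothesis on the weights is required and the constant is automatically bounded (it does grow with $k$ and $K$ through $B$ and $\varrho(\lambda)$, which is why dimension-robustness in $k$ cannot be expected, in contrast to the parameter dimension $s$). The genuine care lies in (i) the change-of-variables bookkeeping, ensuring the $K$-dependent factors land in the constant and not in the rate, and (ii) verifying that the stated weights are indeed the balancing choice so that $\|g\|_{\mathcal W_{k,\boldsymbol\gamma}}$ and the cubature factor remain finite for every fixed $\delta>0$, the divergence of $\zeta(2\lambda)$ as $\lambda\downarrow\tfrac12$ being the sole reason one cannot take $\delta=0$.
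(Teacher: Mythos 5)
Your proposal is correct and follows exactly the route the paper intends: the paper gives no detailed proof of Theorem~\ref{thm2}, deferring to ``standard QMC theory,'' and your reconstruction---feeding the bound of Lemma~\ref{reg:outer} (written as $C_0\,|\setu|!\,B^{|\setu|}$ for multi-indices with entries in $\{0,1\}$) into the $\mathcal W_{k,\bsgamma}$-norm, absorbing the $K$-dependent change-of-variables factors into the constant as in Remark~\ref{rem:scale}, and applying Lemma~\ref{lemma:affineqmc} with the standard balancing choice of order-dependent weights and $\lambda=\tfrac{1}{2-2\delta}$---is precisely that standard argument. No gaps.
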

\rev{
\begin{remark}\label{rem:scale}\rm
For any prescribed tolerance $\varepsilon>0$, the truncation parameter $K$ can be chosen sufficiently large such that the cutoff error is bounded by $\varepsilon$. The subsequent change of variables from $[-K,K]^k$ to $[0,1]^k$ scales the derivative bounds by constants depending on $K$. In consequence, once $K=K(\varepsilon)$ has been fixed, the number of QMC points can be selected in order to achieve the desired accuracy for the truncated integral. The corresponding constants in the QMC error bounds depend on $K$, so that larger values of $K$ will require a larger number of QMC points as can be seen from Theorem \ref{thm2}.
\end{remark}}
\begin{remark}\rm
For clarity, we have omitted the dependence on the number of random shifts $R$ in the convergence rates of Theorems~\ref{thm1} and~\ref{thm2} as well as in the subsequent convergence analysis in Sections~\ref{sec:ftp} and~\ref{sec:stp}.
\end{remark}

\section{Full tensor product cubature for the double integral}\label{sec:ftp}
The presented regularity analysis allows the use of QMC integration for the inner and outer integral. A straightforward combination of both approximations, i.e., the inner and the outer integral, leads to the so-called full tensor grid approach. Recall that
$$
{\rm EIG}=\int_{\rev{\mathbb R^k}}\int_{{\Theta_s}}\log\bigg(\frac{\pi(\bstheta|\bsy,\bsxi)}{\pi(\bstheta)}\bigg)\pi(\bstheta|\bsy,\bsxi)\pi(\bsy|\bsxi)\,{\rm d}\bstheta\,{\rm d}\bsy
$$
for a given design $\bsxi\in\Xi$. The expected information gain can be equivalently formulated as
\begin{align*}
{\rm EIG}&=\log C_{k,\Gamma}-\frac{k}{2}\\
&\quad -\!\int_{\rev{\mathbb R^k}} \!\log\!\bigg(\!\int_{{\Theta_s}}\!\!\!C_{k,\Gamma}{\rm e}^{-\frac{1}{2} \|\boldsymbol y-G_s(\boldsymbol\theta,\boldsymbol\xi)\|_{\Gamma^{-1}}^2}{\rm d}\boldsymbol\theta\bigg)\!\!\int_{{\Theta_s}}\!\!\!C_{k,\Gamma}{\rm e}^{-\frac{1}{2} \|\boldsymbol y-G_s(\boldsymbol\theta,\boldsymbol\xi)\|_{\Gamma^{-1}}^2}{\rm d}\boldsymbol\theta{\rm d}\boldsymbol y
\end{align*}
i.e., the goal of the computation is the double integral
$$
\int_{\mathbb R^k}\log\bigg(\int_{\Theta_s} C_{k,{\Gamma}}{\rm e}^{-\frac{1}{2}\|\bsy-G_s(\bstheta,\bsxi)\|_{\Gamma^{-1}}^2}\,{\rm d}\bstheta\bigg)\int_{\Theta_s} C_{k,{\Gamma}}{\rm e}^{-\frac{1}{2}\|\bsy-G_s(\bstheta,\bsxi)\|_{\Gamma^{-1}}^2}\,{\rm d}\bstheta\,{\rm d}\bsy\,
$$ and after truncation of the integration domain
\begin{align*}
\mathcal I_K &=\int_{[-K,K]^k}\log\bigg(\int_{\Theta_s} C_{k,{\Gamma}}{\rm e}^{-\frac{1}{2}\|\bsy-G_s(\bstheta,\bsxi)\|_{\Gamma^{-1}}^2}\,{\rm d}\bstheta\bigg)\\
&\quad\quad\times \int_{\Theta_s} C_{k,{\Gamma}}{\rm e}^{-\frac{1}{2}\|\bsy-G_s(\bstheta,\bsxi)\|_{\Gamma^{-1}}^2}\,{\rm d}\bstheta\,{\rm d}\bsy.
\end{align*}

In this section, we let $g(x)=x\log x$, $U_m=[0,1]^m$, $\bsgamma=(\gamma_{\setu})_{\setu\subseteq\{1:m\}}$ is a sequence of positive weights, and denote by \cs{$\mathcal W_{m,\bsgamma}$} the unanchored, weighted Sobolev space of absolutely continuous functions $F\!:U_m\to\mathbb R$ with square-integrable first order mixed partial derivatives, equipped with the norm
$$
\|F\|_{\mathcal W_{m,\bsgamma}}^2=\sum_{\setu\subseteq\{1:m\}}\frac{1}{\gamma_{\setu}}\int_{\rev{U_{|\setu|}}}\bigg(\int_{\rev{U_{m-|\setu|}}}\partial_{\setu}F(\bsy)\,{\rm d}\bsy_{-\setu}\bigg)^2\,{\rm d}\bsy_{\setu}.
$$
In what follows, we shall focus on the cubature approximation of%
 \begin{align}
\mathcal I:=\int_{U_k} g\bigg(\int_{U_s} f(\bstheta,\bsy)\,{\rm d}\bstheta\bigg)\,{\rm d}\bsy,\notag%
\end{align}
with the understanding that we can recover the integral $\mathcal I_K$ corresponding to~\eqref{eq:integralofinterest} by an affine change of variables.

Defining a sequence of QMC cubature operators by
\begin{align}
I^{(1)}F:=\!\int_{U_k}\!F(\bsy)\,{\rm d}\bsy\approx 2^{-\ell}\!\sum_{i=1}^{2^{\ell}}F(\bsy_i^{(\ell)})=:\!Q_\ell^{(1)} F,\quad \ell=0,1,2,\ldots,\label{eq:l01}
\end{align}
where $\bsy_i^{(\ell)}=\{i\boldsymbol z/2^{\ell}+\boldsymbol\Delta\}$ denote $k$-dimensional lattice points~\eqref{eq:latticepoint} for $i\in\{1,\ldots,2^{\ell}\}$ and $\ell\geq 0$ with a single random shift $\boldsymbol\Delta\sim\mathcal U([0,1]^k)$ for a given function $F\in \mathcal W_{k,\boldsymbol\gamma_1}$ and
\begin{align}\label{eq:l02}
I^{(2)}F:=\!\int_{{U_s}}\!F(\bstheta)\,{\rm d}\bstheta\approx 2^{-\ell}\!\sum_{i=1}^{2^{\ell}}F(\bstheta_i^{(\ell)})=:\!Q_\ell^{(2)} F,\quad \ell=0,1,2,\ldots,
\end{align}
where $\bstheta_i^{(\ell)}$ similarly denote $s$-dimensional lattice points for $i\in\{1,\ldots,2^{\ell}\}$  for $\ell\geq 0$ with a single random shift for a given function $F\in \mathcal W_{s,\boldsymbol\gamma_2}$, where $\boldsymbol\gamma_1=(\gamma_{1,\setu})_{\setu\subseteq\{1:k\}}$ and $\boldsymbol\gamma_2=(\gamma_{2,\setu})_{\setu\subseteq\{1:s\}}$ are sequences of positive weights, we approximate the integral $\mathcal I$ by
$$
\mathcal I \approx {Q_\ell^{(1)}} g(Q_\ell^{(2)}f)\,.
$$
To this end, we assume that the QMC cubature operators satisfy the error bounds
\begin{align}\label{eq:approx1}
\| I^{(1)}F-Q_\ell^{(1)} F\|_{\boldsymbol{\Delta}}:=\sqrt{\mathbb{E}_{\cs{{\boldsymbol{\Delta}}}}[|I^{(1)}F-Q_\ell^{(1)} F|^2]}\leq C_12^{-\delta_1\ell}\|F\|_{\mathcal W_{k,\boldsymbol\gamma_1}}\quad
\end{align}
for all $F\in \mathcal W_{k,\boldsymbol\gamma_1}$
and
\begin{align}\label{eq:approx222}
\| I^{(2)}F- Q_\ell^{(2)} F\|_{\cs{{\boldsymbol{\Delta}}}}:=\sqrt{\mathbb{E}_{\boldsymbol{\Delta}}[|I^{(2)}F- Q_\ell^{(2)} F|^2]}\leq C_22^{-\delta_2\ell}\|F\|_{\mathcal W_{s,\boldsymbol\gamma_2}}\quad
\end{align}
for all $F\in\mathcal W_{s,\boldsymbol\gamma_2}$, with $C_1,C_2,\delta_1,\delta_2>0$ and $\ell\geq 0$. Furthermore, we assume that the integrand satisfies the following assumptions.

\begin{assumption}\label{assumptionone}\rm We assume that $f\!:U_s\times U_k\to\mathbb R$ is a continuous function which satisfies the following:
\begin{enumerate}[label=(A2.\arabic*),align=right,leftmargin=2.5\parindent]%
\item $\sup_{\bsy\in U_k}\|f(\cdot,\bsy)\|_{\cs{\mathcal W_{s,\bsgamma_2}}}\leq C_{k,\bsgamma_1}$ for some constant $C_{k,\bsgamma_1}>0$ independently of $\bsy\in U_{k}$; \label{eq:A21}
\item $g(I^{(2)}f)\in \cs{\mathcal W_{k,\bsgamma_1}}$ and $g(Q_{\ell}^{(2)}f)\in \cs{\mathcal W_{k,\bsgamma_1}}$ for all $\ell\geq 0$.\label{eq:A22}
\end{enumerate}
\end{assumption}

Then we have the following result.
\begin{theorem}
Suppose that $f\!:U_s\times U_k\to\mathbb R$ is a continuous function satisfying assumptions {\rm \ref{eq:A21}}--{\rm \ref{eq:A22}} and $g(x)=x\log x$. Then there holds for the cubature rules satisfying~\eqref{eq:approx1}--\eqref{eq:approx222} that%
   \begin{align*}%
&\| \mathcal I -  {Q_{\ell_1}^{(1)}} g(Q_{\ell_2}^{(2)}f)\|_{\cs{{\boldsymbol{\Delta}}}}\\
&\quad\le C_12^{-\delta_1\ell_1}\|g(I^{(2)}f)\|_{\mathcal W_{k,\boldsymbol\gamma_1}}+ \tilde C_22^{-\alpha \delta_2{\ell_2}}\sup_{\bsy\in U_k}\|f(\cdot, \bsy)\|_{\mathcal W_{s,\boldsymbol\gamma_2}}
\end{align*}
for $0<\alpha<1$. In particular, when using the same number of cubature points for the inner and outer integral, i.e., $\ell_1=\ell_2=\ell$, the rate of convergence is given by
\begin{align*}%
&\| \mathcal I -  {Q_{\ell}^{(1)}} g(Q_{\ell}^{(2)}f)\|_{\cs{{\boldsymbol{\Delta}}}}\le C\  2^{-\frac{\min\{\delta_1, \alpha \delta_2\}}{2} \ell }\,.
\end{align*}
\end{theorem}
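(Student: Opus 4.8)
The plan is to introduce the intermediate object $Q_{\ell_1}^{(1)}\big(g(I^{(2)}f)\big)$, in which the \emph{outer} integral has already been discretised but the \emph{inner} integral is still evaluated exactly, and to split the error around it. Writing $\mathcal I = I^{(1)}\big(g(I^{(2)}f)\big)$ and using that $\|\cdot\|_{\boldsymbol\Delta}=\sqrt{\mathbb E_{\boldsymbol\Delta}[|\cdot|^2]}$ (the root mean square over the two independent shifts) obeys the triangle inequality, I would bound
\begin{align*}
\big\|\mathcal I - Q_{\ell_1}^{(1)}g(Q_{\ell_2}^{(2)}f)\big\|_{\boldsymbol\Delta}
&\le \big\|I^{(1)}\big(g(I^{(2)}f)\big)-Q_{\ell_1}^{(1)}\big(g(I^{(2)}f)\big)\big\|_{\boldsymbol\Delta}\\
&\quad + \big\|Q_{\ell_1}^{(1)}\big(g(I^{(2)}f)-g(Q_{\ell_2}^{(2)}f)\big)\big\|_{\boldsymbol\Delta}=:E_{\rm out}+E_{\rm in}.
\end{align*}
The term $E_{\rm out}$ is immediate: by~\ref{eq:A22} the function $g(I^{(2)}f)$ lies in $\mathcal W_{k,\boldsymbol\gamma_1}$ and, the inner integral being exact, it carries no inner randomness, so the outer error bound~\eqref{eq:approx1} gives directly $E_{\rm out}\le C_1 2^{-\delta_1\ell_1}\|g(I^{(2)}f)\|_{\mathcal W_{k,\boldsymbol\gamma_1}}$, i.e.\ the first term of the claim.

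The difficulty, which I expect to be the main obstacle, is $E_{\rm in}$: the nonlinearity $g(x)=x\log x$ is \emph{not} Lipschitz at the origin (its derivative $\log x+1$ is unbounded as $x\to 0^+$), so I cannot pass the inner cubature error through $g$ by a mean value estimate. The remedy is to exploit that $g$ is Hölder continuous of \emph{every} order $\alpha\in(0,1)$ on bounded intervals: since $|g'(t)|=|\log t+1|\le C_\alpha\,t^{-(1-\alpha)}$ on $(0,M]$, integrating and using the subadditivity of $t\mapsto t^{\alpha}$ yields
\[
|g(a)-g(b)|\le C_\alpha\,|a-b|^{\alpha}\qquad\text{for all } a,b\in[0,M],
\]
where $M:=\sup_{U_s\times U_k}f<\infty$ uniformly bounds the nonnegative values $I^{(2)}f(\bsy)$ and $Q_{\ell_2}^{(2)}f(\bsy)$ (both convex averages of $f\ge 0$). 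Applying this at each outer node and abbreviating the inner cubature error by $e(\bsy):=I^{(2)}f(\bsy)-Q_{\ell_2}^{(2)}f(\bsy)$ gives the pointwise bound $|E_{\rm in}|\le C_\alpha\,2^{-\ell_1}\sum_{i=1}^{2^{\ell_1}}|e(\bsy_i)|^{\alpha}$.

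It then remains to take the root mean square over the two shifts (the outer one generating the $\bsy_i$, the inner one entering $e$). I would square, use Jensen's inequality $\big(2^{-\ell_1}\sum_i a_i\big)^2\le 2^{-\ell_1}\sum_i a_i^2$ to bring the square inside the outer average, and then, conditioning on the outer shift, apply Jensen once more to the concave map $t\mapsto t^{\alpha}$ to get $\mathbb E[|e(\bsy_i)|^{2\alpha}]\le\big(\mathbb E[|e(\bsy_i)|^{2}]\big)^{\alpha}$. The inner error bound~\eqref{eq:approx222} applied to $F=f(\cdot,\bsy_i)$ controls $\mathbb E[|e(\bsy_i)|^2]\le C_2^2\,2^{-2\delta_2\ell_2}\|f(\cdot,\bsy_i)\|^2_{\mathcal W_{s,\boldsymbol\gamma_2}}$, which by~\ref{eq:A21} is uniform in $\bsy_i$. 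Since the outer weights sum to one, the outer average and expectation collapse, leaving
\[
E_{\rm in}\le C_\alpha C_2^{\alpha}\,2^{-\alpha\delta_2\ell_2}\Big(\sup_{\bsy\in U_k}\|f(\cdot,\bsy)\|_{\mathcal W_{s,\boldsymbol\gamma_2}}\Big)^{\alpha}.
\]
Invoking the uniform bound~\ref{eq:A21} a final time to absorb the power $\alpha$ into a constant $\tilde C_2$ reproduces the second term of the estimate.

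Finally, for the equal-level case I would set $\ell_1=\ell_2=\ell$ and bound each geometric factor by the one with the smaller exponent, giving $\|\mathcal I-Q_\ell^{(1)}g(Q_\ell^{(2)}f)\|_{\boldsymbol\Delta}\le C\,2^{-\min\{\delta_1,\alpha\delta_2\}\ell}$; re-expressing this against the $2^{\ell_1}2^{\ell_2}=2^{2\ell}$ total forward evaluations of the full tensor grid (equivalently, distributing a common point budget equally between the two rules) halves the exponent and yields the stated rate $2^{-\frac{\min\{\delta_1,\alpha\delta_2\}}{2}\ell}$.
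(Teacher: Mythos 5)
Your proposal is correct and follows essentially the same route as the paper: the same splitting around the intermediate quantity $Q_{\ell_1}^{(1)}g(I^{(2)}f)$, the outer bound~\eqref{eq:approx1} for the first piece, and the H\"older continuity of $g(x)=x\log x$ combined with the inner bound~\eqref{eq:approx222} for the second. Your explicit Jensen-inequality argument for passing the exponent $\alpha$ through the root-mean-square is simply a more detailed rendering of the paper's one-line step $\|g(I^{(2)}f)-g(Q_{\ell_2}^{(2)}f)\|_{\boldsymbol{\Delta}}\le\|I^{(2)}f-Q_{\ell_2}^{(2)}f\|_{\boldsymbol{\Delta}}^{\alpha}$, and your reading of the halved exponent in the equal-level case matches the paper's intent.
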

\begin{proof}
Using the H\"older continuity of the mapping $g(x)=x\log x$, we obtain
\begin{align*}
&\| \mathcal I -  {Q_{\ell_1}^{(1)}} g(Q_{\ell_2}^{(2)}f)\|_{\cs{{\boldsymbol{\Delta}}}}\\
&\quad\le \| \mathcal I -  {Q_{\ell_1}^{(1)}} g(I^{(2)}f)\|_{\cs{{\boldsymbol{\Delta}}}} +\| {Q_{\ell_1}^{(1)}} g(I^{(2)}f) -  {Q_{\ell_1}^{(1)}} g(Q_{\ell_2}^{(2)}f)\|_{\cs{{\boldsymbol{\Delta}}}}\\
&\quad\le C_12^{-\delta_1\ell_1}\|g(I^{(2)}f)\|_{\mathcal W_{k,\boldsymbol\gamma_1}} + \sup_{\bsy\in U_k}  \| g(I^{(2)}f(\cdot, \bsy)) -  g(Q_{\ell_2}^{(2)}f(\cdot, \bsy))\|_{\cs{{\boldsymbol{\Delta}}}}\\
&\quad\le C_12^{-\delta_1\ell_1}\|g(I^{(2)}f)\|_{\mathcal W_{k,\boldsymbol\gamma_1}} + \sup_{\bsy\in U_k}  \| I^{(2)}f(\cdot, \bsy) -  Q_{\ell_2}^{(2)}f(\cdot, \bsy)\|^\alpha_{\cs{{\boldsymbol{\Delta}}}}\\
&\quad\le C_12^{-\delta_1\ell_1}\|g(I^{(2)}f)\|_{\mathcal W_{k,\boldsymbol\gamma_1}}+ \tilde C_22^{-\alpha \delta_2{\ell_2}}\sup_{\bsy\in U_k}\|f(\cdot, \bsy)\|_{\mathcal W_{s,\boldsymbol\gamma_2}}
\end{align*}
for $0<\alpha<1$.
The rate of convergence for the case $\ell_1=\ell_2=\ell$ directly follows from the general bound.\qed
\end{proof}
\begin{theorem}\label{lemma:ftprate}
Under assumptions {\rm \ref{eq:A2}}--{\rm \ref{eq:A3}}, with $p\in(0,\rev{\min\{\frac23,\frac1{\beta}\}}]$, $\rev{p\neq\frac1{\beta}}$, in~{\rm \ref{eq:A1}}, and $\ell_1=\ell_2=\ell$, the rate of convergence for full tensor product approximation of the double integral $\mathcal I_K$ satisfies
\begin{align*}%
&\textup{R.M.S.~error}\le C\  (2^\ell)^{-\frac12+\delta},
\end{align*}
where $C>0$ is a constant, $\delta>0$ is arbitrary, and the cubature point set of the outer cubature operator~\eqref{eq:l01} is scaled to the cube $[-K,K]^k$.
\end{theorem}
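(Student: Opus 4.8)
The plan is to obtain Theorem~\ref{lemma:ftprate} as a direct consequence of the general full tensor product error bound of the preceding (unlabeled) theorem, once its structural hypotheses~\ref{eq:A21}--\ref{eq:A22} have been verified for our concrete integrand and the individual QMC rates $\delta_1,\delta_2$ have been identified from Theorems~\ref{thm1} and~\ref{thm2}. First I would recast $\mathcal I_K$ into the canonical form $\mathcal I=\int_{U_k}g(\int_{U_s}f(\bstheta,\bsy)\,{\rm d}\bstheta)\,{\rm d}\bsy$ with $g(x)=x\log x$ and $f(\bstheta,\bsy)=C_{k,\Gamma}{\rm e}^{-\frac12\|\bsy-G_s(\bstheta,\bsxi)\|_{\Gamma^{-1}}^2}$ through the affine change of variables mapping $\Theta_s=[-\tfrac12,\tfrac12]^s$ onto $U_s$ (a measure-preserving map) and $[-K,K]^k$ onto $U_k$ (introducing a Jacobian factor $(2K)^k$ absorbed into the constants); the derivative bounds of Section~\ref{sec:reg} rescale accordingly by powers of $2$ and $2K$.

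Next I would verify the two assumptions. For~\ref{eq:A21}, Lemma~\ref{lemma:innerreg} bounds $\partial_\bstheta^\bsnu f$ uniformly in $\bsy\in[-K,K]^k$ and $\bsxi\in\Xi$ by $C^{|\bsnu|}2^{\beta(|\bsnu|-1)}\mu_{\min}^{-|\bsnu|/2}(|\bsnu|!)^\beta\bsb^\bsnu$ times a $k$-dependent constant; inserting this into the weighted Sobolev norm and choosing $\bsgamma_2$ to be the POD weights of Theorem~\ref{thm1} gives $\sup_\bsy\|f(\cdot,\bsy)\|_{\mathcal W_{s,\bsgamma_2}}\le C_{k,\bsgamma_1}<\infty$ independently of $\bsy$. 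For~\ref{eq:A22}, the membership $g(I^{(2)}f)\in\mathcal W_{k,\bsgamma_1}$ is exactly the content of Lemma~\ref{reg:outer}. The membership $g(Q_\ell^{(2)}f)\in\mathcal W_{k,\bsgamma_1}$ for every $\ell\ge0$ requires a parallel argument: since $Q_\ell^{(2)}f(\cdot,\bsy)=2^{-\ell}\sum_i f(\bstheta_i^{(\ell)},\bsy)$ is a convex average of positive quantities, its $\bsy$-derivatives are controlled term-by-term by the mixed regularity bound of Lemma~\ref{lemma:mixedregbound}, and a uniform positive lower bound $Q_\ell^{(2)}f\ge C_{k,\Gamma}{\rm e}^{-(kK^2+2\sqrt kKC+C^2)/(2\mu_{\min})}>0$ (using $\|G_s\|\le C$ from~\ref{eq:A1} with $\bsnu=\mathbf 0$) lets the Fa\`a di Bruno argument of Lemma~\ref{reg:outer} be repeated with constants uniform in $\ell$.

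With the hypotheses in hand, I would read off the individual rates: for $p\in(0,\min\{\tfrac23,\tfrac1\beta\}]$ with $p\neq\tfrac1\beta$, Theorem~\ref{thm1} furnishes the dimension-robust inner rate $\delta_2=1-\delta$, while Theorem~\ref{thm2} furnishes the outer rate $\delta_1=1-\delta$, both for arbitrary $\delta>0$. Substituting these into the preceding theorem's bound $C\,2^{-\frac{\min\{\delta_1,\alpha\delta_2\}}{2}\ell}$ and noting $\min\{\delta_1,\alpha\delta_2\}=\alpha(1-\delta)$ for $0<\alpha<1$, the exponent becomes $\frac{\alpha(1-\delta)}{2}$; letting $\alpha\uparrow1$ and absorbing $\delta$ yields the claimed R.M.S.~error $\le C\,(2^\ell)^{-\frac12+\delta}$.

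The main obstacle I expect is the verification of~\ref{eq:A22} for the \emph{quadrature} composition $g(Q_\ell^{(2)}f)$ with constants uniform in $\ell$: because $g'(x)=\log x+1$ is singular at $x=0$, one must preclude degeneracy of the discrete inner integral, which is precisely what the uniform lower bound above secures. Everything else is the bookkeeping of transporting the Section~\ref{sec:reg} bounds through the affine scaling and into the weighted norms.
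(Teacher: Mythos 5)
Your proposal is correct and follows essentially the same route as the paper, whose proof is a one-line appeal to the general full tensor product theorem with the hypotheses \ref{eq:A21}--\ref{eq:A22} verified via the regularity analysis of Subsections~\ref{sec:inner} and~\ref{sec:outer} and the rates $\delta_1,\delta_2$ read off from Theorems~\ref{thm1} and~\ref{thm2}. You in fact supply more detail than the paper does, in particular the uniform positive lower bound on $Q_\ell^{(2)}f$ needed to make sense of $g(Q_\ell^{(2)}f)$ in \ref{eq:A22}, which is a genuine subtlety the paper leaves implicit (it corresponds to the Jensen-type bound~\eqref{eq:jensen}).
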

\begin{proof}
The conditions~\ref{eq:A21}--\ref{eq:A22} for the model problem follow from the analysis carried out in Subsections~\ref{sec:inner} and~\ref{sec:outer}.\qed\end{proof}

Note that the rate of convergence is halved due to the nested integral in the design problem.
    
We also obtain the following result as a corollary.
\begin{corollary}
If the randomly shifted lattice rule~\eqref{eq:l02} for the inner integral is obtained using the CBC algorithm with the input weights described in Theorem~\ref{thm1}, then the constant $C>0$ in Theorem~\ref{lemma:ftprate} is independent of the dimension~$s$.
\end{corollary}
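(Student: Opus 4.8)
The plan is to trace the two-term error bound coming from the full tensor product estimate established immediately above and to check that neither of the resulting constant factors depends on the parameter dimension~$s$. Specializing that estimate to $\ell_1=\ell_2=\ell$, the root-mean-square error is controlled by
$$
C_12^{-\delta_1\ell}\|g(I^{(2)}f)\|_{\mathcal W_{k,\boldsymbol\gamma_1}}+\tilde C_22^{-\alpha\delta_2\ell}\sup_{\bsy\in U_k}\|f(\cdot,\bsy)\|_{\mathcal W_{s,\boldsymbol\gamma_2}},
$$
so that the constant $C$ appearing in Theorem~\ref{lemma:ftprate} is, up to the fixed and $s$-independent rate factors, a sum of the two products $C_1\|g(I^{(2)}f)\|_{\mathcal W_{k,\boldsymbol\gamma_1}}$ and $\tilde C_2\sup_{\bsy}\|f(\cdot,\bsy)\|_{\mathcal W_{s,\boldsymbol\gamma_2}}$. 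I would treat these products separately, arguing that the first is governed entirely by the fixed data dimension~$k$ while the second is precisely the quantity controlled by Theorem~\ref{thm1}.

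For the inner-integral product, the factor $\tilde C_2$ collects a power of the constant $C_2$ from~\eqref{eq:approx222} together with the Hölder constant of $g(x)=x\log x$ on the range of $I^{(2)}f$ and $Q_\ell^{(2)}f$; this range is uniformly bounded, and hence $s$-independent, since $f\le C_{k,\Gamma}$ while~\eqref{eq:jensen} supplies a matching lower bound. By construction the inner lattice rule uses the POD weights of Theorem~\ref{thm1}, and that theorem asserts exactly that the corresponding CBC error constant is independent of~$s$; this is where the $\ell^p(\mathbb N)$-summability of $\bsb$ in~\ref{eq:A1} enters, ensuring that the weight sum $\sum_{\varnothing\neq\setu\subseteq\{1:s\}}\gamma_{\setu}^\lambda\varrho(\lambda)^{|\setu|}$ in Lemma~\ref{lemma:affineqmc} stays bounded as $s\to\infty$. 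It then remains to bound $\sup_{\bsy}\|f(\cdot,\bsy)\|_{\mathcal W_{s,\boldsymbol\gamma_2}}$ uniformly in~$s$: I would insert the dimension-independent derivative bound of Lemma~\ref{lemma:innerreg} into the weighted Sobolev norm and observe that, for the POD weights of Theorem~\ref{thm1}, the resulting series over subsets $\setu\subseteq\{1:s\}$ converges to a limit independent of~$s$. This is precisely assumption~\ref{eq:A21} with an $s$-independent constant $C_{k,\boldsymbol\gamma_1}$.

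For the outer-integral product, the constant $C_1$ and the weights $\boldsymbol\gamma_1$ are attached to the $k$-dimensional data cube and therefore do not involve~$s$ at all. The only way $s$ could enter is through the norm $\|g(I^{(2)}f)\|_{\mathcal W_{k,\boldsymbol\gamma_1}}$, since $I^{(2)}f=\int_{U_s}f(\bstheta,\cdot)\,{\rm d}\bstheta$ is an integral over the $s$-dimensional parameter domain. However, Lemma~\ref{reg:outer} bounds the mixed $\bsy$-derivatives of $g(I^{(2)}f)=(I^{(2)}f)\log(I^{(2)}f)$ uniformly in $\bsy\in[-K,K]^k$ and $\bsxi\in\Xi$ by an expression that does not depend on~$s$; substituting this bound into the definition of $\|\cdot\|_{\mathcal W_{k,\boldsymbol\gamma_1}}$ shows that the outer norm is likewise $s$-independent.

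Combining the two observations, both summands carry constants bounded uniformly in~$s$, which yields the claim. I expect the main obstacle to be the uniform-in-$s$ estimate of $\sup_{\bsy}\|f(\cdot,\bsy)\|_{\mathcal W_{s,\boldsymbol\gamma_2}}$: one must verify that the specific POD weights of Theorem~\ref{thm1} balance the factorial-type growth $(|\bsnu|!)^{\beta}$ from Lemma~\ref{lemma:innerreg} against the $\ell^p$-decay of $\bsb$ so that the weighted sum is finite and stays bounded as $s\to\infty$. Since this trade-off is exactly the one underpinning the weight construction in Theorem~\ref{thm1}, the remaining steps amount to routine bookkeeping.
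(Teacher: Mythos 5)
Your proposal is correct and follows essentially the route the paper intends: the paper states this corollary without an explicit proof, but its reasoning (mirrored in the proof of the analogous corollary for the sparse tensor product) is exactly your tracing of the two constants — the outer factor $C_1\|g(I^{(2)}f)\|_{\mathcal W_{k,\boldsymbol\gamma_1}}$ controlled via Lemma~\ref{reg:outer} and depending only on $k$, and the inner factor controlled by the POD weights of Theorem~\ref{thm1} together with the derivative bounds of Lemma~\ref{lemma:innerreg} and the $\ell^p$-summability of $\bsb$. Your additional remark on the uniform boundedness of the range of $I^{(2)}f$ (needed for the H\"older constant of $x\log x$) is a correct and worthwhile detail that the paper leaves implicit.
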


\section{Sparse tensor product cubature for the double integral}\label{sec:stp}
Forming a direct composition of the QMC cubatures corresponding to both the inner and outer integral leads to a suboptimal cubature convergence rate as shown in the previous section. To this end, inspired by~\cite{griebel}, we construct instead a sparse tensor product cubature based on two families of QMC rules.

Let $Q_{\ell}^{(1)}$ and $Q_{\ell}^{(2)}$ be cubature rules satisfying~\eqref{eq:approx1}--\eqref{eq:approx222}. For the outer integral, we consider the following %
difference operators
$$
\Delta_\ell^{(1)} F:=\begin{cases}Q_{\ell}^{(1)} F-Q_{\ell-1}^{(1)}F&\text{if}~\ell \geq 1,\\ Q_{0}^{(1)}F&\text{if}~\ell=0\end{cases}
$$
for a function $F\in \mathcal W_{k,\boldsymbol\gamma_1}$. 
The triangle inequality and the approximation assumption on the cubature operators \eqref{eq:approx1} lead to a bound of the difference operators
\begin{align*}
\|\Delta_\ell^{(1)}\|_{\cs{\mathcal W_{k,\boldsymbol\gamma_1}}\to \mathbb R}&=\sup_{\substack{f\in \mathcal W_{k,\boldsymbol\gamma_1}\\ \|f\|_{\mathcal W_{k,\boldsymbol\gamma_1}}\le 1}} \| \Delta_\ell^{(1)} (f)\|_{\cs{{\boldsymbol{\Delta}}}}  \\
&\le  \sup_{\substack{f\in \mathcal W_{k,\boldsymbol\gamma_1}\\ \|f\|_{\mathcal W_{k,\boldsymbol\gamma_1}}\le 1}} \| I^{(1)}(f) - Q_\ell^{(1)}(f)\|_{\cs{{\boldsymbol{\Delta}}}} + \| I^{(1)}(f) - Q_{\ell-1}^{(1)}(f)\|_{\cs{{\boldsymbol{\Delta}}}}\\
&\le  \tilde C_12^{-\delta_1\ell}.%
\end{align*}

We further define the generalized difference operators for $g(x)=x\log x$ by setting
$$
 \Delta_{\ell}^{(2)}(F):=\begin{cases}g( Q_{\ell}^{(2)} F)-g ( Q_{\ell-1}^{(2)}F)&\text{if}~\ell\geq 1,\\ g( Q_0^{(2)}F)&\text{if}~\ell=0\end{cases}
$$
for a slightly more general sequence of cubature operators
\begin{align}\label{eq:shiftdef}
Q_{\ell}^{(2)}(F):=2^{-\ell-\ell_0^{(2)}}\sum_{i=1}^{2^{\ell+\ell_0^{(2)}}}F(\bstheta_i^{(\ell+\ell_0^{(2)})}),\quad \ell=0,1,2,\ldots,
\end{align}
i.e., they correspond to~\eqref{eq:shiftdef} for a fixed offset $\ell_0^{(2)}\in\mathbb N_0$, $F\in\mathcal W_{s,\bsgamma_2}$. We have introduced the offset parameter above in order to balance the error contributions stemming from the nonlinear term $g$ in the upcoming convergence analysis.

For $\sigma>0$ and $f\in C(U_s\times U_k)$, we define the generalized sparse grid cubature operator
$$
\mathcal Q_{L,\sigma}(f)=\sum_{\sigma\ell_1+\frac{\ell_2}{\sigma}\leq L}\Delta_{\ell_1}^{(1)} \Delta_{\ell_2}^{(2)}(f).
$$
This operator can alternatively be written as
\begin{align*}
\mathcal Q_{L,\sigma}(f)&=\sum_{\sigma\ell_1+\frac{\ell_2}{\sigma}\leq L}\Delta_{\ell_1}^{(1)}\Delta_{\ell_2}^{(2)}(f)=\sum_{\ell_1=0}^{L/\sigma}\sum_{\ell_2=0}^{\sigma L-\sigma^2\ell_1}\Delta_{\ell_1}^{(1)}\Delta_{\ell_2}^{(2)}(f)\\
&=\sum_{\ell_1=0}^{L/\sigma}\Delta_{\ell_1}^{(1)}\sum_{\ell_2=0}^{\sigma L-\sigma^2\ell_1}\Delta_{\ell_2}^{(2)}(f)=\sum_{\ell_1=0}^{L/\sigma}\Delta_{\ell_1}^{(1)}g( Q_{\sigma L-\sigma^2\ell_1}^{(2)}f).
\end{align*}

In addition to Assumptions~\ref{eq:A21}--\ref{eq:A22}, we will make the following assumptions about the integrand.
\begin{assumption}\label{assumptiontwo}\rm We assume that $f\!:U_s\times U_k\to\mathbb R$ is a continuous function which satisfies the following:
\begin{enumerate}[label=(A2.\arabic*),start=3,align=right,leftmargin=2.5\parindent]%
\item $\sup_{\|h\|_{H^{k}([0,1]^{k})}\leq 1}\|\langle h,f\rangle_{H^{k}([0,1]^{k})}\|_{\cs{\mathcal W_{s,\bsgamma_2}}}\leq C<\infty$ for some constant $C>0$ independently of $s$;\label{eq:A23}
\item There exists a constant $c>0$ such that $f(\bstheta,\bsy)\geq c$ for all $\bstheta\in U_s$ and $\bsy\in U_k$. \label{eq:A24}
\end{enumerate}
\end{assumption}

We obtain the following result.
\begin{theorem}\label{thm:stpwodimind}
 Suppose that $f\!:U_s\times U_k\to\mathbb R$ is a continuous function satisfying assumptions {\rm \ref{eq:A21}}--{\rm \ref{eq:A24}} and $g(x)=x\log x$. Then there exist constants $C_1, C_2, C_3 \ge 0$ such that the approximation error of the generalized sparse grid operator satisfies the bound
 \begin{align*}
\|\mathcal I-\mathcal Q_{L,\sigma}(f)\|_{\cs{{\boldsymbol{\Delta}}}}
&\leq C_12^{-\delta_1L/\sigma}\|g(I^{(2)}f)\|_{\mathcal W_{k,\boldsymbol\gamma_1}}\\
&\quad+C_2 2^{-\delta_2\sigma L} \frac{1-2^{-(\delta_1-\delta_2\sigma^2)(L/\sigma +1))}}{1-2^{-(\delta_1-\delta_2\sigma^2)}}\\
&\quad+C_3 \sum_{\ell_1=0}^{L/\sigma}  2^{-2\delta_2(\sigma L-\sigma^2\ell_1+\ell_0^{(2)})}
\end{align*}
if $\delta_1-\delta_2\sigma^2\neq 0$, and
 \begin{align*}
\|\mathcal I-\mathcal Q_{L,\sigma}(f)\|_{\cs{{\boldsymbol{\Delta}}}}
\leq C_1 2^{-\delta_2\sigma L}(L/\sigma +1)
+C_3 \sum_{\ell_1=0}^{L/\sigma}  2^{-2\delta_2(\sigma L-\sigma^2\ell_1+\ell_0^{(2)})}
\end{align*}
otherwise.
\end{theorem}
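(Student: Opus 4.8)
The plan is to exploit the telescoping structure of the two families of difference operators and to isolate the nonlinearity $g$ by a second-order Taylor expansion. First I would record the two telescoping identities: summing the generalized differences in the inner index gives $\sum_{\ell_2=0}^{M}\Delta_{\ell_2}^{(2)}(f)=g(Q_M^{(2)}f)$ (the nonlinearity sits inside each difference, so this is a genuine telescope), while $\sum_{\ell_1=0}^{N}\Delta_{\ell_1}^{(1)}F=Q_N^{(1)}F$. Letting $M,N\to\infty$ and using $Q_\ell^{(2)}f\to I^{(2)}f$ and $Q_\ell^{(1)}F\to I^{(1)}F$ shows $\mathcal I=I^{(1)}g(I^{(2)}f)=\sum_{\ell_1\ge 0}\Delta_{\ell_1}^{(1)}g(I^{(2)}f)$. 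Subtracting the sparse operator in its collapsed form $\mathcal Q_{L,\sigma}(f)=\sum_{\ell_1=0}^{L/\sigma}\Delta_{\ell_1}^{(1)}g(Q_{\sigma L-\sigma^2\ell_1}^{(2)}f)$, and adding and subtracting $\sum_{\ell_1=0}^{L/\sigma}\Delta_{\ell_1}^{(1)}g(I^{(2)}f)$, splits the error into an outer tail $[I^{(1)}-Q_{\lfloor L/\sigma\rfloor}^{(1)}]g(I^{(2)}f)$ and an inner-error part $\sum_{\ell_1=0}^{L/\sigma}\Delta_{\ell_1}^{(1)}\big[g(I^{(2)}f)-g(Q_{\sigma L-\sigma^2\ell_1}^{(2)}f)\big]$. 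The outer tail is bounded directly by the outer error estimate~\eqref{eq:approx1} together with~\ref{eq:A22}, producing the first summand $C_12^{-\delta_1 L/\sigma}\|g(I^{(2)}f)\|_{\mathcal W_{k,\boldsymbol\gamma_1}}$.

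For the inner-error part I would write, for each fixed $\ell_1$ with $L':=\sigma L-\sigma^2\ell_1$, the second-order Taylor expansion of $g(x)=x\log x$ around $\Phi:=I^{(2)}f$,
\[
g(\Phi)-g(Q_{L'}^{(2)}f)=g'(\Phi)\big(\Phi-Q_{L'}^{(2)}f\big)-\tfrac12 g''(\zeta)\big(\Phi-Q_{L'}^{(2)}f\big)^2,
\]
where $\zeta$ lies between $\Phi$ and $Q_{L'}^{(2)}f$. Assumption~\ref{eq:A24} guarantees $\Phi\ge c$ and, since $Q_{L'}^{(2)}$ is an averaging rule, also $Q_{L'}^{(2)}f\ge c$; together with the boundedness of $f$ on the compact set $U_s\times U_k$ this confines all arguments to a compact interval $[c,\|f\|_\infty]$ on which $g'$ and $g''$ are smooth and bounded. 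I would then treat the linear and the quadratic contributions by different mechanisms, which is precisely what produces the two remaining summands.

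For the linear contribution $\sum_{\ell_1}\Delta_{\ell_1}^{(1)}[g'(\Phi)(\Phi-Q_{L'}^{(2)}f)]$ I would use the operator-norm bound $\|\Delta_{\ell_1}^{(1)}\|_{\mathcal W_{k,\boldsymbol\gamma_1}\to\mathbb R}\le\tilde C_12^{-\delta_1\ell_1}$ established just above and control the $\mathcal W_{k,\boldsymbol\gamma_1}$-norm (in root-mean-square over the inner shift) of $g'(\Phi)(\Phi-Q_{L'}^{(2)}f)$ by the inner cubature error. As $g'(\Phi)$ is a fixed smooth bounded multiplier, the Leibniz rule reduces this to bounding mixed $\bsy$-derivatives of $(I^{(2)}-Q_{L'}^{(2)})f$; since inner cubature in $\bstheta$ commutes with differentiation in $\bsy$, assumption~\ref{eq:A23} is exactly what places these $\bsy$-derivatives, paired against $H^k$ test functions, in $\mathcal W_{s,\boldsymbol\gamma_2}$ with $s$-independent norm, so the inner estimate~\eqref{eq:approx222} yields a bound of order $2^{-\delta_2(L'+\ell_0^{(2)})}$. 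Multiplying by $2^{-\delta_1\ell_1}$ and summing the geometric series in $\ell_1$ (with $L'=\sigma L-\sigma^2\ell_1$, hence common ratio $2^{-(\delta_1-\delta_2\sigma^2)}$) gives the second summand, the factor $2^{-\delta_2\ell_0^{(2)}}$ being absorbed into $C_2$; the degenerate ratio $\delta_1=\delta_2\sigma^2$ collapses the sum to the factor $(L/\sigma+1)$ and yields the ``otherwise'' bound. For the quadratic remainder $\sum_{\ell_1}\Delta_{\ell_1}^{(1)}[-\tfrac12 g''(\zeta)(\Phi-Q_{L'}^{(2)}f)^2]$ I would instead use only the crude bound $\|\Delta_{\ell_1}^{(1)}\|_{C(U_k)\to\mathbb R}\le 2$ (no $\ell_1$-decay) together with the squared inner error $\|(I^{(2)}-Q_{L'}^{(2)})f\|_\infty^2\lesssim 2^{-2\delta_2(L'+\ell_0^{(2)})}$ and the boundedness of $g''$ on $[c,\|f\|_\infty]$, producing the third summand $C_3\sum_{\ell_1=0}^{L/\sigma}2^{-2\delta_2(\sigma L-\sigma^2\ell_1+\ell_0^{(2)})}$; reindexing by $j=L/\sigma-\ell_1$ shows this is a convergent geometric series whose limit is governed by $2^{-2\delta_2\ell_0^{(2)}}$, explaining why the offset $\ell_0^{(2)}$ must be chosen to balance the quadratic error.

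The main obstacle I expect is the linear-contribution estimate: rigorously bounding the mixed-first-order $\bsy$-derivatives of the inner cubature error in the outer weighted norm $\mathcal W_{k,\boldsymbol\gamma_1}$ with an $s$-independent constant, and doing so in the correct root-mean-square sense with respect to two \emph{independent} random shifts. The randomness is handled by conditioning on the inner shift, applying the outer operator-norm bound, and then taking the expectation over the inner shift under~\eqref{eq:approx222}; verifying that the smooth multiplier $g'(\Phi)$ and the dual pairing in~\ref{eq:A23} interact correctly with the commutation of inner cubature and outer differentiation is the delicate step. The remaining geometric-series bookkeeping and the separate treatment of the case $\delta_1=\delta_2\sigma^2$ are routine.
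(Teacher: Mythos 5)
Your proposal is correct and follows essentially the same route as the paper: the identical split into an outer tail plus $\sum_{\ell_1}\Delta_{\ell_1}^{(1)}[g(I^{(2)}f)-g(Q^{(2)}f)]$, the operator-norm bound on $\Delta_{\ell_1}^{(1)}$ combined with the $H^k$ duality pairing and \ref{eq:A23} for the linear part, and the lower bound \ref{eq:A24} plus the offset $\ell_0^{(2)}$ for the quadratic remainder, with the same geometric-series case distinction. The only cosmetic difference is that you organize the linearization as a Taylor expansion of $g$ about $I^{(2)}f$ with Lagrange remainder, whereas the paper splits $a\log a-b\log b$ algebraically into $(a-b)\log a$, $(a-b)$, and $b[\log(a/b)-(a/b-1)]$ — your $g'(\Phi)(\Phi-Q^{(2)}f)$ is exactly the sum of the paper's two linear terms and the remainders coincide.
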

\begin{proof}
For each shift $\boldsymbol\Delta$, we can split the approximation error of the generalized sparse grid operator as follows
\begin{align*}
&|\mathcal I-\mathcal Q_{L,\sigma}(f)|\notag\\
&=|\mathcal I-Q_{L/\sigma}^{(1)}g(I^{(2)}f)+Q_{L/\sigma}^{(1)}g(I^{(2)}f)-\mathcal Q_{L,\sigma}(f)|\notag\\
&\leq |(I^{(1)}-Q_{L/\sigma}^{(1)})g(I^{(2)}f)|+\bigg|\mathcal \displaystyle\sum_{\ell_1=0}^{L/\sigma}\Delta_{\ell_1}^{(1)}(g(I^{(2)}f)-g( Q_{\sigma L-\sigma^2\ell_1}^{(2)}f))\bigg|\notag\\
&\leq |(I^{(1)}-Q_{L/\sigma}^{(1)})g(I^{(2)}f)|\notag\\
& \quad +\bigg|\mathcal \displaystyle\sum_{\ell_1=0}^{L/\sigma}\Delta_{\ell_1}^{(1)}(I^{(2)}f- Q_{\sigma L-\sigma^2\ell_1}^{(2)}f) \log(I^{(2)}f)\bigg|\notag\\
& \quad +\bigg|\mathcal \displaystyle\sum_{\ell_1=0}^{L/\sigma}\Delta_{\ell_1}^{(1)}(\log(I^{(2)}f)-\log( Q_{\sigma L-\sigma^2\ell_1}^{(2)}f))Q_{\sigma L-\sigma^2\ell_1}^{(2)}f\bigg|\notag\\
&\leq |(I^{(1)}-Q_{L/\sigma}^{(1)})g(I^{(2)}f)| \\
&\quad+\bigg|\mathcal \displaystyle\sum_{\ell_1=0}^{L/\sigma}\Delta_{\ell_1}^{(1)}(I^{(2)}f- Q_{\sigma L-\sigma^2\ell_1}^{(2)}f) \log(I^{(2)}f)\bigg| \\
& \quad+\bigg|\mathcal \displaystyle\sum_{\ell_1=0}^{L/\sigma}\Delta_{\ell_1}^{(1)}\bigg(\frac{Q_{\sigma L-\sigma^2\ell_1}^{(2)}f- I^{(2)}f}{Q_{\sigma L-\sigma^2\ell_1}^{(2)}f}\bigg)Q_{\sigma L-\sigma^2\ell_1}^{(2)}f\bigg|\\
& \quad+\bigg|\mathcal \displaystyle\sum_{\ell_1=0}^{L/\sigma}\Delta_{\ell_1}^{(1)}\bigg(\log\bigg(\frac{I^{(2)}f}{Q_{\sigma L-\sigma^2\ell_1}^{(2)}f}\bigg)-\bigg(\frac{I^{(2)}f}{Q_{\sigma L-\sigma^2\ell_1}^{(2)}f}-1\bigg)\bigg)Q_{\sigma L-\sigma^2\ell_1}^{(2)}f\bigg|.
\end{align*}
The first term can be bounded by
\begin{align}
\|(I^{(1)}-Q_{L/\sigma}^{(1)})g(I^{(2)}f)\|_{\cs{{\boldsymbol{\Delta}}}}\leq C_12^{-\delta_1L/\sigma}\|g(I^{(2)}f)\|_{\mathcal W_{k,\boldsymbol\gamma_1}}\label{eq:dimind1}
\end{align}
by the approximation property~\eqref{eq:approx1}.

Further, we have for the second term
\begin{align}
&\bigg|\mathcal \displaystyle\sum_{\ell_1=0}^{L/\sigma}\Delta_{\ell_1}^{(1)}(I^{(2)}f- Q_{\sigma L-\sigma^2\ell_1}^{(2)}f) \log(I^{(2)}f)\bigg| \notag \\
&\quad \le \sum_{\ell_1=0}^{L/\sigma} \| \Delta_{\ell_1}^{(1)}\|_{\mathcal W_{k,\boldsymbol\gamma_1} \to \mathbb R} \| (I^{(2)}f- Q_{\sigma L-\sigma^2\ell_1}^{(2)}f) \log(I^{(2)}f)\|_{\mathcal W_{k,\boldsymbol\gamma_1}}.\notag
\end{align}
Here, we use the Leibniz product rule and Cauchy--Schwarz inequality to estimate
\begin{align*}
&\| (I^{(2)}f- Q_{\sigma L-\sigma^2\ell_1}^{(2)}f) \log(I^{(2)}f)\|_{\mathcal W_{k,\boldsymbol\gamma_1}}^2\\
&=\sum_{\setu\subseteq\{1:{k}\}}\frac{1}{\gamma_{1,\setu}} \int_{[0,1]^{|\setu|}}\\
&\!\quad\!\times\bigg(\!\int_{[0,1]^{{k}-|\setu|}}\partial_{\setu}((I^{(2)}f(\cdot,\bsy)-Q_{\sigma L-\sigma^2\ell_1}^{(2)}f(\cdot,\bsy))\log(I^{(2)}f(\cdot,\bsy))\,{\rm d}\bsy_{-\setu}\bigg)^{\!\!2}{\rm d}\bsy_{\setu}\\
&=\sum_{\setu\subseteq\{1:{k}\}}\frac{1}{\gamma_{1,\setu}}\int_{[0,1]^{|\setu|}}\\
&\quad \!\times\! \bigg(\!\int_{[0,1]^{{k}-\!|\setu|}}\!\sum_{\setv\subseteq\setu}\partial_{\setv}(I^{(2)}\!f(\cdot,\bsy)\!-\!Q_{\sigma L\!-\!\sigma^2\ell_1}^{(2)}f(\cdot,\bsy))\partial_{\!-\!\setv}\!\log(I^{(2)}\!f(\cdot,\bsy))\,{\rm d}\bsy_{-\!\setu}\!\bigg)^{\!\!2}{\rm d}\bsy_{\setu}\\
&\leq \sum_{\setu\subseteq\{1:{k}\}}\sum_{\setv\subseteq\setu}\frac{2^{|\setu|}}{\gamma_{1,\setu}}\int_{[0,1]^{|\setu|}}\\
&\quad\! \times\!\bigg(\!\int_{[0,1]^{{k}-\!|\setu|}}\!\partial_{\setv}(I^{(2)}\!f(\cdot,\bsy)\!-\!Q_{\sigma L-\sigma^2\ell_1}^{(2)}f(\cdot,\bsy))\partial_{-\setv}\log(I^{(2)}f(\cdot,\bsy))\,{\rm d}\bsy_{-\setu}\bigg)^2\,{\rm d}\bsy_{\setu}\\
&\leq \sum_{\setu\subseteq\{1:{k}\}}\frac{2^{{k}+|\setu|}}{\gamma_{1,\setu}}\int_{[0,1]^{k}}[\partial_{\setv}(I^{(2)}f(\cdot,\bsy)-Q_{\sigma L-\sigma^2\ell_1}^{(2)}f(\cdot,\bsy))]^2\\
&\quad\quad\quad\quad\quad\quad\quad\quad\quad\quad \!\times\![\partial_{-\setv}\log(I^{(2)}f(\cdot,\bsy))]^2\,{\rm d}\bsy\\
&\leq C_{\bsgamma_1,{k}}^2\|I^{(2)}f-Q_{\sigma L-\sigma^2\ell_1}^{(2)}f\|_{H^{k}([0,1]^{k})}^2,
\end{align*}
where we made use of the inequalities
$$
\sum_{\setu\subseteq\{1:k\}}\sum_{\setv \subseteq\setu}a_{\setv}\leq 2^{k}\sum_{\setu\subseteq\{1:k\}}a_{\setu}\quad\text{for all}~a_{\setu}\geq 0,~\setu\subseteq\{1:k\},
$$
and
$$
\sum_{\setu\subseteq\{1:k\}}\|\partial_{\setu}F\|_{L^2([0,1]^{k})}^2\leq \sum_{|\bsnu|\leq k}\|\partial^{\bsnu}F\|_{L^2([0,1]^{k})}^2=:\|F\|_{H^{k}([0,1]^{k})}^2,
$$
and define
\begin{align}\label{eq:dimind2}
C_{\bsgamma_1,k}:=\frac{2^k}{\sqrt{\min_{\setu\subseteq\{1:k\}}\gamma_{1,\setu}}}\max_{\substack{\bsy\in [0,1]^{k}\\ \setu\subseteq\{1:k\}}}|\partial_{\setu}\log (I^{(2)}f(\cdot,\bsy))|.
\end{align}
Furthermore,
\begin{align}
&\|I^{(2)}f-Q_{\sigma L-\sigma^2\ell_1}^{(2)}f\|_{H^{k}([0,1]^{k})}\notag\\
&=\sup_{\|h\|_{H^{k}([0,1]^{k})}\leq 1}|\langle h,I^{(2)}f-Q_{\sigma L-\sigma^2\ell_1}^{(2)}f\rangle_{H^{k}([0,1]^{k})}|\notag\\
&=\sup_{\|h\|_{H^{k}([0,1]^{k})}\leq 1}|(I^{(2)}-Q_{\sigma L-\sigma^2\ell_1}^{(2)})\langle h,f\rangle_{H^{k}([0,1]^{k})}|\notag\\
&\leq \|I^{(2)}-Q_{\sigma L-\sigma^2\ell_1}^{(2)}\|_{\cs{\mathcal W_{s,\bsgamma_2}}}\sup_{\|h\|_{H^{k}([0,1]^{k})}\leq 1}\|\langle h,f\rangle_{H^{k}([0,1]^{k})}\|_{\cs{\mathcal W_{s,\bsgamma_2}}}\label{eq:productbit}
\end{align}
and we obtain
\begin{align*}
&\bigg\|\mathcal \displaystyle\sum_{\ell_1=0}^{L/\sigma}\Delta_{\ell_1}^{(1)}(I^{(2)}f- Q_{\sigma L-\sigma^2\ell_1}^{(2)}f) \log(I^{(2)}f)\bigg\|_{\cs{{\boldsymbol{\Delta}}}} \\
&\quad \le \sum_{\ell_1=0}^{L/\sigma} \tilde C_2 2^{-\delta_1\ell_1}  2^{-\delta_2(\sigma L-\sigma^2\ell_1)}\notag\\
&\quad= \tilde C_2 2^{-\delta_2\sigma L} \frac{1-2^{-(\delta_1-\delta_2\sigma^2)(L/\sigma +1)}}{1-2^{-(\delta_1-\delta_2\sigma^2)}}\,,
\end{align*}
if $\delta_1-\delta_2\sigma^2\neq 0$, else we obtain the bound $\tilde C_2 2^{-\delta_2\sigma L}(L/\sigma +1)$.
Similarly, we obtain for the third term %
\begin{align*}
&\bigg|\mathcal \displaystyle\sum_{\ell_1=0}^{L/\sigma}\Delta_{\ell_1}^{(1)}\bigg(\frac{Q_{\sigma L-\sigma^2\ell_1}^{(2)}f- I^{(2)}f}{Q_{\sigma L-\sigma^2\ell_1}^{(2)}f}\bigg)Q_{\sigma L-\sigma^2\ell_1}^{(2)}f\bigg| \notag \\
&\quad \le \sum_{\ell_1=0}^{L/\sigma} \| \Delta_{\ell_1}^{(1)}\|_{\mathcal W_{k,\boldsymbol\gamma_1} \to \mathbb R} \| Q_{\sigma L-\sigma^2\ell_1}^{(2)}f- I^{(2)}f \|_{\mathcal W_{k,\boldsymbol\gamma_1}}\notag \\
&\quad=\sum_{\ell_1=0}^{L/\sigma}  \| \Delta_{\ell_1}^{(1)}\|_{\mathcal W_{k,\boldsymbol\gamma_1} \to \mathbb R} \sup_{\substack{h\in (\mathcal W_{k,\boldsymbol\gamma_1})'\\ \|h\|_{(\mathcal W_{k,\boldsymbol\gamma_1})'}\le 1}} |\langle h, I^{(2)}f- Q_{\sigma L-\sigma^2\ell_1}^{(2)}f\rangle_{(\mathcal W_{k,\boldsymbol\gamma_1})', \mathcal W_{k,\boldsymbol\gamma_1}}|\notag \\
&\quad \le \sum_{\ell_1=0}^{L/\sigma} \| \Delta_{\ell_1}^{(1)}\|_{\mathcal W_{k,\boldsymbol\gamma_1} \to \mathbb R} \|I^{(2)}- Q_{\sigma L-\sigma^2\ell_1}^{(2)}\|_{\mathcal W_{s,\boldsymbol\gamma_2}}\notag\\
&\qquad\times \sup_{h\in (\mathcal W_{k,\boldsymbol\gamma_1})', \|h\|_{(\mathcal W_{k,\boldsymbol\gamma_1})'}\le 1} \|\langle h, f\rangle_{(\mathcal W_{k,\boldsymbol\gamma_1})', \mathcal W_{k,\boldsymbol\gamma_1}}\|_{\mathcal W_{s,\boldsymbol\gamma_2}}. \notag 
\end{align*}
In complete analogy to the second term, there holds (with a modified constant $\tilde C_2$) such that
\begin{align*}
&\bigg\|\mathcal \displaystyle\sum_{\ell_1=0}^{L/\sigma}\Delta_{\ell_1}^{(1)}\bigg(\frac{Q_{\sigma L-\sigma^2\ell_1}^{(2)}f- I^{(2)}f}{Q_{\sigma L-\sigma^2\ell_1}^{(2)}f}\bigg)Q_{\sigma L-\sigma^2\ell_1}^{(2)}f\bigg\|_{\cs{{\boldsymbol{\Delta}}}}\\
&\le \sum_{\ell_1=0}^{L/\sigma} \tilde C_2 2^{-\delta_1\ell_1}  2^{-\delta_2(\sigma L-\sigma^2\ell_1)}\notag\\
&\quad= \tilde C_2 2^{-\delta_2\sigma L} \frac{1-2^{-(\delta_1-\delta_2\sigma^2)(L/\sigma +1)}}{1-2^{-(\delta_1-\delta_2\sigma^2)}}\,,
\end{align*}
if $\delta_1-\delta_2\sigma^2\neq 0$, else we obtain the bound $\tilde C_2 2^{-\delta_2\sigma L}(L/\sigma +1)$.
The last term results from the error of considering the linear approximation for the inner operator. However, this error can be made arbitrarily small by adjusting the first level of the inner approximation, i.e., we have
\begin{align*}
&\bigg|\mathcal \displaystyle\sum_{\ell_1=0}^{L/\sigma}\Delta_{\ell_1}^{(1)}\bigg(\log\bigg(\frac{I^{(2)}f}{Q_{\sigma L-\sigma^2\ell_1}^{(2)}f}\bigg)-\bigg(\frac{I^{(2)}f}{Q_{\sigma L-\sigma^2\ell_1}^{(2)}f}-1\bigg)\bigg)Q_{\sigma L-\sigma^2\ell_1}^{(2)}f\bigg| \\
&\le \sum_{\ell_1=0}^{L/\sigma} C \sup_{\bsy\in [0,1]^k} \bigg| \log\bigg(\frac{I^{(2)}f}{Q_{\sigma L-\sigma^2\ell_1}^{(2)}f}\bigg)-\bigg(\frac{I^{(2)}f}{Q_{\sigma L-\sigma^2\ell_1}^{(2)}f}-1\bigg) \bigg|\,.
\end{align*}
By Jensen's inequality,
$$
\frac{1}{Q_{\sigma L-\sigma^2\ell_1}^{(2)}f}\leq Q_{\sigma L-\sigma^2\ell_1}^{(2)}\bigg(\frac{1}{f}\bigg)\leq \frac{1}{c},
$$
which implies that
\begin{align*}
&\bigg\|\frac{I^{(2)}f}{Q_{\sigma L-\sigma^2\ell_1}^{(2)}f}-1\bigg\|_{\cs{{\boldsymbol{\Delta}}}}\le C 2^{-\delta_2(\sigma L-\sigma^2\ell_1+\ell_0^{(2)})}.
\end{align*}
So far, our analysis has been independent of the choice of the offset parameter $\ell_0^{(2)}$ in~\eqref{eq:shiftdef} since the error rate is not affected. However, here we estimate
\begin{align*}
&\bigg\|\mathcal \displaystyle\sum_{\ell_1=0}^{L/\sigma}\Delta_{\ell_1}^{(1)}\bigg(\log\bigg(\frac{I^{(2)}f}{Q_{\sigma L-\sigma^2\ell_1}^{(2)}f}\bigg)-\bigg(\frac{I^{(2)}f}{Q_{\sigma L-\sigma^2\ell_1}^{(2)}f}-1\bigg)\bigg)Q_{\sigma L-\sigma^2\ell_1}^{(2)}f\bigg\|_{\cs{{\boldsymbol{\Delta}}}} \\
&\le C_3 \sum_{\ell_1=0}^{L/\sigma}  2^{-\delta_2(\sigma L-\sigma^2\ell_1+\ell_0^{(2)})} \,
\end{align*}
with $C_3>0$, and choose the offset $\ell_0^{(2)}$ to be large enough to balance the contribution of this term with the other terms appearing in the overall error bound.%
\qed  
\end{proof}
The proof technique relies on the fact that the nonlinearity resulting from the logarithm can be bounded, i.e., the lower level approximation is already good enough. The error analysis is therefore tailored for the specific optimal design setting. We expect that similar strategies based on linearization can be applied to more general settings and will be subject to future work. Note that the analysis from \cite{griebel} does not give convergence in the current setting, since convergence rates of the QMC method are not available for the logarithm of the inner integral in the corresponding norm. Furthermore, the above strategy could be applied to other types of cubature operators.  
\rev{We keep the analysis for general $\sigma>0$ since this parameter encodes the effective regularity of the integrand and directly influences the attainable QMC and sparse grid convergence rates. Allowing for $\sigma\neq1$ makes it possible, at least in principle, to balance regularity assumptions against the number of quadrature points and hence to control the trade-off between different approximation errors; this idea was first explored in~\cite{griebel14}. While this flexibility is not exploited in the present numerical experiments, which focus on the standard choice $\sigma=1$ for clarity, the general setting is of independent theoretical interest and provides a basis for future adaptive strategies.}

\begin{theorem}\label{lemma:genericstp}
Under assumptions {\rm \ref{eq:A2}}--{\rm \ref{eq:A3}}, with $p\in(0,\rev{\min\{\frac23,\frac{1}{\beta}}\}]$, $\rev{p\neq\frac{1}{\beta}}$, in~{\rm \ref{eq:A1}}, and $\sigma= 1$, the rate of convergence for the sparse tensor product approximation of the double integral $\mathcal I_K$ satisfies
 \begin{align*}
\textup{R.M.S.~error}
\leq C (2^L)^{- 1+\delta}(L +1)
\end{align*}
for $\delta>0$ arbitrary and an appropriately chosen lower level  $\ell_0^{(2)}$ in~\eqref{eq:shiftdef}, where the cubature point set of the outer cubature operator~\eqref{eq:l01} is scaled to the cube $[-K,K]^k$.
\end{theorem}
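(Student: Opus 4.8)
The plan is to obtain Theorem~\ref{lemma:genericstp} as a concrete specialization of the abstract sparse-grid estimate of Theorem~\ref{thm:stpwodimind}, applied to the integrand $f(\bstheta,\bsy)=C_{k,\Gamma}\mathrm{e}^{-\frac12\|\bsy-G_s(\bstheta,\bsxi)\|_{\Gamma^{-1}}^2}$ viewed on $U_s\times U_k$ after affinely rescaling the outer domain from $[-K,K]^k$. Two things must be supplied: that the hypotheses~\ref{eq:A21}--\ref{eq:A24} hold for this $f$, and that the single-integral rates $\delta_1,\delta_2$ in~\eqref{eq:approx1}--\eqref{eq:approx222} are both arbitrarily close to $1$ in the regime $p\in(0,\min\{2/3,1/\beta\}]$. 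Once these are in place, the statement reduces to reading off the rate from Theorem~\ref{thm:stpwodimind} and balancing the offset $\ell_0^{(2)}$.

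First I would verify the structural assumptions. Assumption~\ref{eq:A21} is precisely Lemma~\ref{lemma:innerreg}, provided the inner POD weights $\bsgamma_2$ are taken as in Theorem~\ref{thm1}. Assumption~\ref{eq:A22} follows from the outer regularity bound of Lemma~\ref{reg:outer}. Assumption~\ref{eq:A24} holds because $f$ is a strictly positive Gaussian kernel while $\bsy$ ranges over the compact box $[-K,K]^k$ and $G_s$ is bounded uniformly on $\Theta_s\times\Xi$ by~\ref{eq:A1}, so $f\ge c>0$. Assumption~\ref{eq:A23} is the one requiring genuine care: writing $\langle h,f\rangle_{H^k(U_k)}=\sum_{|\bsalpha|\le k}\int_{U_k}\partial_{\bsy}^{\bsalpha}h\,\partial_{\bsy}^{\bsalpha}f\,\mathrm{d}\bsy$ and taking mixed $\bstheta$-derivatives under the integral, the $\mathcal{W}_{s,\bsgamma_2}$-norm of this quantity is controlled by the mixed-regularity bound of Lemma~\ref{lemma:mixedregbound}; since $k$ is fixed, the factors $\bseta!$ with $|\bseta|\le k$ are absorbed into a $k$-dependent constant, and the crucial point is that the \emph{same} POD weights $\bsgamma_2$ from Theorem~\ref{thm1} also render this bound finite uniformly in $s$.

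Next I would pin down the rates. For $p\in(0,\min\{2/3,1/\beta\}]$ with $p\neq1/\beta$, Theorem~\ref{thm1} operates in the case $\lambda=\tfrac1{2-2\delta}$, and since $-1/p+1/2\le-1$ throughout this range the inner rate is $n^{-1+\delta}$, so~\eqref{eq:approx222} holds with $\delta_2=1-\delta$; Theorem~\ref{thm2} directly gives the outer rate $n^{-1+\delta}$, so~\eqref{eq:approx1} holds with $\delta_1=1-\delta$. Taking the same $\delta>0$ in both places, the choice $\sigma=1$ puts us in the critical regime $\delta_1-\delta_2\sigma^2=0$, so the second branch of Theorem~\ref{thm:stpwodimind} applies and yields
\begin{align*}
\|\mathcal I-\mathcal Q_{L,1}(f)\|_{\boldsymbol{\Delta}}
\le C_1\,2^{-(1-\delta)L}(L+1)
+C_3\sum_{\ell_1=0}^{L}2^{-2(1-\delta)(L-\ell_1+\ell_0^{(2)})}.
\end{align*}

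The main obstacle --- indeed the only real subtlety --- is the last term, which originates from the linearization of $g(x)=x\log x$ and therefore carries the \emph{doubled} exponent $2\delta_2$. Reindexing by $j=L-\ell_1$ gives the geometric sum
\begin{align*}
\sum_{\ell_1=0}^{L}2^{-2(1-\delta)(L-\ell_1+\ell_0^{(2)})}
=2^{-2(1-\delta)\ell_0^{(2)}}\sum_{j=0}^{L}2^{-2(1-\delta)j}
\le\frac{2^{-2(1-\delta)\ell_0^{(2)}}}{1-2^{-2(1-\delta)}},
\end{align*}
which does \emph{not} decay in $L$ for a fixed offset. This is exactly why $\ell_0^{(2)}$ in~\eqref{eq:shiftdef} must be permitted to grow with $L$: choosing $\ell_0^{(2)}=\lceil L/2\rceil$ forces $2^{-2(1-\delta)\ell_0^{(2)}}\le 2^{-(1-\delta)L}$, so this term is absorbed at the rate $2^{-(1-\delta)L}$. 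Combining the two contributions gives
\begin{align*}
\|\mathcal I-\mathcal Q_{L,1}(f)\|_{\boldsymbol{\Delta}}\le C\,2^{-(1-\delta)L}(L+1)=C\,(2^L)^{-1+\delta}(L+1),
\end{align*}
which is the claimed estimate with $\delta>0$ arbitrary, after the affine rescaling of the outer cubature points to $[-K,K]^k$.
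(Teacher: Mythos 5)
Your proof is correct and follows essentially the same route as the paper: verify Assumptions~\ref{eq:A21}--\ref{eq:A24} via the regularity results of Section~\ref{sec:reg}, identify $\delta_1=\delta_2=1-\delta$ from Theorems~\ref{thm1} and~\ref{thm2} (using that $-1/p+1/2\le -1$ for $p\le 2/3$), and read off the critical branch $\delta_1-\delta_2\sigma^2=0$ of Theorem~\ref{thm:stpwodimind}. You in fact supply details the paper leaves implicit --- notably that a fixed offset $\ell_0^{(2)}$ leaves a non-decaying remainder and that a choice such as $\ell_0^{(2)}=\lceil L/2\rceil$ is what ``appropriately chosen lower level'' must mean for the stated rate to hold.
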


\begin{proof}
The conditions~\ref{eq:A21}--\ref{eq:A24} are satisfied by the parametric analysis carried out in Section~\ref{sec:reg}. Especially, conditions~\ref{eq:A21}--\ref{eq:A22} follow from the analysis in Subsections~\ref{sec:inner} and~\ref{sec:outer}, \ref{eq:A23} follows from the mixed regularity analysis in Subsection~\ref{sec:mixed}, and \ref{eq:A24} follows similarly to~\eqref{eq:jensen}.\qed
\end{proof}

Similarly to the case of the full tensor cubature, we also obtain the following as a corollary.

\begin{corollary}
If the randomly shifted lattice rule~\eqref{eq:l02} for the inner integral is obtained using the CBC algorithm with the input weights given by
\begin{align}
\gamma_{2,\setu}=\bigg(\rev{(|\setu|!)^{\beta}}\prod_{j\in\setu}\frac{\rev{c}_j}{\sqrt{2\zeta(2\lambda)/(2\pi^2)^\lambda}}\bigg)^{\frac{2}{\lambda+1}},\quad \lambda=\frac{p}{2-p},\label{eq:newweightsX}
\end{align}
where $\rev{c}_j=4^\beta\rev{C}R\mu_{\min}^{-1}b_j$  and  $\delta\in(0,1/2)$ is arbitrary, then the constant $C>0$ in Theorem~\ref{lemma:genericstp} is independent of the dimension $s$.
\end{corollary}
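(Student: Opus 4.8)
The plan is to trace precisely where the dimension $s$ can enter the constant $C$ of Theorem~\ref{lemma:genericstp} and to check that the choice~\eqref{eq:newweightsX} neutralises every such occurrence. Returning to the proof of Theorem~\ref{thm:stpwodimind}, the first error contribution $C_1 2^{-\delta_1 L/\sigma}\|g(I^{(2)}f)\|_{\mathcal{W}_{k,\boldsymbol\gamma_1}}$ lives entirely on the $k$-dimensional data domain; since $\bstheta$ has already been integrated out in $I^{(2)}f$, the bound of Lemma~\ref{reg:outer} depends only on $k$, $K$, $C$ and $\mu_{\min}$, hence is $s$-independent, as are $C_1$ and $\delta_1$, which govern the outer ($k$-dimensional) rule. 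The remaining contributions (the second, third and offset terms) all reduce, via~\eqref{eq:productbit} and Assumption~\ref{eq:A23}, to a product of the inner QMC factor $\|I^{(2)}-Q^{(2)}_\ell\|_{\mathcal{W}_{s,\boldsymbol\gamma_2}}$ and the quantity $\sup_{\|h\|_{H^k}\le 1}\|\langle h,f\rangle_{H^k}\|_{\mathcal{W}_{s,\boldsymbol\gamma_2}}$. Thus it suffices to verify that both the constant $C_2$ in~\eqref{eq:approx222} and the constant in Assumption~\ref{eq:A23} are independent of $s$.

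For the first of these I would invoke Lemma~\ref{lemma:affineqmc}. The CBC error factor $\big(\tfrac2n\sum_{\varnothing\neq\setu\subseteq\{1:s\}}\gamma_{2,\setu}^\lambda\varrho(\lambda)^{|\setu|}\big)^{1/(2\lambda)}$, multiplied by the Sobolev norm, is the standard POD balancing problem: bounding the first-order mixed $\bstheta$-derivatives of $\langle h,f\rangle$ by the mixed-regularity estimate of Lemma~\ref{lemma:mixedregbound} gives $|\partial_{\bstheta}^{\mathbf 1_{\setu}}\langle h,f\rangle|\le (\mathrm{const}_k)\,(|\setu|!)^\beta\prod_{j\in\setu}c_j$, with $c_j$ the per-coordinate constant of~\eqref{eq:newweightsX}, proportional to $b_j$ with a factor depending only on $\beta,C,R,\mu_{\min}$. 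Choosing $\gamma_{2,\setu}$ exactly as in~\eqref{eq:newweightsX} is the minimiser of the product of the two factors, and with this choice both the CBC sum and the squared norm coincide with
\[
S_s:=\sum_{\setu\subseteq\{1:s\}}\Big((|\setu|!)^\beta\textstyle\prod_{j\in\setu}c_j\Big)^{\frac{2\lambda}{1+\lambda}}\varrho(\lambda)^{\frac{|\setu|}{1+\lambda}},
\]
so that the $s$-dependent part of the product is a power of $S_s$.

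The key computation is to bound $S_s$ uniformly in $s$. With $\lambda=p/(2-p)$ one has $\tfrac{2\lambda}{1+\lambda}=p$, so grouping by cardinality $\ell=|\setu|$ and applying the elementary bound $\sum_{|\setu|=\ell}\prod_{j\in\setu}c_j^{p}\le \tfrac{1}{\ell!}\big(\sum_{j\ge1}c_j^{p}\big)^{\ell}$ yields
\[
S_s\le\sum_{\ell=0}^{\infty}(\ell!)^{\beta p-1}\Big(\varrho(\lambda)^{\frac{1}{1+\lambda}}\|\boldsymbol c\|_{\ell^p}^{p}\Big)^{\ell}.
\]
Since $c_j$ is proportional to $b_j$ and $\bsb\in\ell^p(\mathbb N)$, the quantity $\|\boldsymbol c\|_{\ell^p}^{p}$ is finite and independent of $s$. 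The main obstacle — and the point where the Gevrey hypothesis is essential — is the convergence of this series: it converges precisely because $\beta p<1$, which is guaranteed by $p\in(0,\min\{\tfrac23,\tfrac1\beta\}]$ with $p\neq\tfrac1\beta$, so that the factorial decay $(\ell!)^{\beta p-1}$ overwhelms the geometric growth for every value of the ratio and yields a bound on $S_s$ depending only on $\beta,C,R,\mu_{\min}$ and $\|\bsb\|_{\ell^p}$.

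Finally I would observe that the very same sum $S_s$ controls Assumption~\ref{eq:A23}, since $\|\langle h,f\rangle\|_{\mathcal{W}_{s,\boldsymbol\gamma_2}}^2=\sum_{\setu}\gamma_{2,\setu}^{-1}(\cdots)^2$ is bounded by $(\mathrm{const}_k)^2\,S_s$ under the same weights and derivative estimate, uniformly over $\|h\|_{H^k}\le1$. Collecting the pieces, $C_2$ in~\eqref{eq:approx222} and the constant in~\ref{eq:A23} are both $s$-independent, the outer term is $s$-independent by Lemma~\ref{reg:outer}, and hence the constant $C$ in Theorem~\ref{lemma:genericstp} does not depend on $s$, as claimed.
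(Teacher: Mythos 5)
Your proposal is correct and follows essentially the same route as the paper: the outer term and $C_{\bsgamma_1,k}$ are dispensed with via the data-domain regularity, Assumption~\ref{eq:A23} and the inner QMC factor are controlled through Lemma~\ref{lemma:mixedregbound} and the POD weights~\eqref{eq:newweightsX}, exactly as in the paper's argument. The only difference is that you spell out the step the paper delegates to ``standard QMC theory''~\cite{kuonuyenssurvey}, namely that the balanced weights reduce both the CBC factor and the weighted norm to the sum $S_s$, whose uniform boundedness follows from $\tfrac{2\lambda}{1+\lambda}=p$, $\bsb\in\ell^p(\mathbb N)$ and $\beta p<1$ --- a worthwhile expansion, and all the details check out.
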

\begin{proof}
The choice of weights~\eqref{eq:newweightsX} ensures that the term $\|g(I^{(2)}f)\|_{\mathcal W_{k,\bsgamma_1}}$ in~\eqref{eq:dimind1} and the constant $C_{\bsgamma_1,k}$ in~\eqref{eq:dimind2} can be bounded independently of $s$. %

In~\eqref{eq:productbit}, we can estimate
\begin{align*}
&\sup_{\|h\|_{H^k([0,1]^k)}\leq 1}\|\langle h,f\rangle_{H^k([0,1]^k)}\|_{\cs{\mathcal W_{s,\bsgamma_2}}}^2\\
&\leq \sum_{\setu\subseteq\{1:s\}}\frac{1}{\gamma_{2,\setu}}\int_{[0,1]^s}|\langle h,\partial_{\setu}f\rangle_{H^k([0,1]^k)}|^2\,{\rm d}\bstheta\\
&\leq \sum_{\setu\subseteq\{1:s\}}\frac{1}{\gamma_{2,\setu}}\int_{[0,1]^s}\|\partial_{\setu}f\|_{H^k([0,1]^k)}^2\,{\rm d}\bstheta\\
&\leq \!\rev{1.1^{2k}\!\cdot\! 2^{2\beta (k-1)}k^2{\rm e}^{4k}\!\bigg(\!\sum_{\substack{\bseta\in\mathbb N_0^k\\ |\bseta|\leq k}}(\bseta!)^2\!\bigg)\!\!\sum_{\setu\subseteq\{1:s\}}\!\!\frac{1}{\gamma_{2,\setu}}(4^\beta C)^{2|\setu|}\mu_{\min}^{-2|\setu|-1}R^{2|\setu|}(|\setu|!)^{2\beta}\bsb_{\setu}^2}
\end{align*}
by Lemma~\ref{lemma:mixedregbound}, and it is a consequence of standard QMC theory~\cite{kuonuyenssurvey} that the choice of weights~\eqref{eq:newweightsX} results in the dimension independence of the constant $\tilde C_2>0$ in the proof of Theorem~\ref{thm:stpwodimind}. The dimension independence of the remaining constants follows from this.\qed
\end{proof}

\section{Numerical experiments}\label{sec:numex}
Let $D=(0,1)^2$. We consider the elliptic PDE
\begin{align}
\begin{cases}
-\nabla \cdot (a(\bsx,\bstheta)\nabla u(\bsx,\bstheta))=10x_1,&\bsx\in D,~\bstheta\in [-1/2,1/2]^{100},\\
u(\cdot,\bstheta)|_{\partial D}=0,&\bstheta\in [-1/2,1/2]^{100},
\end{cases}\label{eq:numexpde}
\end{align}
equipped with the parametric PDE coefficients
\begin{itemize}[align=right,leftmargin=1.2\parindent]
\item[(i)]\! $a(\bsx,\bstheta)\!=\!1+0.1\!\sum_{j=1}^{100} j^{-2}\theta_j\sin(\pi j x_1)\sin(\pi j x_2)$, $\bstheta\in [-1/2,1/2]^{100}$;
\item[(ii)]\! $a(\bsx,\bstheta)\!=\!1+\frac{0.1}{\sqrt 6}\!\sum_{j=1}^{100} j^{-2}\!\sin(2\pi \theta_j)\sin(\pi j x_1)\sin(\pi j x_2)$, $\bstheta\in [-1/2,1/2]^{100}$.
\end{itemize}
It is a consequence of standard elliptic regularity theory that the variational solution corresponding to the problem~\eqref{eq:numexpde} satisfies $u(\cdot,\bstheta)\in H^2(D)\cap H_0^1(D)$ for all $\bstheta\in[-1/2,1/2]^{100}$. Especially, there exists a solution to the variational formulation of the PDE which is continuous with respect to the spatial variable $\bsx\in D$ for all $\bstheta\in[-1/2,1/2]^{100}$ by the standard Sobolev embedding---meaning that point evaluation is a bounded operation. Assumption~\ref{eq:A1} has been verified, e.g., in \cite{cohen10}, and Assumptions~\ref{eq:A2} and~\ref{eq:A3} are trivially fulfilled.

The goal is to find a design $\bsxi^*$ from the set $$\Xi=\{(\bsx_1,\bsx_2,\bsx_3)\in\Upsilon^3\mid \bsx_i\ne \bsx_j~\text{for}~i\neq j\},$$where
\begin{align*}
\Upsilon=&\{(0.25,0.25),(0.25,0.50),(0.25,0.75),\\
&~(0.50,0.25),(0.50,0.50),(0.50,0.75),\\
&~(0.75,0.25),(0.75,0.50),(0.75,0.75)\},
\end{align*}
maximizing the expected information gain~\eqref{eq:numexref} subject to the observation operator
$$
G_{100}(\bstheta,\bsxi)=(u(\bsx,\bstheta))_{\bsx\in \bsxi},\quad \bstheta\in[-1/2,1/2]^{100},~\bsxi\in\Xi.
$$

First, we investigate the numerical approximation of the high-dimensional double integral~\eqref{eq:integralofinterest} appearing in the expression for the EIG~\eqref{eq:numexref}. To this end, we set \rev{$K=1/2$, i.e. the truncated integration domain is $[-1/2,1/2]^k$} with $k=3$ and $\Gamma=0.01I_k$ for the estimated noise level. \rev{The value of the cutoff $K=1/2$ chosen here is justified by the fact that the range of the forward mapping is approximately $[0.38,0.41]$. Using a larger value of the cutoff would lead to longer preasymptotic regimes since most of the cubature nodes would ``miss'' the region containing most of the mass of the integrand. In a practical implementation, one could use, e.g., Laplace approximation to detect where the mass of the posterior distribution lies.}

\rev{We }use the following approximation schemes:
\begin{itemize}[align=right,leftmargin=1.5\parindent]%
\item[(a)] Full tensor product (FTP) cubature: we take the composition of two randomly shifted rank-1 lattice rules $Q_\ell^{(1)}$ and $Q_\ell^{(2)}$ consisting of $n=2^{\ell+1}$ cubature nodes for $\ell=0,1,2,\ldots$. The expected convergence rate in this case is essentially $\mathcal O(N^{-1/2})$, where $N=n^2$ is the total number of integrand evaluations.
\item[(b)] Sparse tensor product (STP) cubature: we use Smolyak's construction to form a cubature rule for the double integral, viz.
$$
\mathcal Q_L=\sum_{\ell_1+\ell_2\leq L}\Delta_{\ell_1}^{(1)}\Delta_{\ell_2}^{(2)},
$$
where $g(x)=x\log x$ and the difference cubature operators are defined by
\begin{align*}
&\Delta_{\ell}^{(1)}F:=\begin{cases}Q_{\ell}^{(1)}F-Q_{\ell-1}^{(1)}F&\text{if}~\ell>0,\\ Q_0^{(1)}F&\text{if}~\ell=0,\end{cases}\\
&\Delta_{\ell}^{(2)}F:=\begin{cases}g(Q_{\ell}^{(2)}F)-g(Q_{\ell-1}^{(2)}F)&\text{if}~\ell>0,\\ g(Q_0^{(2)}F)&\text{if}~\ell=0.\end{cases}
\end{align*} 
Here, $Q_{\ell}^{(1)}$ and $Q_{\ell}^{(2)}$ denote randomly shifted rank-1 lattice rules with $n=2^{\ell+1}$ cubature nodes for $\ell=0,1,2,\ldots$. The expected convergence rate in this case is essentially $\mathcal O(N^{-1}\log N)$ for problem (i), where $N$ is the total number of integrand evaluations.
\item[(c)] In order to extract a theoretically advantageous rate for the periodic parameterization together with the STP construction, we repeat experiments (a) and (b) for the periodically parameterized input random field by replacing the cubatures $Q_{\ell}^{(2)}$ corresponding to the outer integrals over ${\rev{\mathbb R^k}}$ with a $k$-dimensional Smolyak cubature rule
$$
Q_{\ell-2}^{(2)}=\sum_{\substack{\max\{0,\ell-k+1\}\leq |\bsalpha|\leq \ell\\ \bsalpha\in\mathbb N_0^k}}(-1)^{\ell-|\bsalpha|}\binom{k-1}{\ell-|\bsalpha|}\bigotimes_{j=1}^k \mathcal{U}_{\alpha_j},\quad \ell=2,3,\ldots,
$$
where $\mathcal{U}_{m}$ is a univariate trapezoidal rule with $n=\begin{cases}1&\text{if}~m=0\\2^m+1&\text{otherwise}\end{cases}$ nodes. Note that we have shifted the indexing of the outer cubature rules by 2 in order to balance the number of function evaluations with the inner integral for larger $n$.
\end{itemize}

\begin{figure}[!t]
\subfloat{
\begin{tikzpicture}
\node (img) {\includegraphics[height=.365\textwidth,trim=.5cm .5cm 0cm 0cm,clip]{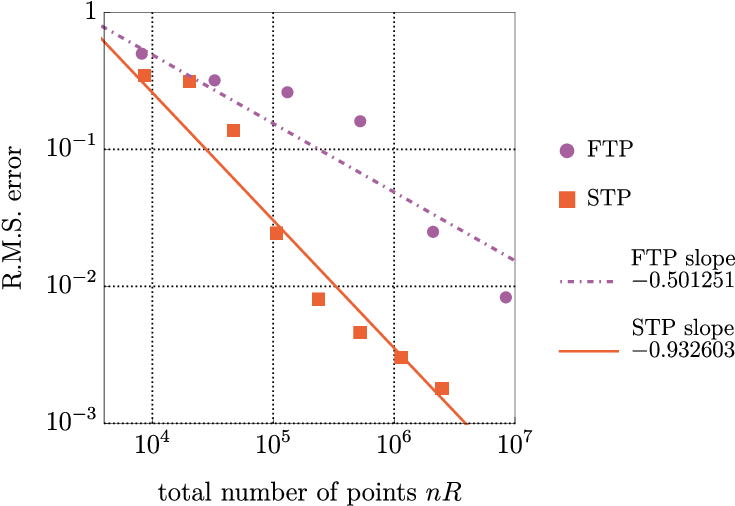}};
\node[below=of img,node distance=0cm,xshift=-.55cm,yshift=1.2cm]{\!\!\!\!\!total number of points $NR$};
\node[left=of img,node distance=0cm,rotate=90,anchor=center,xshift=.25cm,yshift=-1.0cm]{R.M.S. error};
\end{tikzpicture}}
\includegraphics[height=.428\textwidth,trim=0cm 0cm 0cm -0.85cm,clip]{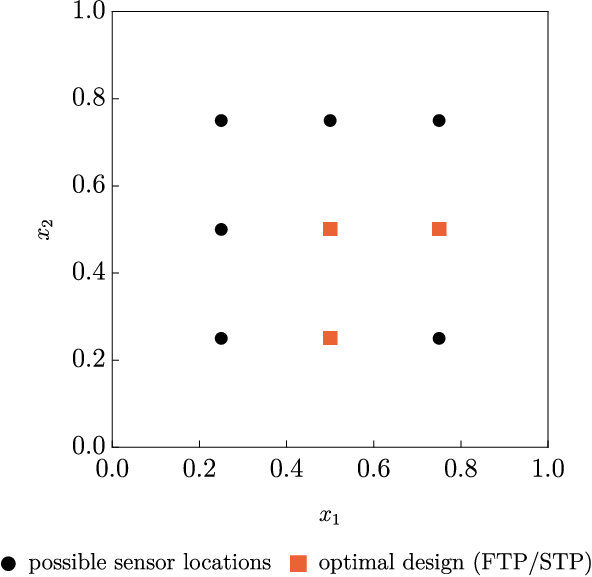}
\caption{Left: The root-mean-square (R.M.S.) cubature errors of the FTP method (a) and STP method (b) applied to the PDE problem~\eqref{eq:numexpde} subject to an affine and uniform representation of the input random field (i) corresponding to the optimal design. Right: The optimal design obtained using both approaches.}\label{fig:1}
\subfloat{
\begin{tikzpicture}
\node (img) {\includegraphics[height=.365\textwidth,trim=.5cm .5cm 0cm 0cm,clip]{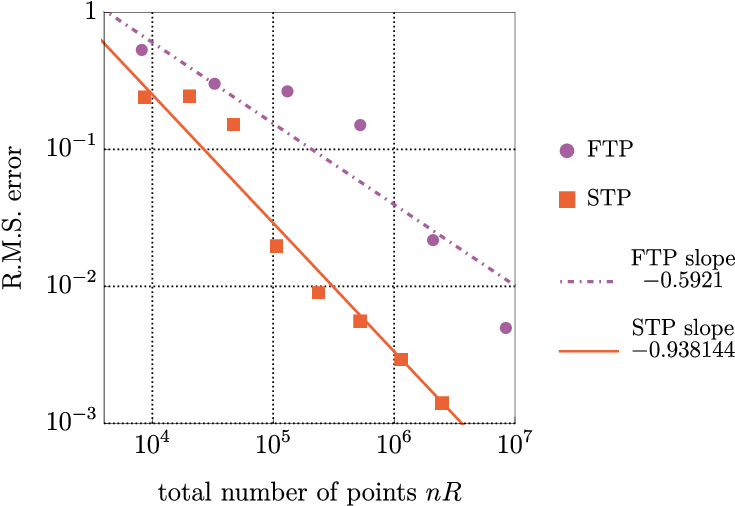}};
\node[below=of img,node distance=0cm,xshift=-.55cm,yshift=1.2cm]{\!\!\!\!\!total number of points $NR$};
\node[left=of img,node distance=0cm,rotate=90,anchor=center,xshift=.25cm,yshift=-1.0cm]{R.M.S. error};
\end{tikzpicture}}
\includegraphics[height=.428\textwidth,trim=0cm 0cm 0cm -0.85cm,clip]{periodic_design.eps}
\caption{Left: The root-mean-square (R.M.S.) cubature errors of the FTP method (a) and STP method (b) applied to the PDE problem~\eqref{eq:numexpde} subject to a periodic representation of the input random field (ii) corresponding to the optimal design. Right: The optimal design obtained using both approaches.}\label{fig:2}
\end{figure}

\begin{remark}\rm
The conditions of Assumptions~\ref{assumptionone}--\ref{assumptiontwo} are satisfied due to the truncation of the data domain~\ref{eq:A21}, the regularity analysis presented in Lemmata~\ref{lemma:innerreg} and~\ref{reg:outer}~\ref{eq:A22} and the mixed regularity analysis presented in Section~\ref{sec:mixed}~\ref{eq:A23}.%
\end{remark}

In all experiments, we compute the value of the EIG for each $\bsxi\in \Xi$ and as the optimal design, we choose the design $\bsxi^*\in\Xi$ minimizing the value of the objective function corresponding to the largest number of cubature points for each experiment (a)--(c). 

As the generating vector for both integrals in cases (a) and (b), as well as the inner integral in part (c), we used the off-the-shelf lattice rule~\cite[\tt lattice-32001-1024-1048576.3600]{kuogeneratingvector}. For each cubature node, the PDE was solved using a first-order finite element method with mesh width $h=2^{-5}$. The root-mean-square error was approximated with respect to $R=16$ random shifts for experiments (a) and (b), and the results these experiments subject to input random field (i) are given in Figure~\ref{fig:1}, while the corresponding results for the input random field (ii) are given in Figure~\ref{fig:2}.

The convergence rate subject to the full tensor product cubature scheme is close to $\mathcal O(N^{-1/2})$ while the convergence rates for the sparse tensor product cubature scheme are nearly $\mathcal O(N^{-1})$. The results computed using the periodic parameterization appear to have a slightly improved rate of decay compared to the affine and uniform parameterization.

The results for experiment (c) are given in Figure~\ref{fig:3}. We approximated the inner integral using a lattice rule with a single random shift and, instead of estimating the root-mean-square error, we obtained the absolute errors of the FTP and STP methods by computing the difference against reference solutions corresponding to $17\,989\,120$ nodes (FTP) and $468\,732$ nodes (STP).

Since both the inner and outer integral are now approximated by higher-order cubatures, the convergence rate subject to the full tensor product cubature scheme is close to $\mathcal O(N^{-1})$ while the sparse tensor product construction achieves a convergence order of roughly $\mathcal O(N^{-2})$. We note that the preasymptotic regimes are relatively long, so the linear fits were constructed using the last three data points for the FTP method and the last five data points for the STP method.

\begin{figure}[!t]
\subfloat{
\begin{tikzpicture}
\node (img) {\includegraphics[height=.36\textwidth,trim=.5cm .5cm 0cm 0cm,clip]{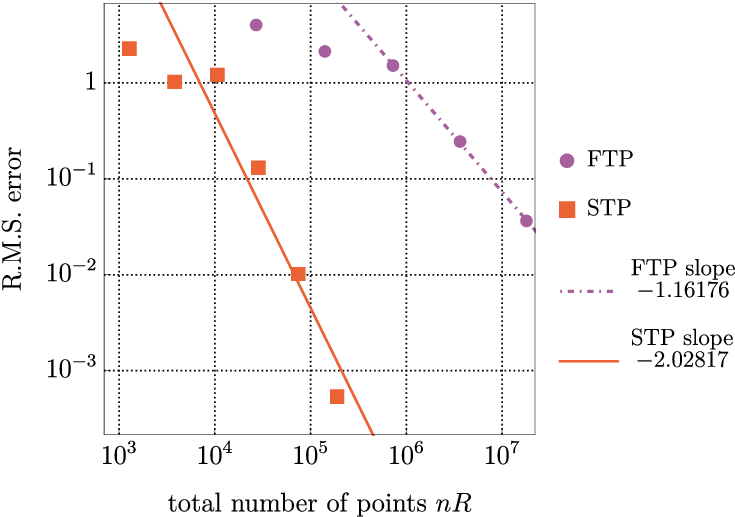}};
\node[below=of img,node distance=0cm,xshift=-.55cm,yshift=1.2cm]{\,total number of points $N$};
\node[left=of img,node distance=0cm,rotate=90,anchor=center,xshift=.25cm,yshift=-1.0cm]{absolute error};
\end{tikzpicture}}
\includegraphics[height=.43\textwidth,trim=0cm 0cm 0cm -0.85cm,clip]{periodic_design.eps}
\caption{Left: The absolute errors of the FTP and STP methods using approach (c) applied to the PDE problem~\eqref{eq:numexpde} subject to a periodic representation of the input random field (ii) corresponding to the optimal design. Right: The optimal design obtained using both approaches.}\label{fig:3}
\end{figure}

\begin{remark}\rm
Alternatively, one could use any higher-order cubature method such as interlaced polynomial lattice rules (cf., e.g.,~\cite{spodpaper14}) to approximate the inner or outer integrals. The regularity analysis developed in Section~\ref{sec:reg} can be adapted to construct tailored lattice rules for this class of quasi-Monte Carlo methods as well.\end{remark}

\section{Conclusions}\label{sec:conclusions}
In summary, this paper represents a significant advancement in the field of BOED for problems governed by PDEs. By establishing parametric regularity, we have delved deeper into the nuances of the design problem, enriching our comprehension of its underlying dynamics. Moreover, our thorough error analysis of the full tensor QMC method has showcased its robustness and efficacy, with convergence rates remaining independent of parameter dimensions.

The introduction of the sparse tensor method has unveiled considerable potential, providing a promising avenue for enhancing convergence rates in nested integrals and recovering original rates. Through numerical verification of predicted convergence rates for a specific elliptic problem, we have furnished empirical validation to support our theoretical findings, affirming the practical feasibility of our proposed methodologies.

The analysis of the sparse tensor approach for nonlinear functions within the inner integral is particularly intriguing, offering avenues for exploration in other domains such as machine learning and statistics, where nested expectations are prevalent, such as variational autoencoders or probabilistic programming systems. Future research endeavors will focus on extending our findings to encompass a broader spectrum of forward problems, not limited to the elliptic model problem.

While our analysis has demonstrated the independence of convergence behavior on parameter dimensions under suitable assumptions, the dependence on data dimensions and noise covariance could be pivotal, especially in scenarios involving informative or sequential data collection processes. In forthcoming studies, we aim to explore techniques grounded in preconditioners to mitigate this effect, building upon prior works in the field \cite{DBLP:journals/nm/SchillingsSW20}.
\section*{Acknowledgement}
CS acknowledges support from MATH+ project EF1-19: Machine Learning Enhanced Filtering Methods for Inverse Problems and EF1-20: Uncertainty Quantification and Design of Experiment for Data-Driven Control, funded by the Deutsche Forschungsgemeinschaft (DFG, German Research
Foundation) under Germany's Excellence Strategy -- The Berlin Mathematics
Research Center MATH+ (EXC-2046/1, project ID: 390685689). \rev{The work of VK was supported by the Research Council of Finland (Flagship of Advanced Mathematics for Sensing, Imaging and Modelling grant 359183).}
\appendix
\section{Technical results}\label{sec:hypergeometric}
\normalsize
The two summation identities appearing in the regularity analysis of Subsection~\ref{sec:inner} can be established using hypergeometric summation.

\begin{lemma}\label{lemma:celine} Let $v\geq 1$. Then
$$
\sum_{\ell=1}^v \frac{1}{(v-\ell)!(\ell-1)!}=\frac{2^{v-1}}{(v-1)!}.
$$
\end{lemma}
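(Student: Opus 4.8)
The plan is to reduce the identity to the binomial theorem by a single reindexing, which makes the factorials collapse into binomial coefficients. First I would multiply both sides by $(v-1)!$, so that the claim becomes
\begin{align*}
\sum_{\ell=1}^v \frac{(v-1)!}{(v-\ell)!(\ell-1)!}=2^{v-1}.
\end{align*}
The key observation is that, since $v-\ell=(v-1)-(\ell-1)$, each summand is exactly a binomial coefficient:
\begin{align*}
\frac{(v-1)!}{(v-\ell)!(\ell-1)!}=\binom{v-1}{\ell-1},\quad \ell\in\{1,\ldots,v\}.
\end{align*}

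Next I would substitute $k:=\ell-1$, so that $k$ ranges over $\{0,1,\ldots,v-1\}$, and rewrite the sum as $\sum_{k=0}^{v-1}\binom{v-1}{k}$. This is the $(v-1)$-st row sum of Pascal's triangle, which equals $2^{v-1}$ by the binomial theorem (taking $x=y=1$ in $(x+y)^{v-1}=\sum_{k=0}^{v-1}\binom{v-1}{k}x^k y^{v-1-k}$). Dividing back by $(v-1)!$ recovers the stated right-hand side $\frac{2^{v-1}}{(v-1)!}$, completing the argument.

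I expect there to be essentially no genuine obstacle here: the only point requiring care is recognizing the factorial factorization as a binomial coefficient, which hinges on the elementary identity $v-\ell=(v-1)-(\ell-1)$ and on the index shift aligning the summation range with a full row of Pascal's triangle. If one prefers to stay within the hypergeometric framework alluded to in the surrounding text (as for the companion identity handled via Lemma~\ref{lemma:gosper}), an alternative would be to write the summand as a hypergeometric term in $\ell$ and verify the closed form by exhibiting the appropriate telescoping certificate; however, the direct binomial argument above is both shorter and fully elementary, so I would present that as the proof.
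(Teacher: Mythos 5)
Your argument is correct. Multiplying by $(v-1)!$ turns the summand into $\binom{v-1}{\ell-1}$ via $v-\ell=(v-1)-(\ell-1)$, and the shifted sum is the full row sum $\sum_{k=0}^{v-1}\binom{v-1}{k}=2^{v-1}$; dividing back gives the claim. There is no gap.

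The paper takes a genuinely different route: it applies Sister Celine's method, deriving a contiguous relation $F(v,\ell)=vF(v+1,\ell+1)-F(v,\ell+1)$ for the summand $F(v,\ell)=\frac{1}{(v-\ell)!(\ell-1)!}$, summing it over $\ell$ to obtain the first-order recurrence $T(v+1)=\frac{2}{v}T(v)$ with $T(1)=1$, and solving that recurrence. This keeps the lemma stylistically aligned with its companion, Lemma~\ref{lemma:gosper}, which is handled by Gosper's algorithm, and illustrates the general machinery of hypergeometric summation; the cost is some bookkeeping about boundary terms and the convention $F(v,v+1)=0$. Your binomial-theorem argument is shorter, entirely elementary, and avoids any recurrence or telescoping certificate, at the price of not generalizing to identities (like the one in Lemma~\ref{lemma:gosper}) where the summand does not collapse to a single row of Pascal's triangle. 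Either proof is acceptable here.
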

\proof  We prove this using Sister Celine's method~\cite{aeqb}. We define $T(v):=\sum_{\ell=1}^v \frac{1}{(v-\ell)!(\ell-1)!}$ and $F(v,\ell):=\frac{1}{(v-\ell)!(\ell-1)!}$. Letting $a,b,c\in\mathbb R$ be undetermined coefficients, we first seek a non-trivial solution to
\begin{align}
aF(v,\ell)+bF(v,\ell+1)+cF(v+1,\ell+1)=0.\label{eq:celine1}
\end{align}
Plugging in the values of $F$ into the above formula and regrouping the equation as a polynomial in terms of $\ell$ yields
$$
(a-b)\ell+bv+c=0.
$$
This yields $b=a$, and $c=-av$ for $a\in\mathbb R$. The relation~\eqref{eq:celine1} thus simplifies to
$$
F(v,\ell)=vF(v+1,\ell+1)-F(v,\ell+1).
$$
By taking the sum over $\ell\in\{1,\ldots,v\}$, we obtain
$$
T(v)=vT(v+1)-T(v)-vF(v+1,1)+F(v,1)-F(v,v+1).
$$
Noting that $F(v+1,1)=\frac{1}{v!}$, $F(v,1)=\frac{1}{(v-1)!}$, and $F(v,v+1)=0$ (by convention\footnote[1]{It is not difficult to check that this convention satisfies the contiguous relation $F(v,\ell)=vF(v+1,\ell+1)-F(v,\ell+1)$ with $\ell=v$.}), we obtain the recurrence
$$
T(v+1)=\frac{2}{v}T(v),\quad T(1)=1.
$$
The claim is an immediate consequence of this recurrence relation.\qed\endproof

\begin{lemma}\label{lemma:gosper} Let $v\geq 1$ and $2\leq \lambda\leq v+1$. Then
\begin{align}
\sum_{\ell=\lambda-1}^{v}(v-\ell+1)\frac{(\ell-1)!}{(\ell-\lambda+1)!}=\frac{(v+1)!}{(v-\lambda+1)!\lambda(\lambda-1)}.\label{eq:gosper0}
\end{align}
\end{lemma}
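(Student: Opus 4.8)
The plan is to treat the summand $a(\ell):=(v-\ell+1)\frac{(\ell-1)!}{(\ell-\lambda+1)!}$ as a hypergeometric term in $\ell$ and to produce a closed-form antidifference, exactly in the spirit of the hypergeometric-summation method used for Lemma~\ref{lemma:celine}. Since the term ratio $a(\ell+1)/a(\ell)=\frac{(v-\ell)\,\ell}{(v-\ell+1)(\ell-\lambda+2)}$ is rational in $\ell$, Gosper's algorithm applies, and I would look for $G(\ell)=c(\ell)\frac{(\ell-1)!}{(\ell-\lambda+1)!}$ with a polynomial $c$ such that $a(\ell)=G(\ell+1)-G(\ell)$. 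Once such a $G$ is found, the left-hand side of~\eqref{eq:gosper0} telescopes to $G(v+1)-G(\lambda-1)$, and it remains only to simplify the two boundary terms into the stated right-hand side.

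First I would reduce the antidifference equation to a polynomial identity: clearing the common factor $\frac{(\ell-1)!}{(\ell-\lambda+2)!}$ turns $a(\ell)=G(\ell+1)-G(\ell)$ into
$$
\ell\,c(\ell+1)-(\ell-\lambda+2)\,c(\ell)=(v-\ell+1)(\ell-\lambda+2).
$$
Comparing degrees shows that $c$ must be quadratic (the cubic terms cancel automatically), so writing $c(\ell)=\alpha\ell^2+\beta\ell+\gamma$ and matching the coefficients of $\ell^2,\ell^1,\ell^0$ yields an essentially triangular system with solution $\alpha=-1/\lambda$, $\gamma=-(v+1)$, and $\beta$ determined from the remaining equation. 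Substituting back and evaluating $G(v+1)-G(\lambda-1)$ — where the factorial ratio at $\ell=v+1$ is $v!/(v-\lambda+2)!$ — should then collapse to the claimed value $\frac{(v+1)!}{(v-\lambda+1)!\,\lambda(\lambda-1)}$.

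I expect the main difficulty to be bookkeeping rather than conceptual: solving for $c$ and then simplifying the boundary contributions into the compact closed form requires care, and the degenerate case $\lambda=2$ (where $\frac{(\ell-1)!}{(\ell-\lambda+1)!}\equiv 1$ and the coefficient of $\gamma$ vanishes) should be treated separately, though there it reduces to the elementary identity $\sum_{\ell=1}^v(v-\ell+1)=v(v+1)/2$. A computation-light alternative that sidesteps Gosper's algorithm is to substitute $m=\ell-\lambda+1$, rewrite $\frac{(\ell-1)!}{(\ell-\lambda+1)!}=(\lambda-2)!\binom{m+\lambda-2}{\lambda-2}$, split $v-\ell+1$ into a constant and a linear part in $m$, and apply the hockey-stick identity $\sum_{m=0}^{N}\binom{m+\lambda-2}{\lambda-2}=\binom{N+\lambda-1}{\lambda-1}$ together with $m\binom{m+\lambda-2}{\lambda-2}=(\lambda-1)\binom{m+\lambda-2}{\lambda-1}$; the two resulting binomial sums combine directly into the asserted expression. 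Either route closes the proof, and I would present whichever keeps the algebra shortest.
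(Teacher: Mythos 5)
Your primary route — finding a Gosper antidifference $G(\ell)=c(\ell)\frac{(\ell-1)!}{(\ell-\lambda+1)!}$ with quadratic $c$ and telescoping to $G(v+1)-G(\lambda-1)$ — is exactly the paper's proof (the paper writes the antidifference explicitly as $z_\ell=\frac{(a+ab-b\ell)(\ell-1)!}{b(1+b)(\ell-b-1)!}$ for the generalized sum with $a=v+1$, $b=\lambda-1$, which is your $c$ with $\alpha=-1/\lambda$, $\gamma=-(v+1)$), and your coefficient computations check out; note also that $G(\lambda-1)=0$, so only the upper boundary term survives, and the $\lambda=2$ case needs no separate treatment since the vanishing $\ell^0$-equation merely leaves $\gamma$ free. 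Your hockey-stick alternative is a valid, genuinely more elementary variant, but the proposal as written matches the paper's argument.
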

\proof We start by proving a different yet related identity:
\begin{align}
\sum_{\ell=b}^v (a-\ell) \frac{(\ell-1)!}{(\ell-b)!}=\frac{(b-v-1)(bv+b-ab-a)v!}{b(1+b)(v-b+1)!}.\label{eq:gosper1}
\end{align}
Let $t_{\ell}:=(a-\ell)\frac{(\ell-1)!}{(\ell-b)!}$. It turns out that the left-hand side expression in~\eqref{eq:gosper1} is {\em Gosper-summable}: it is straightforward to check that $t_{\ell}=z_{\ell+1}-z_{\ell}$, where
$$
z_{\ell}:=\frac{(a+ab-b\ell)(\ell-1)!}{b(1+b)(\ell-b-1)!}.
$$
In consequence,
\begin{align*}
\sum_{\ell=b}^v (a-\ell) \frac{(\ell-1)!}{(\ell-b)!} = \sum_{\ell=b}^v t_\ell=z_{v+1}-z_b=\frac{(b-v-1)(bv+b-ab-a)v!}{b(1+b)(v-b+1)!},
\end{align*}
as desired.

The equation~\eqref{eq:gosper0} follows by substituting $a=v+1$ and $b=\lambda-1$ into~\eqref{eq:gosper1}.\qed\endproof

\section{{QMC analysis under periodic change of variables}}\label{sec:periodic}

We begin by proving a general result on the parametric regularity bounds for smooth Banach space valued functions under a periodic change of variables.
\begin{theorem}\label{thm:periodicregularity}
Let $X$ be a separable Banach space. Let $(\Gamma_k)_{k\geq 0}$ and $\boldsymbol b=(b_j)_{j\geq 1}$ be sequences of nonnegative numbers and $C_0>0$. Suppose that $u(\boldsymbol \theta)\in X$ is infinitely many times continuously differentiable such that
$$
\|\partial_{\bstheta}^{\bsnu}u(\bstheta)\|_X\leq C_0\Gamma_{|\bsnu|}\bsb^{\bsnu}\quad\text{for all}~\bsnu\in\mathbb N_0^s~\text{and}~\bstheta\in \Theta_s:=[-1/2,1/2]^s.
$$
Then the function defined by
\begin{align}\label{eq:comp}
u_{\rm per}(\bstheta):=u(\sin(2\pi \bstheta)),\quad \bstheta\in \Theta_s,
\end{align}
satisfies the regularity bound
$$
\|\partial_{\bstheta}^{\boldsymbol\nu} u_{\rm per}(\boldsymbol \theta)\|_X\leq (2\pi)^{|\bsnu|}C_0\sum_{\boldsymbol m\leq\boldsymbol\nu}\Gamma_{|\boldsymbol m|}\boldsymbol b^{\boldsymbol m}\prod_{j\geq 1}S(\nu_j,m_j)
$$
for all $\boldsymbol\nu\in\mathbb N_0^s$ and $\boldsymbol \theta\in \Theta_s$, where $S(\cdot,\cdot)$ denotes the \emph{Stirling number of the second kind}.
\end{theorem}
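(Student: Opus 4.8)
The plan is to exploit the fact that the inner map in~\eqref{eq:comp} acts \emph{diagonally}: writing $\phi(t):=\sin(2\pi t)$, each component $\phi(\theta_j)$ depends only on $\theta_j$. This decouples the chain rule so that the multivariate Faà di Bruno formula~\cite{savits} reduces to applying the univariate version in each coordinate separately. Iterating the one-dimensional Faà di Bruno formula coordinate by coordinate, I would obtain the expansion
\begin{align*}
\partial_{\bstheta}^{\bsnu} u_{\rm per}(\bstheta) = \sum_{\bsm\le\bsnu}(\partial_{\bsx}^{\bsm}u)(\sin(2\pi\bstheta))\prod_{j=1}^s B_{\nu_j,m_j}\big(\phi'(\theta_j),\phi''(\theta_j),\ldots\big),
\end{align*}
where $B_{n,k}$ denotes the incomplete (exponential) Bell polynomial in the derivatives of $\phi$. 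The factor for an index $j$ with $\nu_j=0$ is $B_{0,0}=1$, and for $\nu_j\geq 1$ with $m_j=0$ it is $B_{\nu_j,0}=0$, so effectively only indices in $\supp(\bsnu)$ contribute.

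Next I would record the elementary bound $|\phi^{(i)}(t)|=(2\pi)^i|\sin(2\pi t+i\pi/2)|\leq (2\pi)^i$ for every $i\geq 1$. The crucial step is then to control the Bell polynomials. Because the incomplete Bell polynomials have \emph{nonnegative} coefficients and satisfy the graded homogeneity $B_{n,k}(c\,b_1,c^2 b_2,c^3 b_3,\ldots)=c^{\,n}B_{n,k}(b_1,b_2,\ldots)$, substituting the bound $|\phi^{(i)}|\leq (2\pi)^i$ termwise yields
\begin{align*}
\big|B_{\nu_j,m_j}\big(\phi'(\theta_j),\ldots\big)\big|\leq B_{\nu_j,m_j}\big((2\pi),(2\pi)^2,\ldots\big)=(2\pi)^{\nu_j}B_{\nu_j,m_j}(1,1,\ldots)=(2\pi)^{\nu_j}S(\nu_j,m_j),
\end{align*}
where the last equality is the classical identity $B_{n,k}(1,1,\ldots)=S(n,k)$ relating Bell polynomials to Stirling numbers of the second kind.

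Putting these ingredients together, I would take the $X$-norm of the expansion, apply the triangle inequality, invoke the hypothesis $\|\partial_{\bsx}^{\bsm}u(\bsx)\|_X\leq C_0\Gamma_{|\bsm|}\bsb^{\bsm}$ evaluated at $\bsx=\sin(2\pi\bstheta)$, and use the Bell polynomial bound to arrive at
\begin{align*}
\|\partial_{\bstheta}^{\bsnu}u_{\rm per}(\bstheta)\|_X\leq \sum_{\bsm\le\bsnu}C_0\Gamma_{|\bsm|}\bsb^{\bsm}\prod_{j=1}^s(2\pi)^{\nu_j}S(\nu_j,m_j).
\end{align*}
Since $\prod_{j=1}^s(2\pi)^{\nu_j}=(2\pi)^{|\bsnu|}$ factors out of the sum, this is exactly the claimed bound.

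The main obstacle I anticipate is twofold. First, making the coordinatewise reduction of Faà di Bruno rigorous: one must verify that after differentiating in $\theta_1$, the subsequent differentiations in $\theta_2,\ldots,\theta_s$ act only on the composed factor $(\partial_{\bsx}^{\bsm}u)(\sin(2\pi\bstheta))$ and leave the one-dimensional Bell factors $B_{\nu_1,m_1}(\phi'(\theta_1),\ldots)$ untouched, which is precisely what the diagonal structure of the inner map guarantees. Second, there is a domain subtlety that must be stated carefully: since $\sin(2\pi\bstheta)$ ranges over $[-1,1]^s$ rather than $\Theta_s=[-\tfrac12,\tfrac12]^s$, the regularity hypothesis on $u$ has to be understood as holding on (a neighborhood of) the full range of the sine map; I would therefore assume $u$ is defined with the asserted derivative bounds at all points $\sin(2\pi\bstheta)$, so that the bound may legitimately be applied at $\bsx=\sin(2\pi\bstheta)$ in the final step.
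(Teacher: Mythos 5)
Your proposal is correct and follows essentially the same route as the paper: multivariate Fa\`a di Bruno for the diagonal inner map, the bound $|\phi^{(i)}(t)|\le(2\pi)^i$, and identification of the resulting combinatorial coefficient with $(2\pi)^{|\bsnu|}\prod_j S(\nu_j,m_j)$ --- the only difference being that you obtain the Stirling-number factor from the standard Bell-polynomial identities (graded homogeneity and $B_{n,k}(1,\dots,1)=S(n,k)$), whereas the paper derives the same closed form by setting up the Fa\`a di Bruno recursion for the coefficients $\kappa_{\bsnu,\bslambda}$ and proving it by induction via the Stirling recurrence. Your remark on the range of $\sin(2\pi\bstheta)$ being $[-1,1]^s$ rather than $\Theta_s$ is a legitimate observation about the theorem's hypotheses that the paper's proof passes over silently; the paper instead adds a separate remark justifying the use of Fa\`a di Bruno for Banach-space-valued functions via duality, which you could mention but which is not a genuine gap in your argument.
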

We begin by outlining the proof strategy. The composition~\eqref{eq:comp} suggests using Fa\`a di Bruno's formula~\cite{savits}: for $\boldsymbol\nu\in\mathbb N_0^s\setminus\{\mathbf 0\}$, there holds
\begin{align}
\partial_{\bstheta}^{\boldsymbol\nu}u_{\rm per}(\boldsymbol \theta)=\sum_{\substack{\boldsymbol\lambda\in\mathbb N_0^s\\ 1\leq|\boldsymbol\lambda|\leq |\boldsymbol\nu|}}\partial_{\boldsymbol{\theta'}}^{\boldsymbol\lambda}u(\boldsymbol{\theta'})\bigg|_{\boldsymbol{\theta'}=\sin(2\pi \boldsymbol\theta)}\kappa_{\boldsymbol\nu,\boldsymbol\lambda}(\boldsymbol \theta),\label{eq:faadibruno2}
\end{align}
where the sequence $(\kappa_{\boldsymbol\nu,\boldsymbol\lambda}(\boldsymbol \theta))$ is defined recursively by
\begin{align*}
&\kappa_{\boldsymbol\nu,\mathbf 0}\equiv \delta_{\boldsymbol\nu,\mathbf 0},\\
&\kappa_{\boldsymbol\nu,\boldsymbol\lambda}\equiv 0\quad\text{if}~|\boldsymbol\nu|<|\boldsymbol\lambda|~\text{or}~\boldsymbol\lambda\not\geq\mathbf 0,\\
&\kappa_{\boldsymbol\nu+\boldsymbol e_j,\boldsymbol\lambda}(\boldsymbol \theta)=\sum_{\ell\in{\rm supp}(\boldsymbol\lambda)}\sum_{\boldsymbol m\leq\boldsymbol\nu}\binom{\boldsymbol\nu}{\boldsymbol m}\partial_{\bstheta}^{\boldsymbol m+\boldsymbol e_j}\sin(2\pi \theta_\ell)\kappa_{\boldsymbol\nu-\boldsymbol m,\boldsymbol\lambda-\boldsymbol e_{\ell}}(\boldsymbol \theta)\\
&=\sum_{m_j=0}^{\nu_j}\binom{\nu_j}{m_j}\partial_{\theta_j}^{m_j+1}\sin(2\pi \theta_j)\kappa_{\boldsymbol\nu-m_j\boldsymbol e_j,\boldsymbol\lambda-\boldsymbol e_j}(\boldsymbol \theta)\quad\text{otherwise}.
\end{align*}
Making use of the fact that
$$
\bigg|\frac{{\rm d}^k}{{\rm d}\theta^k}\sin(2\pi \theta)\bigg|=\bigg|(2\pi)^k\sin\bigg(2\pi \theta+k\,\frac{\pi}{2}\bigg)\bigg|\leq (2\pi)^k,
$$
we can find an upper bound for the sequence $(\kappa_{\boldsymbol\nu,\boldsymbol\lambda}(\boldsymbol \theta))$ defined by an auxiliary sequence $(\rev{\chi}_{\boldsymbol\nu,\boldsymbol\lambda})$ given by the recursion
\begin{align*}
&\rev{\chi}_{\boldsymbol\nu,\mathbf 0}=\delta_{\boldsymbol\nu,\mathbf 0},\\
&\rev{\chi}_{\boldsymbol\nu,\boldsymbol\lambda}=0\quad\text{if}~|\boldsymbol\nu|<|\boldsymbol\lambda|~\text{or}~\boldsymbol\lambda\not\geq\mathbf 0,\\
&\rev{\chi}_{\boldsymbol\nu+\boldsymbol e_j,\boldsymbol\lambda}=\sum_{m_j=0}^{\nu_j}\binom{\nu_j}{m_j}(2\pi)^{m_j+1}\rev{\chi}_{\boldsymbol\nu-m_j\boldsymbol e_j,\boldsymbol\lambda-\boldsymbol e_j}\quad\text{otherwise.}
\end{align*}
This sequence has the following closed form solution.
\begin{lemma}\label{lemma:stirling} There holds
$$
\rev{\chi}_{\boldsymbol\nu,\boldsymbol\lambda}=(2\pi)^{|\boldsymbol\nu|}\prod_{j\geq 1}S(\nu_j,\lambda_j)\quad\text{for all}~\boldsymbol\nu,\boldsymbol\lambda\in\mathbb N_0^s.
$$
\end{lemma}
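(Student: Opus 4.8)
The plan is to establish the closed form by induction on the order $|\boldsymbol\nu|$, using the structural fact that the recursion for $\chi_{\boldsymbol\nu,\boldsymbol\lambda}$ couples only the $j$-th components of $\boldsymbol\nu$ and $\boldsymbol\lambda$ when the coordinate $j$ is incremented, so that the formula factorizes cleanly over coordinates. The single nontrivial ingredient I will need is the binomial recurrence for Stirling numbers of the second kind,
$$
S(\nu+1,\lambda)=\sum_{m=0}^{\nu}\binom{\nu}{m}S(\nu-m,\lambda-1),
$$
which follows by classifying partitions of $\{1,\ldots,\nu+1\}$ into $\lambda$ blocks according to the size of the block containing the element $\nu+1$. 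I would record this identity first.

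Next I would check that the proposed formula $\chi_{\boldsymbol\nu,\boldsymbol\lambda}=(2\pi)^{|\boldsymbol\nu|}\prod_{j\geq1}S(\nu_j,\lambda_j)$ reproduces the boundary data of the recursion. Using $S(0,0)=1$, $S(n,0)=0$ for $n\geq1$, and $S(n,k)=0$ for $n<k$, the product $\prod_j S(\nu_j,0)$ equals $\delta_{\boldsymbol\nu,\mathbf 0}$, matching $\chi_{\boldsymbol\nu,\mathbf 0}=\delta_{\boldsymbol\nu,\mathbf 0}$, and if $|\boldsymbol\nu|<|\boldsymbol\lambda|$ then some coordinate satisfies $\nu_j<\lambda_j$, forcing $S(\nu_j,\lambda_j)=0$ and hence $\chi_{\boldsymbol\nu,\boldsymbol\lambda}=0$; this also covers the base case $|\boldsymbol\nu|=0$. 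For the inductive step, fix $\boldsymbol\nu'$ with $|\boldsymbol\nu'|\geq1$, write $\boldsymbol\nu'=\boldsymbol\nu+\boldsymbol e_j$ for a coordinate $j$ with $\nu'_j\geq1$, and apply the recursion together with the induction hypothesis (each term $\chi_{\boldsymbol\nu-m\boldsymbol e_j,\boldsymbol\lambda-\boldsymbol e_j}$ has order at most $|\boldsymbol\nu|$):
$$
\chi_{\boldsymbol\nu+\boldsymbol e_j,\boldsymbol\lambda}=(2\pi)^{|\boldsymbol\nu|+1}\Big(\prod_{i\neq j}S(\nu_i,\lambda_i)\Big)\sum_{m=0}^{\nu_j}\binom{\nu_j}{m}S(\nu_j-m,\lambda_j-1).
$$
The power of $2\pi$ assembles as $(2\pi)^{m+1}\cdot(2\pi)^{|\boldsymbol\nu|-m}=(2\pi)^{|\boldsymbol\nu|+1}=(2\pi)^{|\boldsymbol\nu+\boldsymbol e_j|}$, and the inner sum equals $S(\nu_j+1,\lambda_j)$ by the Stirling recurrence, which yields exactly $(2\pi)^{|\boldsymbol\nu+\boldsymbol e_j|}\prod_{i\geq1}S((\boldsymbol\nu+\boldsymbol e_j)_i,\lambda_i)$ and closes the induction.

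There is no deep obstacle here: the argument is essentially bookkeeping once the correct tools are in place. The only two points requiring care are recognizing that the relevant Stirling identity is the \emph{binomial} recurrence (rather than the more familiar $S(n+1,k)=kS(n,k)+S(n,k-1)$), since it is the binomial form that matches the $\binom{\nu_j}{m}$ weights produced by the Leibniz rule in the Faà di Bruno derivation, and verifying that the powers of $2\pi$ telescope correctly so that the overall factor is precisely $(2\pi)^{|\boldsymbol\nu|}$. A minor subtlety worth noting is that the recursion is stated for an arbitrary incremented coordinate $j$; for the proof of the closed form it suffices to peel off any single admissible $j$, and the consistency of the sequence across different choices is already guaranteed by its origin as the Faà di Bruno coefficient sequence in \eqref{eq:faadibruno2}.
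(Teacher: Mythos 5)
Your proof is correct and follows essentially the same route as the paper: induction on $|\boldsymbol\nu|$, verification of the boundary data, and the key step of recognizing the inner sum as the binomial recurrence $S(\nu_j+1,\lambda_j)=\sum_{m=0}^{\nu_j}\binom{\nu_j}{m}S(\nu_j-m,\lambda_j-1)$ (which the paper cites as \cite[formula 26.8.23]{nist} rather than deriving combinatorially). The only cosmetic difference is that you supply a combinatorial justification for that identity, whereas the paper simply references it.
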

\proof Let $\boldsymbol\lambda\in\mathbb N_0^s$ be arbitrary. The proof is carried out by induction with respect to the modulus of $\boldsymbol\nu\in\mathbb N_0^s$. The base step is resolved by observing that
$$
\rev{\chi}_{\mathbf 0,\mathbf 0}=1=\prod_{j\geq 1}S(0,0),%
$$
and, if $\boldsymbol\lambda\neq\mathbf 0$,
$$
\rev{\chi}_{\mathbf 0,\boldsymbol\lambda}=0=\prod_{j\geq 1}S(0,\lambda_j),
$$
where the second inequality holds due to ${\rm supp}(\boldsymbol\lambda)\neq\varnothing$.

To resolve the induction step, let $\boldsymbol\nu\in\mathbb N_0^s$ and suppose that the claim has already been proved for all multi-indices with modulus less than or equal to $|\boldsymbol\nu|$. Let $j\geq 1$ be arbitrary. Then
\begin{align*}
\rev{\chi}_{\boldsymbol\nu+\boldsymbol e_j,\boldsymbol\lambda}&=\sum_{m_j=0}^{\nu_j}\binom{\nu_j}{m_j}(2\pi)^{m_j+1}(2\pi)^{|\boldsymbol\nu|-m_j}S(\nu_j-m_j,\lambda_j-1)\prod_{i\neq j}S(\nu_i,\lambda_i)\\
&=(2\pi)^{|\boldsymbol\nu|+1}\prod_{i\neq j}S(\nu_i,\lambda_i)\sum_{m_j=0}^{\nu_j}\binom{\nu_j}{m_j}S(\nu_j-m_j,\lambda_j-1)\\
&=(2\pi)^{|\boldsymbol\nu|+1}\bigg(\prod_{i\neq j}S(\nu_i,\lambda_i)\bigg)S(\nu_j+1,\lambda_j),
\end{align*}
where the final equality is an immediate consequence of~\cite[formula 26.8.23]{nist}.\qed\endproof

\proof[Proof of Theorem~\ref{thm:periodicregularity}] Since $\kappa_{\boldsymbol\nu,\boldsymbol\lambda}\leq \rev{\chi}_{\boldsymbol\nu,\boldsymbol\lambda}$ holds by construction, we may plug into~\eqref{eq:faadibruno2} the identity proved in Lemma~\ref{lemma:stirling} and use the bound~(A2) to obtain
\begin{align*}
\|\partial_{\bstheta}^{\boldsymbol\nu} u_{\rm per}(\boldsymbol \theta)\|_X&\leq (2\pi)^{|\boldsymbol\nu|}C_0\sum_{\substack{\boldsymbol\lambda\in\mathbb N_0^s\\ 1\leq|\boldsymbol\lambda|\leq |\boldsymbol\nu|}}\Gamma_{|\boldsymbol\lambda|}\boldsymbol b^{\boldsymbol\lambda}\prod_{j\geq 1}S(\nu_j,\lambda_j)\\
&=(2\pi)^{|\boldsymbol\nu|}C_0\sum_{\boldsymbol\lambda\leq\boldsymbol\nu}\Gamma_{|\boldsymbol\lambda|}\boldsymbol b^{\boldsymbol\lambda}\prod_{j\geq 1}S(\nu_j,\lambda_j)
\end{align*}
since $\prod_{j\geq 1}S(\nu_j,\lambda_j)=0$ whenever $\boldsymbol\lambda\not\leq\boldsymbol\nu$.\qed\endproof

\begin{remark}\rm
The Fa\`a di Bruno formula in was developed for scalar-valued functions in~\cite{savits}, but we applied it above for Banach space valued functions. This is not an issue as can be seen by the following simple argument: for arbitrary $G\in X'$, there holds
\begin{align*}
\langle G,\partial_{\bstheta}^{\bsnu} u_{\rm per}(\bstheta)\rangle_{X',X}&=\partial_{\bstheta}^{\bsnu}\langle G,u_{\rm per}(\bstheta)\rangle_{X',X}\\
&=\partial_{\bstheta}^{\bsnu}(\langle G,u(\cdot)\rangle_{X',X}\circ \sin(2\pi \bstheta))\\
&=\sum_{\substack{\boldsymbol\lambda\in\mathbb N_0^s\\ 1\leq|\boldsymbol\lambda|\leq |\bsnu|}}\partial_{\boldsymbol{\theta'}}^{\boldsymbol\lambda}\langle G,u(\boldsymbol{\theta'})\rangle_{X',X}\bigg|_{\boldsymbol{\boldsymbol\theta'}=\sin(2\pi \bstheta)}\kappa_{\bsnu,\boldsymbol\lambda}(\bstheta)\\
&=\bigg\langle G,\sum_{\substack{\boldsymbol\lambda\in\mathbb N_0^s\\ 1\leq|\boldsymbol\lambda|\leq |\bsnu|}}\partial_{\boldsymbol{\theta'}}^{\boldsymbol\lambda}u(\boldsymbol{\theta'})\bigg|_{\boldsymbol{\theta'}=\sin(2\pi \bstheta)}\kappa_{\bsnu,\boldsymbol\lambda}(\bstheta)\bigg\rangle_{X',X},
\end{align*}
where the scalars $(\kappa_{\bsnu,\boldsymbol\lambda}(\bstheta))$ are defined using {\em exactly} the same recursion as before. Since the above derivation holds for all $G\in X'$, we conclude that Fa\`a di Bruno's formula~\eqref{eq:faadibruno2} is valid for Banach space valued functions.\end{remark}

The significance of the preceding result can be understood as follows: in order to obtain the parametric regularity bound for a given problem under the periodic paradigm, it is in principle sufficient to carry out the parametric regularity analysis under the assumption of an underlying affine and uniform random field and then apply Theorem~\ref{thm:periodicregularity} to obtain the corresponding regularity bound for the periodically transformed problem.

As a corollary, we obtain the following analogues of Lemmata~\ref{lemma:innerreg} and~\ref{lemma:mixedregbound} for the periodic model problem~\eqref{eq:periodicmodelproblem}.
\begin{lemma}Let $\bsnu\in\mathbb N_0^s\setminus\{\mathbf0\}$. Then under assumptions~{\rm \ref{eq:A2}}--{\rm \ref{eq:A3}}, there holds for~\eqref{eq:periodicmodelproblem} that
\begin{align*}
&|\partial_{\boldsymbol\theta}^{\boldsymbol\nu}{\rm e}^{-\frac{1}{2}\|\boldsymbol y-G_{s,{\rm per}}(\boldsymbol\theta,\boldsymbol\xi)\|_{\Gamma^{-1}}^2}|\\
&\leq 3.82^k(2\pi)^{|\bsnu|}\sum_{\boldsymbol m\leq \bsnu} C^{|\boldsymbol{m}|}\rev{2^{\beta(|\boldsymbol{m}|-1)}}\mu_{\min}^{-|\boldsymbol{m}|/2}\rev{(|\boldsymbol{m}|!)^{\beta}}\boldsymbol b^{\boldsymbol{m}}\prod_{j\geq 1}S(\nu_j,m_j)
\end{align*}
for all $\bstheta\in\Theta_s$, $\bsy\in [-K,K]^k$, and $\bsxi\in\Xi$.
\end{lemma}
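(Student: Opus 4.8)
The plan is to obtain this bound as a direct corollary of the general periodic chain-rule estimate in Theorem~\ref{thm:periodicregularity}, applied with $X=\mathbb R$ to the scalar-valued map $u(\bstheta):={\rm e}^{-\frac12\|\bsy-G_s(\bstheta,\bsxi)\|_{\Gamma^{-1}}^2}$, where $\bsy\in[-K,K]^k$ and $\bsxi\in\Xi$ are treated as fixed parameters. Since the periodic forward model is defined by $G_{s,{\rm per}}(\bstheta)=G_s(\sin(2\pi\bstheta))$ in~\eqref{eq:periodicmodelproblem}, the integrand of interest is precisely the composition $u_{\rm per}(\bstheta)=u(\sin(2\pi\bstheta))={\rm e}^{-\frac12\|\bsy-G_{s,{\rm per}}(\bstheta,\bsxi)\|_{\Gamma^{-1}}^2}$ that appears in~\eqref{eq:comp}. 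Thus the entire task reduces to verifying the hypothesis of Theorem~\ref{thm:periodicregularity} for $u$ and then reading off its conclusion.

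First I would check the required uniform derivative bound. For $\bsnu\neq\mathbf 0$, Lemma~\ref{lemma:innerreg} gives exactly $|\partial_{\bstheta}^{\bsnu}u(\bstheta)|\leq 3.82^k\,C^{|\bsnu|}2^{\beta(|\bsnu|-1)}\mu_{\min}^{-|\bsnu|/2}(|\bsnu|!)^{\beta}\boldsymbol b^{\bsnu}$, which is of the form $C_0\,\Gamma_{|\bsnu|}\boldsymbol b^{\bsnu}$ demanded by Theorem~\ref{thm:periodicregularity} upon setting $C_0:=3.82^k$ and $\Gamma_m:=C^m 2^{\beta(m-1)}\mu_{\min}^{-m/2}(m!)^{\beta}$. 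For the zero multi-index one has the trivial bound $|u(\bstheta)|\leq 1\leq 3.82^k=C_0$, so the hypothesis is also met at $\bsnu=\mathbf 0$ by declaring, say, $\Gamma_0:=1$. With the hypothesis in place, Theorem~\ref{thm:periodicregularity} yields $|\partial_{\bstheta}^{\bsnu}u_{\rm per}(\bstheta)|\leq (2\pi)^{|\bsnu|}C_0\sum_{\boldsymbol m\leq\bsnu}\Gamma_{|\boldsymbol m|}\boldsymbol b^{\boldsymbol m}\prod_{j\geq 1}S(\nu_j,m_j)$, which becomes the claimed bound after substituting the chosen $C_0$ and $\Gamma_m$.

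The one bookkeeping subtlety, which I expect to be the only delicate point, concerns the value assigned to $\Gamma_0$. The claimed formula carries the factor $2^{\beta(|\boldsymbol m|-1)}$, so its $\boldsymbol m=\mathbf 0$ summand would nominally read $2^{-\beta}$ rather than the $\Gamma_0=1$ used to satisfy the base case of the hypothesis. This apparent inconsistency is harmless: for $\bsnu\neq\mathbf 0$ there is at least one index $j$ with $\nu_j\geq 1$, whence $S(\nu_j,0)=0$, so the entire $\boldsymbol m=\mathbf 0$ term vanishes and the stated expression is independent of how $\Gamma_0$ is chosen. Consequently there is no genuine analytic obstacle here; the result follows by combining the already-established affine regularity estimate of Lemma~\ref{lemma:innerreg} with the Stirling-number chain rule of Theorem~\ref{thm:periodicregularity}, and the proof amounts to matching constants and noting this cancellation.
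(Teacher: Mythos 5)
Your proposal is correct and follows exactly the route the paper intends: the lemma is stated there as an immediate corollary of Theorem~\ref{thm:periodicregularity} applied to the affine bound of Lemma~\ref{lemma:innerreg}, with $C_0=3.82^k$ and $\Gamma_m=C^m2^{\beta(m-1)}\mu_{\min}^{-m/2}(m!)^{\beta}$. Your observation that the $\boldsymbol m=\mathbf 0$ summand is annihilated by $S(\nu_j,0)=0$ for $\bsnu\neq\mathbf 0$, so the choice of $\Gamma_0$ is immaterial, correctly disposes of the only bookkeeping subtlety.
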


\begin{lemma}Let $\bsnu\in\mathbb N_0^s\setminus\{\mathbf 0\}$ and $\bseta\in\mathbb N_0^k\setminus\{\mathbf 0\}$. Then under assumptions~{\rm \ref{eq:A2}}--{\rm \ref{eq:A3}}, there holds for~\eqref{eq:periodicmodelproblem} that
\begin{align*}
&|\partial_{\bsy}^{\bseta}\partial_{\bstheta}^{\bsnu}{\rm e}^{-\frac12 \|\bsy-G_{s,{\rm per}}(\bstheta,\bsxi)\|_{\Gamma^{-1}}^2}|\\
&\!\leq\! \rev{1.1^k 2^{\beta(k-1)}}k{\rm e}^{2k}\bseta!(2\pi)^{|\bsnu|}\!\sum_{\boldsymbol m\leq\bsnu}\!(\rev{4^\beta}C)^{|\boldsymbol m|}\mu_{\min}^{-|\boldsymbol m|-1/2}R^{|\boldsymbol m|}\rev{(|\boldsymbol m|!)^\beta}\bsb^{\boldsymbol m}\!\prod_{j\geq 1}\!S(\nu_j,m_j)
\end{align*}
for all $\bstheta\in\Theta_s$, $\bsy\in[-K,K]^k$, and $\bsxi\in\Xi$.
\end{lemma}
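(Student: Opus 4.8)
The plan is to obtain this estimate as a direct corollary of the non-periodic mixed regularity bound of Lemma~\ref{lemma:mixedregbound}, transported through the periodic change-of-variables result Theorem~\ref{thm:periodicregularity}. The crucial structural observation is that the periodization acts on the parameter $\bstheta$ only through the forward map, whereas the differential operator $\partial_{\bsy}^{\bseta}$ acts on the explicit data variable $\bsy$; the two operations commute. Concretely, for fixed $\bsy\in[-K,K]^k$, $\bseta\in\mathbb N_0^k\setminus\{\mathbf 0\}$, and $\bsxi\in\Xi$, I would introduce the scalar function
\[
v(\bstheta'):=\partial_{\bsy}^{\bseta}{\rm e}^{-\frac12\|\bsy-G_s(\bstheta',\bsxi)\|_{\Gamma^{-1}}^2},
\]
and observe that, since $G_{s,{\rm per}}(\bstheta,\bsxi)=G_s(\sin(2\pi\bstheta),\bsxi)$ by~\eqref{eq:periodicmodelproblem},
\[
\partial_{\bsy}^{\bseta}{\rm e}^{-\frac12\|\bsy-G_{s,{\rm per}}(\bstheta,\bsxi)\|_{\Gamma^{-1}}^2}=v(\sin(2\pi\bstheta))=v_{\rm per}(\bstheta).
\]
Thus the quantity to be estimated is exactly $\partial_{\bstheta}^{\bsnu}v_{\rm per}(\bstheta)$, a single periodized scalar function of $\bstheta$.

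Next I would recast Lemma~\ref{lemma:mixedregbound} in the precise form required by the hypothesis of Theorem~\ref{thm:periodicregularity}. That lemma asserts, uniformly in $\bsy\in[-K,K]^k$ and $\bsxi\in\Xi$, that $|\partial_{\bstheta'}^{\bsnu}v(\bstheta')|\le C_0\,\Gamma_{|\bsnu|}\,\bsb^{\bsnu}$, where I read off the identifications
\[
C_0:=1.1^k\cdot 2^{\beta(k-1)}k\,{\rm e}^{2k}\bseta!\,\mu_{\min}^{-1/2},\qquad \Gamma_{|\bsnu|}:=(4^{\beta}C)^{|\bsnu|}\mu_{\min}^{-|\bsnu|}R^{|\bsnu|}(|\bsnu|!)^{\beta},
\]
with $\bsb$ the sequence from~\ref{eq:A1}. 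Because this bound holds for every admissible evaluation point, the function $v$ meets the regularity hypothesis of Theorem~\ref{thm:periodicregularity} with these $\bsy$-independent constants.

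Finally, applying Theorem~\ref{thm:periodicregularity} with $X=\mathbb R$ and $u=v$ gives
\[
|\partial_{\bstheta}^{\bsnu}v_{\rm per}(\bstheta)|\le(2\pi)^{|\bsnu|}C_0\sum_{\bsm\le\bsnu}\Gamma_{|\bsm|}\bsb^{\bsm}\prod_{j\ge1}S(\nu_j,m_j),
\]
and substituting the expressions for $C_0$ and $\Gamma_{|\bsm|}$ yields the claim once the isolated factor $\mu_{\min}^{-1/2}$ in $C_0$ is merged with $\mu_{\min}^{-|\bsm|}$ inside the sum to produce $\mu_{\min}^{-|\bsm|-1/2}$; all remaining constants line up verbatim with the asserted bound. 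I expect the single genuine point requiring care — the main obstacle — to be verifying that the hypotheses of Theorem~\ref{thm:periodicregularity} are actually available at the required arguments: the map $\bstheta\mapsto\sin(2\pi\bstheta)$ sends $\Theta_s$ onto $[-1,1]^s$ rather than into $[-1/2,1/2]^s$, so one must ensure that Lemma~\ref{lemma:mixedregbound}, equivalently the Gevrey bound~\ref{eq:A1}, persists on this enlarged cube. For the elliptic model problem this causes no difficulty, since the parametric analyticity region comfortably contains $[-1,1]^s$; more generally it is precisely the regime in which the periodic transform of Theorem~\ref{thm:periodicregularity} is intended to be applied.
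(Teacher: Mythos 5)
Your proposal is correct and follows exactly the route the paper intends: the lemma is stated there as a corollary of Theorem~\ref{thm:periodicregularity} applied to the uniform-in-$\bsy$ bound of Lemma~\ref{lemma:mixedregbound}, with precisely your identifications of $C_0$ and $\Gamma_{|\bsnu|}$ and the merging of $\mu_{\min}^{-1/2}$ into the sum. Your closing observation that $\sin(2\pi\bstheta)$ maps $\Theta_s$ onto $[-1,1]^s$, so the Gevrey bound must persist on the enlarged cube, is a genuine subtlety that the paper passes over silently, and you resolve it correctly.
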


Let $F\in C([0,1)^s)$ be a smooth, 1-periodic function with dominating mixed smoothness of order $\alpha>1$ and consider the cubature rule
$$
Q_{s,n}(F):=\frac{1}{n}\sum_{i=1}^n F(\bst_i)\approx \int_{[0,1]^s}F(\bstheta)\,{\rm d}\bstheta=:I_s(F)
$$
over (unshifted) lattice points~\eqref{eq:latticepoint}. By defining the norm
$$
\|F\|_{\mathcal K_{s,\alpha,\bsgamma}}:=\sup_{\boldsymbol h\in\mathbb Z^s}|\widehat F(\boldsymbol h)|r_{\alpha}(\boldsymbol\gamma,\boldsymbol h),\quad r_{\alpha}(\bsgamma,\boldsymbol h):=\gamma_{{\rm supp}(\boldsymbol h)}^{-1}\prod_{j\in{\rm supp}(\boldsymbol h)}|h_j|^\alpha,
$$
where ${\rm supp}(\boldsymbol h):=\{j\in\{1:s\}:h_j\neq 0\}$ and $\widehat F(\boldsymbol h):=\int_{[0,1]^s}F(\bstheta){\rm e}^{-2\pi{\rm i}\bstheta\cdot \boldsymbol h}\,{\rm d}\bstheta$ for $\boldsymbol h\in\mathbb Z^s$, and $\bsgamma=(\gamma_{\setu})_{\setu\subseteq\{1:s\}}$ denotes a collection of positive weights, we have the following.

\begin{lemma}[cf.~{\cite{korobovpaper}}]
Let $s\in\mathbb N$, $\alpha>1$, and let $\bsgamma=(\gamma_{\setu})_{\setu\subseteq\{1:s\}}$ be a collection of positive weights. Let $F\in C([0,1)^s)$ be a 1-periodic function with respect to each of its variables such that $\|F\|_{\mathcal K_{s,\alpha,\bsgamma}}<\infty$. An $s$-dimensional lattice rule with $n=2^m$ points, $m\geq 0$, can be constructed by a CBC algorithm such that, for all $\lambda\in (1/\alpha,1]$,
$$
|I_{s}(F)-Q_{s,n}(F)|\leq\bigg(\frac{2}{n}\sum_{\varnothing\neq \mathfrak u\subseteq\{1:s\}}\gamma_{\mathfrak u}^\lambda(2\zeta(\alpha\lambda))^{|\setu|}\bigg)^{1/\lambda}\|F\|_{\mathcal K_{s,\alpha,\boldsymbol\gamma}},
$$
where $\zeta(x):=\sum_{\ell=1}^\infty \ell^{-x}$ is the {Riemann zeta function} for $x>1$.
\end{lemma}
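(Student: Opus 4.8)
The plan is to follow the classical Fourier-analytic route for lattice rules on periodic functions, reducing the worst-case error over the Korobov-type ball to a weighted figure of merit and then invoking the standard component-by-component (CBC) existence bound. First I would record the exact error representation: for a $1$-periodic $F$ with absolutely convergent Fourier series, the character orthogonality identity
$$
\frac{1}{n}\sum_{i=1}^n {\rm e}^{2\pi{\rm i}\,\boldsymbol h\cdot i\bsz/n}=\begin{cases}1&\text{if}~\boldsymbol h\cdot\bsz\equiv 0~(\mathrm{mod}~n),\\ 0&\text{otherwise},\end{cases}
$$
gives the dual-lattice formula $Q_{s,n}(F)-I_s(F)=\sum_{\boldsymbol 0\neq\boldsymbol h\in\Lambda^{\perp}}\widehat F(\boldsymbol h)$, where $\Lambda^{\perp}:=\{\boldsymbol h\in\mathbb Z^s:\boldsymbol h\cdot\bsz\equiv 0~(\mathrm{mod}~n)\}$. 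Since $\|F\|_{\mathcal K_{s,\alpha,\bsgamma}}<\infty$ forces $|\widehat F(\boldsymbol h)|\le \|F\|_{\mathcal K_{s,\alpha,\bsgamma}}/r_\alpha(\bsgamma,\boldsymbol h)$, taking absolute values bounds the error by $\|F\|_{\mathcal K_{s,\alpha,\bsgamma}}\,P_{\alpha,\bsgamma}(\bsz)$, where $P_{\alpha,\bsgamma}(\bsz):=\sum_{\boldsymbol 0\neq\boldsymbol h\in\Lambda^{\perp}}r_\alpha(\bsgamma,\boldsymbol h)^{-1}$ is the weighted figure of merit. This decouples the analytic (norm) part from the number-theoretic (lattice) part.

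Next I would introduce the parameter $\lambda\in(1/\alpha,1]$ via the elementary subadditivity inequality $(\sum_k a_k)^\lambda\le\sum_k a_k^\lambda$ valid for $a_k\ge0$ and $0<\lambda\le1$, which, after raising to the power $1/\lambda$, yields
$$
P_{\alpha,\bsgamma}(\bsz)\le\bigg(\sum_{\boldsymbol 0\neq\boldsymbol h\in\Lambda^{\perp}}\frac{\gamma_{\supp(\boldsymbol h)}^{\lambda}}{\prod_{j\in\supp(\boldsymbol h)}|h_j|^{\alpha\lambda}}\bigg)^{1/\lambda}.
$$
Grouping the inner sum according to the support $\setu=\supp(\boldsymbol h)$ factorises the coordinatewise sums, each of which contributes $\sum_{h\neq0}|h|^{-\alpha\lambda}=2\zeta(\alpha\lambda)$; this is precisely where the hypothesis $\lambda>1/\alpha$ (equivalently $\alpha\lambda>1$) enters, since it guarantees convergence of the zeta series.

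The remaining and principal step is the number-theoretic estimate showing that a CBC-constructed generating vector makes the $\lambda$-power figure of merit small. I would follow the standard inductive CBC argument: writing the $\lambda$-figure of merit as a sum over nonempty supports $\setu$ subject to the congruence $\boldsymbol h\cdot\bsz\equiv0~(\mathrm{mod}~n)$, one bounds the average over the next generating-vector component of the partial quantity and shows that the greedy, error-minimising choice does no worse than this average; propagating the bound through dimensions $1,\dots,s$ gives
$$
\sum_{\boldsymbol 0\neq\boldsymbol h\in\Lambda^{\perp}}\frac{\gamma_{\supp(\boldsymbol h)}^{\lambda}}{\prod_{j\in\supp(\boldsymbol h)}|h_j|^{\alpha\lambda}}\le\frac{2}{n}\sum_{\varnothing\neq\setu\subseteq\{1:s\}}\gamma_{\setu}^{\lambda}\,(2\zeta(\alpha\lambda))^{|\setu|},
$$
and substituting back and taking the $1/\lambda$-th power produces the claim. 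I expect this last step to be the main obstacle: the counting of solutions of $\boldsymbol h\cdot\bsz\equiv0~(\mathrm{mod}~2^m)$ is more delicate than in the prime-modulus case, because $\mathbb Z/2^m\mathbb Z$ has zero divisors, and this is exactly what degrades the natural $1/n$ density of the constraint to the factor $2/n$ recorded in the statement. Verifying that the greedy CBC step respects the averaged bound at every coordinate, rather than merely asserting existence of a good $\bsz$ by a global averaging argument, is the technical heart of the proof, though it is by now entirely standard.
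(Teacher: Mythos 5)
The paper does not prove this lemma at all: it is quoted verbatim from the literature (the ``cf.~\cite{korobovpaper}'' attribution), so there is no in-paper argument to compare against. Your outline reconstructs the standard proof correctly: the dual-lattice error representation (justified here because $\|F\|_{\mathcal K_{s,\alpha,\bsgamma}}<\infty$ with $\alpha>1$ gives absolute convergence of the Fourier series), the bound by the weighted figure of merit $P_{\alpha,\bsgamma}(\bsz)$, the Jensen/subadditivity step introducing $\lambda$, the factorisation over supports producing $2\zeta(\alpha\lambda)$ (which is exactly where $\lambda>1/\alpha$ is used), and the CBC averaging induction; your diagnosis that the factor $2/n$ reflects $1/\varphi(2^m)=2/n$ for the power-of-two modulus is also right. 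The only point worth being careful about in a full write-up is that the CBC algorithm minimises one fixed criterion (e.g.\ $P_{\alpha,\bsgamma}$ itself), yet the averaged bound must be shown to hold for \emph{all} $\lambda\in(1/\alpha,1]$ simultaneously for that single generating vector; the standard resolution applies Jensen's inequality inside each inductive step rather than averaging the $\lambda$-power quantity directly, and your sketch should make that ordering explicit.
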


When $\alpha\geq 2$ is an integer, there holds
$$
\|F\|_{\mathcal K_{s,\alpha,\bsgamma}}\leq \max_{\setu\subseteq\{1:s\}}\frac{1}{(2\pi)^{\alpha|\setu|}}\frac{1}{\gamma_{\setu}}\int_{[0,1]^{|\setu|}}\bigg|\int_{[0,1]^{s-|\setu|}}\bigg(\prod_{j\in\setu}\frac{\partial^{\alpha}}{\partial y_j^{\alpha}}\bigg)F(\bsy)\,{\rm d}\bsy_{-\setu}\bigg|\,{\rm d}\bsy_{\setu}
$$
provided that $F$ has mixed partial derivatives of order $\alpha$.

We consider the parametric regularity of the inner integrand appearing in the expression
\begin{align}\label{eq:innerintdefper}
\int_{\Theta_s}f_{\rm per}(\bstheta,\bsy)\,{\rm d}\bstheta,\quad f_{\rm per}(\bstheta,\bsy):=C_{k,\Gamma}{\rm e}^{-\frac12 \|\bsy-G_{s,{\rm per}}(\bstheta,\bsxi)\|_{\Gamma^{-1}}^2}.
\end{align}
In complete analogy to the derivation in~\cite{KKS}, we obtain the following result.
\begin{theorem}
Let $n=2^m$, $m\geq 0$. Then under assumptions {\rm \ref{eq:A2}}--{\rm \ref{eq:A3}}, it is possible to use a CBC algorithm to obtain a generating vector $\boldsymbol z\in\mathbb N^s$ such that the rank-1 lattice rule for the integrand $f_{\rm per}$ of~\eqref{eq:innerintdefper} satisfies the root-mean-square error estimate
$$
|I_s(f_{\rm per})-{Q}_{s,n}(f_{\rm per})|\leq Cn^{-1/p},
$$
where the constant $C>0$ is independent of the dimension $s$, provided that the smoothness-driven product and order dependent (SPOD) weights
$$
\gamma_{\setu}:=\sum_{\boldsymbol m_{\setu}\in\{1:\alpha\}^{|\setu|}}C^{|\boldsymbol m_{\setu}|}\rev{2^{\beta(|\boldsymbol m_{\setu}|-1)}}\mu_{\min}^{-|\boldsymbol m_{\setu}|/2}\rev{(|\boldsymbol m_{\setu}|!)^\beta}\prod_{j\in \setu}\big(b_j^{m_j}S(\alpha,m_j)\big)
$$
are used as inputs to the CBC algorithm with $\alpha=\lfloor 1/p\rfloor+1$\rev{ and $p<\frac1{\beta}$}.
\end{theorem}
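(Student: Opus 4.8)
The plan is to combine the periodic parametric regularity bound for $f_{\rm per}$ (the analogue of Lemma~\ref{lemma:innerreg} stated just above, obtained via Theorem~\ref{thm:periodicregularity}) with the lattice rule error bound cited from~\cite{korobovpaper}, and then to choose the weights $\bsgamma$ so that the Korobov-type norm $\|f_{\rm per}\|_{\mathcal K_{s,\alpha,\bsgamma}}$ is bounded independently of the dimension $s$.

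First I would specialize the periodic derivative bound to the multi-index $\bsnu=\alpha\bsone_{\setu}$, i.e.\ $\nu_j=\alpha$ for $j\in\setu$ and $\nu_j=0$ otherwise, since this is precisely what enters the integer-order bound for $\|\cdot\|_{\mathcal K_{s,\alpha,\bsgamma}}$ when $\alpha=\lfloor 1/p\rfloor+1$. Because $S(0,m_j)=\delta_{m_j,0}$ and $S(\alpha,0)=0$ for $\alpha\ge 1$, the Stirling-number product forces the inner multi-index $\bsm$ to be supported on $\setu$ with each $m_j\in\{1,\ldots,\alpha\}$, so the sum collapses to a sum over $\bsm_{\setu}\in\{1:\alpha\}^{|\setu|}$; moreover the factor $(2\pi)^{\alpha|\setu|}$ produced by the periodic transform cancels exactly against the $(2\pi)^{-\alpha|\setu|}$ appearing in the norm bound. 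Integrating the resulting pointwise estimate over the measure-one cube and taking the maximum over $\setu$ then shows that, with $\gamma_{\setu}$ defined precisely as the displayed SPOD sum, one has $\|f_{\rm per}\|_{\mathcal K_{s,\alpha,\bsgamma}}\le 3.82^k$, uniformly in $s$.

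Next I would insert this bound into the lattice rule error estimate, yielding
$$
|I_s(f_{\rm per})-Q_{s,n}(f_{\rm per})|\le 3.82^k\bigg(\frac{2}{n}\sum_{\varnothing\neq\setu\subseteq\{1:s\}}\gamma_{\setu}^{\lambda}(2\zeta(\alpha\lambda))^{|\setu|}\bigg)^{1/\lambda}
$$
for every $\lambda\in(1/\alpha,1]$. To extract the rate $n^{-1/p}$ I would take $\lambda=p$; this choice is admissible because $\alpha=\lfloor 1/p\rfloor+1>1/p$ gives $1/\alpha<p$, while $p<1/\beta\le 1$ gives $p\le 1$, and in particular $\alpha p>1$ so that $\zeta(\alpha p)<\infty$.

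The main obstacle is to prove that $\sum_{\varnothing\neq\setu}\gamma_{\setu}^{p}(2\zeta(\alpha p))^{|\setu|}$ is bounded uniformly in $s$. Here I would use $p\le 1$ together with the subadditivity $(\sum_i a_i)^p\le\sum_i a_i^p$ to pull the power inside the SPOD sum, reducing the estimate to a double sum over subsets $\setu$ and order assignments $\bsm_{\setu}\in\{1:\alpha\}^{|\setu|}$. The truly delicate term is $(|\bsm_{\setu}|!)^{\beta p}$: following the combinatorial argument of~\cite{KKS}, I would reorganize the double sum according to the total order $r=|\bsm_{\setu}|$ and use the summability $\bsb\in\ell^p(\mathbb N)$ in tandem with the assumption $p\beta<1$ to dominate this factorial growth, producing an $s$-independent convergent series. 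This is exactly the step in which the restriction $p<1/\beta$ is indispensable.
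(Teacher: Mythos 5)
Your proposal is correct and follows exactly the route the paper intends: the paper gives no explicit proof of this theorem, deferring to the derivation in~\cite{KKS}, and your argument correctly supplies the omitted details --- specializing the periodic regularity lemma to $\bsnu=\alpha\bsone_{\setu}$ so that the Stirling factors collapse the sum to $\bsm_{\setu}\in\{1:\alpha\}^{|\setu|}$ and the $(2\pi)^{\alpha|\setu|}$ factors cancel, matching the resulting bound to the SPOD weights to control $\|f_{\rm per}\|_{\mathcal K_{s,\alpha,\bsgamma}}$ uniformly in $s$, and taking $\lambda=p\in(1/\alpha,1]$ with the standard reorganization by total order $r=|\bsm_{\setu}|$ where $\bsb\in\ell^p$ and $p\beta<1$ yield the $s$-independent bound on $\sum_{\setu}\gamma_{\setu}^p(2\zeta(\alpha p))^{|\setu|}$. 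The only cosmetic omission is the factor $C_{k,\Gamma}$ from $f_{\rm per}$ in your norm bound $3.82^k$, which does not affect dimension independence.
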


The convergence rates for the full tensor product and sparse tensor product approximations of the double integral subject to the periodically parameterized forward model, i.e.,
$$
\int_{[-K,K]^k} g\bigg(\int_{\Theta_s} f_{\rm per}(\bstheta,\bsy)\,{\rm d}\bstheta\bigg)\,{\rm d}\bsy,\quad g(y)=x\log x,
$$coincide with those presented in Theorems~\ref{lemma:ftprate} and~\ref{lemma:genericstp} when the outer integral is discretized using a first-order method. However, if the outer integral is approximated using a higher-order cubature method---such that its rate is balanced with the higher-order rate exhibited by the periodically parameterized inner integral---then the statements of Theorems~\ref{lemma:ftprate} and~\ref{lemma:genericstp} hold true with the obvious substitution of higher-order convergence rates in place of the first-order rates. In particular, the dimension independence can be established. Furthermore, the sparse tensor product can recover the optimal rate up to a logarithmic factor. We demonstrate these effects in the numerical experiments of Section~\ref{sec:numex}.

\bibliographystyle{spmpsci}
\bibliography{doe}

\end{document}